\documentclass[12pt,a4paper]{article} 
\usepackage[utf8]{inputenc}
\usepackage[T1]{fontenc}
\usepackage{authblk}
\usepackage{amsmath}
\usepackage{amssymb}
\usepackage{amsthm,mathtools}
\usepackage{lineno}
\usepackage[mathscr]{eucal}
\usepackage{xcolor}
\usepackage{fontenc}
\usepackage{graphicx}
\usepackage{geometry}
\geometry{a4paper,left=25mm,right=25mm,top=20mm,}
\usepackage{lineno}
%
%
%
\theoremstyle{definition}
\newtheorem{definition}{Definition}[section]
\newtheorem{theorem}[definition]{Theorem}
\newtheorem*{theorem*}{Conjecture}
\newtheorem{proposition}[definition]{Proposition}
\newtheorem{lemma}[definition]{Lemma}
\newtheorem{corollary}[definition]{Corollary}
\theoremstyle{remark}
\newtheorem{remark}[definition]{Remark}
\newtheorem{example}[definition]{Example}
\newcounter{enumctr}

%
%
\newcommand{\R}{\mathbb{R}}
\newcommand{\N}{\mathbb{N}}

\newcommand{\C}{\mathbb{C}}

\setlength{\parindent}{0cm}
\setlength{\parskip}{2ex}
\providecommand{\keywords}[1]{\textbf{\textbf{Key words: }} #1}
\begin{document}
\title{Qualitative analysis of solutions to mixed-order positive linear coupled systems with  bounded or unbounded delays}         



\author[1,2]{Hoang The Tuan\thanks{Corresponding Author: httuan@math.ac.vn}}
\author[3]{La Van Thinh\thanks{lavanthinh@hvtc.edu.vn}}
\affil[1]{Institute of Mathematics, Vietnam Academy of Science and Technology, 18 Hoang Quoc Viet, Hanoi 10307, Viet Nam}
\affil[2]{Department of Mathematics, Great Bay University, Dongguan, Guangdong 523000, China}
\affil[3]{Academy of Finance, No. 58, Le Van Hien St., Duc Thang Wrd., Bac Tu Liem Dist., Hanoi, Viet Nam}


\maketitle
\begin{abstract}
This paper addresses the qualitative theory of mixed-order positive linear coupled systems with bounded or unbounded delays. First, we introduce a general result on the existence and uniqueness of solutions to mixed-order linear systems with time-varying delays. Next, we obtain necessary and sufficient criteria which characterize the positivity of mixed-order delay linear coupled systems. Our main contributions are in Section 5. More precisely, by using a smoothness property of solutions to fractional differential equations and developing a new appropriated comparison principle for solutions to mixed-order delay positive systems, we prove the attractivity of mixed-order non-homogeneous linear positive coupled systems under the impact of bounded or unbounded delays. We also establish a necessary and sufficient condition to ensure the stability of homogeneous systems. As a consequence of these results, we show the smallest asymptotic bound of solutions to mixed-order delay positive non-homogeneous linear coupled systems where disturbances are continuous and bounded. Finally, we provide numerical simulations to illustrate the proposed theoretical results.
\end{abstract}
\keywords{fractional differential equations, mixed-order systems with bounded or unbounded delays, positive coupled systems, existence and uniqueness, asymptotic behavior of solutions, smallest asymptotic bounded of solutions}\\
{\bf AMS subject classifications:}  34A08, 34K37, 45A05, 45M05, 45M10, 45M20
\section{Introduction}
Positive Systems are dynamical systems possessing an important property that any nonnegative input and nonnegative initial state generate a nonnegative state trajectory and output at all times. These systems appear frequently in practical applications as lots of physical quantities (such as concentrations, population levels, queue lengths, charge levels, and light intensity levels) and chemical reaction networks in which the state variables represent reactant concentrations are naturally constrained to be positive. The unifying approach of the system theory which is nowadays known as “Positive System Theory” was initiated in the '80s by David Luenberger in his fundamental book \cite{Luenberger}. From that time on, an impressive number of theoretical and applicative contributions to this field has appeared, e.g., medicine \cite{Carson}, \cite{Vargas}, pharmacology \cite{Coxson}, biology, ecology and physiology \cite{Haddad}, biomolecular \cite{Vecchio}, \cite{Moreno}, traffic and congestion modelling \cite{Blanchini}, filtering and charge routing networks \cite{Benvenuti}, econometrics \cite{Nieuwenhuis}. For more details, interested readers see the tutorial paper by A. Rantzer and M.E. Valcher \cite{Rantzer} and the references therein.

In distributed systems with the exchange of information involved, delays are inevitable. They occur due to finite velocities of signal propagation or processing delays leading to memory effects. Hence, up to now, a considerable effort has been made to study the qualitative theory and performance of delay systems. A systematic analysis of delay differential equations has been carried out by J.K. Hale and S.M. Verduyn Lunel \cite{Hale}. In this famous monograph, the authors have discussed the existence, uniqueness of solutions, and Lyapunov-type methods for proving the stability of an equilibria of dynamic systems under different types of delay effects. In addition, the existence of periodic solutions, the existence of stable manifolds, and the Hopf bifurcation of systems are also investigated. The global research of nonlinear delay equations is emphasized in \cite{Kuang}. Specifically, the author is interested in the possible influence of delays on the dynamics to the system, including topics such as the stability switching for increasing delays, the long-time coexistence of populations, and oscillatory aspects of the dynamics. Using a unitary methodology as the eigenvalue-based approach, in \cite{Michiels}, W. Michiels and S.I. Niculescu have presented an stability analysis and synthesis by delayed (state and output) feedback in the linear case. Generally, they have mainly concentrated on the two aspects: sensitivity analysis concerning delays and other systems' parameters and pole placement strategies in stabilization and optimization of the spectral abscissa function or robustness indicators.

The influence of delays on the dynamics of linear positive systems and some classes of nonlinear monotone systems have been well studied by many authors. We only mention here some of the most significant of them. Using linear Lyapunov–Krasovskii functionals, in \cite{Haddad_04}, W.M. Haddad and V. Chellaboina have developed necessary and sufficient conditions for the asymptotic stability of nonnegative linear systems with a constant delay. With the help of the semigroup theory and an eigenvalue-based approach, P.H.A. Ngoc \cite{Ngoc_09} has obtained an explicit criterion for the positivity of the solution semigroup of linear Volterra integro-differential systems with infinitely many delays. By applying co-positive linear Lyapunov functions along with the dissipativity theory, in \cite{Brat_13}, C. Briat has analyzed the stability and control of uncertain linear positive systems. Exploiting a comparison principle, in \cite{ShenLam15}, J. Shen and J. Lam have proven that the $\ell_\infty$-gain of positive linear systems with unbounded delays is independent of the magnitude of delays. 

Coupled differential–difference equations (systems composed of an ordinary differential equation coupled with a continuous time difference equation) with delays have received extensive attention in engineering applications like circuit and fluid system, see e.g., \cite{Niculescu, Rasvan_02, Rasvan_06} and and the references therein. They cover several types of classic dynamic systems, such as singular systems, systems with multiple commensurate delay, and dynamic systems with internal constraints, see e.g., \cite{Gu_06, Gu_10, Mazenc, Huang_21} and the references therein. By combining the comparison arguments and the entry-wise monotonicity of positive coupled differential-difference equations, in \cite{Shen_15}, J. Shen and W. X. Zheng have investigated the asymptotic stability of internally positive coupled differential-difference equations with time-varying delays. After that, the stability of positive coupled differential-difference systems with unbounded delays is shown in P.N. Pathirana et al. \cite{Hieu_18} and Q. Xiao et al. \cite{Huang_21} by developing new comparison techniques for solutions. Finally, we introduce an outstanding contribution published by H.R. Feyzmahdavian et al. \cite{Feyzmahdavian}. In that paper, by constructing special Lyapunov candidate functions, ingenious solution comparison techniques, and appropriated weighted norms, the authors have established the delay-independent stability of a class of nonlinear positive systems, which includes positive linear systems as a special case, and allows delays are possibly unbounded.

While, as reviewed above, the asymptotic behavior of solutions to delay positive systems is surveyed thoroughly, the theory for fractional-order positive systems is considerably less well-developed. In our opinion, in this direction, one of the most interesting works is the paper by J. Shen and J. Lam \cite{ShenLam_16} in which the authors have shown the stability and performance analysis of mixed-order positive linear systems with bounded delays. Then, in \cite{TTL_21}, H.T. Tuan, H. Trinh, and J. Lam obtained a criterion to classify the stability of mixed-order linear systems where time-varying delays are bounded or unbounded. The approach in the two papers is the comparison principle of positive systems with various time-varying delays. Based on delayed Mittag–Leffler type matrix functions, in \cite{Mahmudov}, I.T. Huseynov and N.I. Mahmudov have investigated the existence, uniqueness and the Ulam–Hyers stability for fractional-order neutral differential equations with a constant delay. In light of vector Lyapunov type functions and comparison arguments, K.L. Bichitra, and N.B. Swaroop \cite{BS_22}, J.A. Gallegos et al. \cite{GCM_20} have described the asymptotic behavior of solutions to some multi-order nonlinear systems under the influence of multiple time-varying delays, J. Jia et al. \cite{Jia} have considered the global stabilization of fractional-order memristor-based neural networks with incommensurate orders and multiple time-varying delays. Recently, based on the  the separation of solutions of scalar fractional differential equations, R.A.C. Ferreira \cite{Ferreira} has shown the positivity of solutions of scalar linear fractional differential equations and used this result to deduce necessary
optimality conditions for a fractional variational problem of Herglotz type.

In the present paper, we focus on mixed-order linear coupled systems with bounded or unbounded delays
\begin{equation} \label{E0}
\begin{cases}
^{\!C}D^{\hat{\alpha}}_{0+}x(t)&=Ax(t)+Bx(t-\tau_1(t))+Ey(t-\tau_2(t))+f(t),\; t\in (0,\infty),\\
y(t)&=Cx(t)+Dy(t-\tau_3(t))+g(t),\; t\in [0,\infty),\\
x(\cdot)&=\psi(\cdot),\;y(\cdot)=\varphi(\cdot)\;\text{on}\; [-r,0],
\end{cases}
\end{equation}
here $\hat\alpha = (\alpha_1,\dots,\alpha_d) \in (0,1]\times\dots\times (0,1]$, $ ^C D^{\hat\alpha}_{0^+} x(t) := \left(^C D^{\alpha_1}_{0^+}x_1(t),\dots,^C D^{\alpha_d}_{0^+}x_d(t) \right)^{\rm T}$ ($^C D^{\alpha_i}_{0^+}x_i(t)$ is the Caputo fractional derivative of the order $\alpha_i$ which is defined later), $A,B\in \R^{d\times d}$, $E\in \R^{d\times n}$, $C\in \R^{n\times d}$, $D\in \R^{n\times n}$, $f:[0,\infty)\rightarrow \R^d$, $g:[0,\infty)\rightarrow \R^n$, $\tau_k:[0,\infty)\rightarrow \R_{\geq 0}$ $(k=1,2,3)$, $\psi:[-r,0]\rightarrow \R^d,$ and $\varphi:[-r,0]\rightarrow \R^n$ are continuous. It is a continuation of our recent researches on the qualitative theory of fractional-order dynamical systems, see, e.g., \cite{TuanC_20, Tuan_SIAM, TTL_21}. Our main contribution in this paper (Section \ref{s5}) is to propose a criterion on the asymptotic behavior of the system \eqref{E0} in the case its positivity is satisfied. Due to the complexity of the research topic (the system contains many different fractional order derivatives and non-locality of solutions), we especially note that it is not clear how to directly apply the techniques used to analyze positive systems (with or without delay) as eigenvalue-based methods, Lyapunov-type candidate function methods for the considered problem. To achieve the desired goal, we have developed a new approach based on a specific feature of fractional-order differentiable functions and clever solution comparison techniques suitable for fractional-order positive systems.

We organize the article as follows. In the section \ref{s2}, we list some preparation results needed for further analysis in the subsequent sections. The section \ref{s3} deals with the existence and uniqueness of solutions to mixed-order coupled systems with time-varying delays. Positivity of the system \eqref{E0} is discussed in the section \ref{s4}. Asymptotic behavior of solutions to mixed-order positive linear coupled systems when delays are bounded or unbounded is indicated in the section \ref{s5}. At the end of this section, we provide some numerical examples to illustrate the effectiveness of the proposed main theoretical results.

To conclude the introduction, we present some notations that will be used throughout the rest of the paper. $\N$ is the set of natural numbers, $\R$, $\R_{\geq 0}$, $\R_{+}$ is the set of real numbers, nonnegative real numbers, and positive real numbers, respectively. $\R^d$ is the $d$-dimensional Euclidean space endowed with a norm $\|\cdot\|$, $\R^d_{\geq 0}:=\{x=(x_1,\dots,x_d)^{\rm T}\in\R^d:x_i\geq 0,\;i=1,\dots,d\}$, $\R^d_{+}:=\{x=(x_1,\dots,x_d)^{\rm T}\in\R^d:x_i> 0,\;i=1,\dots,d\}$ and ${\bf 1}_d\in \R^d$ is a vector with entries equal to 1. For $m,n\in\N$, $\R^{m\times n}$ is the set of $m\times n$ real matrices. Assume that $X$ is a Banach space equipped a norm $\|\cdot\|_X$, set $B_X(0;r):=\{x\in X:\|x\|_X\leq r\}$. For any $[a,b]\subset \R$, denote $C([a,b];X)$ the space of continuous functions $\xi:[a,b]\rightarrow X$ with the sup norm $\|\cdot\|_X$, i.e.,
\[
\|\xi\|_X:=\sup_{a\le t\le b}\|\xi(t)\|,\quad\forall \xi\in C([a,b];X).
\]
Let a matrix $A=(a_{ij})_{1\leq i,j\leq d}\in \R^{d\times d}$. We denote the spectrum of $A$ by $\sigma{(A)}$, that is $\sigma(A):=\{\lambda\in\mathbb C: {\text{det}}(A-\lambda I_d)=0\}$, where $I_d$ is the $d\times d$ identity matrix. The matrix $A$ is called Metzler if $a_{ij}\geq 0$ for all $1\leq i\ne j\leq d$. It is said to be Hurwitz if $\sigma{(A)}$ satisfies $$\sigma{(A)}\subset\{\lambda\in\C:\Re{\lambda}<0\}.$$
The inverse of $A$ is $A^{-1}$ only when $AA^{-1} = A^{-1}A = I_d$. Define $\rho(A):=\max\{|\lambda|:\lambda\in \sigma(A)\}$. The matrix $A$ is Schur if $\rho(A)<1$. Let $n,m\in \N$ and $A=(a_{ij})_{1\leq i\leq n}^{1\leq j\leq m},B=(b_{ij})_{1\leq i\leq n}^{1\leq j\leq m}\in \R^{n\times m}$. We write $A\succ B$ ($A\succeq B$) if $a_{ij}>b_{ij}$ ($a_{ij}\geq b_{ij}$, respectively) for all $1\leq i\leq n,\;1\leq j\leq m$. The matrix $A$ is nonnegative if $a_{ij}\geq 0$ for all $1\leq i\leq n$, $1\leq j\leq m$. For $\alpha \in (0,1],$ we define the Riemann-Liouville fractional integral of a function $x :[0,T] \rightarrow \mathbb R$ as 
$$ I^\alpha_{0^+}x(t) := \frac{1}{\Gamma(\alpha)}\int_{0}^{t}(t-s)^{\alpha -1}x(s)ds,\;t\in [0,T],$$
and its Caputo fractional derivative of the order $\alpha\in (0,1]$ as 
$$ ^C D^\alpha_{0^+}x(t) := \frac{d}{dt}I^{1-\alpha}_{0^+}(x(t)-x(0)), \;t\in (0,T],$$
where $\Gamma(\cdot)$ is the Gamma function and $\frac{d}{dt}$ is the usual derivative.
Let $\hat\alpha = (\alpha_1,\dots,\alpha_d) \in (0,1]\times\dots\times (0,1]$ be a multi-index and $x = (x_1,\dots, x_d)^{\rm T}$ with $x_i :[0,T] \rightarrow \mathbb R$, $i=1,\dots,d,$ is a vector valued function, then
$$ ^C D^{\hat\alpha}_{0^+} x(t) := \left(^C D^{\alpha_1}_{0^+}x_1(t),\dots,^C D^{\alpha_d}_{0^+}x_d(t) \right)^{\rm T}.$$
On the Caputo fractional derivative, see e.g., \cite[Chapter III]{Kai} and \cite{Vainikko_16} for more details.
\section{Preliminary}\label{s2}
We introduce some preparation results needed for further analysis in the next parts.
\begin{lemma}
\label{L1} Let $A\in \R^{d\times d}$ and suppose that it is Metzler. Then, the matrix $A$ is Hurwitz if and only if one of the following two conditions is verified.
\begin{itemize}
\item[(i)] There exists a vector $\lambda \succ 0$ such that $A\lambda \prec 0.$
\item[(ii)] The matrix $A$ is invertible and $-A^{-1}\succeq 0.$
\end{itemize}
\end{lemma}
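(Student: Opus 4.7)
The plan is to reduce both characterizations to Perron--Frobenius theory for nonnegative matrices via the standard Metzler-to-nonnegative shift. First, I would fix $s > 0$ larger than $\max_i |a_{ii}|$ and write $A = N - s I_d$ with $N := A + s I_d \succeq 0$. Because $\sigma(A) = \sigma(N) - s$, and for the nonnegative matrix $N$ the Perron eigenvalue coincides with $\rho(N)$ and attains the maximum real part among eigenvalues, the matrix $A$ is Hurwitz if and only if $\rho(N) < s$. This reformulation is the pivot for the rest of the argument.

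Second, I would establish the equivalence Hurwitz $\Leftrightarrow$ (ii). If $\rho(N) < s$, then $\rho(N/s) < 1$, so the Neumann series $(I_d - N/s)^{-1} = \sum_{k \geq 0} (N/s)^k$ converges entrywise and is nonnegative; consequently $-A^{-1} = s^{-1} (I_d - N/s)^{-1} \succeq 0$. For the converse, assume $-A^{-1}$ exists and is nonnegative, and let $v \succeq 0$, $v \neq 0$, be a Perron eigenvector of $N$ associated with $\rho(N)$. From $(s I_d - N) v = (s - \rho(N)) v$ I would rule out $s \le \rho(N)$: equality makes $s I_d - N = -A$ singular, contradicting invertibility, while $s < \rho(N)$ would yield $v = (s - \rho(N))(-A^{-1}) v$, whose right-hand side is $\preceq 0$, forcing $v = 0$, again a contradiction. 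Hence $\rho(N) < s$.

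Third, I would handle (i) $\Leftrightarrow$ Hurwitz. For $(\mathrm{i}) \Rightarrow$ Hurwitz, I would rewrite $A \lambda \prec 0$ as $N \lambda \prec s \lambda$, pair with a nonnegative left Perron eigenvector $u$ of $N$ satisfying $u^{\rm T} N = \rho(N)\, u^{\rm T}$, and use $u^{\rm T} \lambda > 0$ (since $\lambda \succ 0$ and $u \succeq 0$, $u \neq 0$) to extract $\rho(N)\, u^{\rm T} \lambda < s\, u^{\rm T} \lambda$, hence $\rho(N) < s$. For the reverse direction, once (ii) is in hand I would set $\lambda := -A^{-1} \mathbf{1}_d$, which gives $\lambda \succeq 0$ and $A\lambda = -\mathbf{1}_d \prec 0$; to upgrade $\succeq$ to $\succ$, I would suppose $\lambda_i = 0$ for some $i$ and note that the Metzler property forces $(A\lambda)_i = \sum_{j \neq i} a_{ij} \lambda_j \geq 0$, contradicting $(A\lambda)_i = -1$.

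The main obstacle I expect is the bridging step in the second paragraph: converting entrywise nonnegativity of $-A^{-1}$ into the spectral bound $\rho(N) < s$. The delicate case is $s = \rho(N)$, which must be excluded via the singularity of $s I_d - N$ at a Perron eigenvalue; the strict inequality case then follows by a sign argument on $v = (s - \rho(N))(-A^{-1})v$. The remaining pieces are routine manipulations with the Neumann series and a Collatz--Wielandt-type pairing with a left Perron eigenvector.
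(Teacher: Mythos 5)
Your argument is correct in all directions. The paper itself does not prove Lemma~\ref{L1}: it simply cites \cite[Proposition 2]{Rantzer}, so there is no internal proof to match against. What you supply is a complete, self-contained derivation via the classical route: the shift $A=N-sI_d$ with $N=A+sI_d\succeq 0$ reduces everything to Perron--Frobenius theory for nonnegative matrices, the equivalence ``Hurwitz $\Leftrightarrow \rho(N)<s$'' is the correct pivot, the Neumann series gives Hurwitz $\Rightarrow$ (ii), the sign argument on $v=(s-\rho(N))(-A^{-1})v$ together with the exclusion of $s=\rho(N)$ by singularity gives (ii) $\Rightarrow$ Hurwitz, the Collatz--Wielandt pairing with a left Perron eigenvector gives (i) $\Rightarrow$ Hurwitz, and $\lambda:=-A^{-1}{\bf 1}_d$ closes the cycle, with the Metzler property correctly upgrading $\lambda\succeq 0$ to $\lambda\succ 0$. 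Two minor points worth making explicit if you write this up: you are invoking the Perron--Frobenius theorem for general (possibly reducible) nonnegative matrices, which guarantees only a nonnegative, nonzero eigenvector for $\rho(N)$ --- your argument never needs strict positivity of $u$ or $v$, so this is fine, but it should be stated; and in the pairing step the strict inequality $u^{\rm T}N\lambda<s\,u^{\rm T}\lambda$ uses that some coordinate of $u$ is positive together with the strict componentwise inequality $N\lambda\prec s\lambda$, which you implicitly use and which holds. This is essentially the standard proof of the Metzler/M-matrix characterization found in the positive-systems literature, so your proof and the cited reference ultimately rest on the same ideas; what your write-up buys is that the lemma becomes self-contained rather than outsourced.
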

\begin{proof}
See, e.g., \cite[Proposition 2]{Rantzer}
\end{proof}
\begin{lemma}
\label{L2} Let $M\in \R^{d\times d}$ be nonnegative. Then, the following statements are equivalent.
\begin{itemize}
\item[(i)] $M$ is a Schur matrix.
\item[(ii)] There is a vector $\eta \succ 0$ such that $(M-I_d)\eta \prec 0.$
\item[(iii)] The matrix $I_d-M$ is invertible and $(I_d-M)^{-1}\succeq 0.$
\end{itemize}
\end{lemma}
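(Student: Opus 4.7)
The natural strategy is to reduce Lemma \ref{L2} to Lemma \ref{L1} by setting $A := M - I_d$. Since $M \succeq 0$, every off-diagonal entry of $A$ coincides with the corresponding entry of $M$ and is therefore nonnegative, so $A$ is Metzler. Once the equivalence ``$M$ Schur $\Leftrightarrow$ $A$ Hurwitz'' is in place, the three conditions of Lemma \ref{L2} are direct translations of the three conditions of Lemma \ref{L1}: condition (i) becomes ``$A$ Hurwitz'', condition (ii) is literally ``there exists $\eta \succ 0$ with $A\eta \prec 0$'', and condition (iii) follows from the identity $-A^{-1} = (I_d - M)^{-1}$ together with the observation that $A$ is invertible iff $I_d - M$ is.

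The one nontrivial step is showing that, for nonnegative $M$, Schurness of $M$ is equivalent to Hurwitzness of $M - I_d$. Since $\sigma(M - I_d) = \{\lambda - 1 : \lambda \in \sigma(M)\}$, Hurwitzness amounts to $\Re(\lambda) < 1$ for every $\lambda \in \sigma(M)$, while Schurness is the formally stronger condition $|\lambda| < 1$. I would close this gap by invoking the Perron--Frobenius theorem: for a nonnegative matrix the spectral radius is itself an eigenvalue, which is real and nonnegative, so
\[
\rho(M) = \max_{\lambda \in \sigma(M)} \Re(\lambda).
\]
Hence the two conditions coincide, and this is precisely the place where the nonnegativity hypothesis on $M$ enters in an essential way (for general matrices the two notions differ).

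With this identification established, applying Lemma \ref{L1} to the Metzler matrix $A = M - I_d$ immediately yields the full chain (i) $\Leftrightarrow$ (ii) $\Leftrightarrow$ (iii). I expect the Perron--Frobenius step to be the only substantive point of the argument; after it, everything is bookkeeping. If one prefers to avoid Perron--Frobenius explicitly, the same equivalence between Schurness of a nonnegative matrix and Hurwitzness of $M - I_d$ can be quoted from a standard reference on nonnegative matrices (e.g.\ Berman--Plemmons), after which the remainder of the proof is just a substitution.
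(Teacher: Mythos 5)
Your argument is correct. Note, however, that the paper does not actually prove Lemma \ref{L2}: it simply cites \cite[Proposition 1]{Rantzer} (just as Lemma \ref{L1} is cited from \cite[Proposition 2]{Rantzer}), so there is no in-paper proof to compare against. What you supply is a genuine self-contained derivation of Lemma \ref{L2} from Lemma \ref{L1}, and it holds up: $A:=M-I_d$ is Metzler because the off-diagonal entries of $A$ are those of $M\succeq 0$; conditions (ii) and (iii) translate verbatim under $-A^{-1}=(I_d-M)^{-1}$; and the one substantive point --- that for nonnegative $M$ the condition $\rho(M)<1$ is equivalent to $\Re\lambda<1$ for all $\lambda\in\sigma(M)$ --- is correctly settled by Perron--Frobenius, since the spectral radius of a nonnegative matrix is itself a (real, nonnegative) eigenvalue, so $\rho(M)=\max_{\lambda\in\sigma(M)}\Re\lambda$. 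This is exactly where nonnegativity is needed, as you observe. The trade-off is that your route makes Lemma \ref{L2} a corollary of Lemma \ref{L1} plus Perron--Frobenius rather than an independent fact, whereas the cited reference treats the two propositions on an equal footing; but as a proof it is complete and arguably more informative than the bare citation the paper gives.
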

\begin{proof}
See, e.g., \cite[Proposition 1]{Rantzer}.
\end{proof}
\begin{lemma} \label{L3} Let $\alpha \in (0,1]$ and $c \in \R$. Then, for all $t>0$, we have $$\frac{d}{dt}E_\alpha(c t^\alpha)=c t^{\alpha-1}E_{\alpha,\alpha}(c t^\alpha),$$
where $E_{\alpha,\beta}:\R\rightarrow \R$ is the classical Mittag-Leffler function defined by $E_{\alpha,\beta} (x):=\sum_{k=0}^\infty \frac{x^k}{\Gamma(\alpha k+\beta)}$ for all $x\in\R$, and $E_\alpha(\cdot):=E_{\alpha,1}(\cdot)$.
\end{lemma}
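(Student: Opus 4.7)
The plan is to prove this identity by a direct term-by-term differentiation of the power series defining the Mittag-Leffler function. Since
\[
E_\alpha(c t^\alpha) = \sum_{k=0}^{\infty} \frac{c^k t^{\alpha k}}{\Gamma(\alpha k + 1)},
\]
the natural first step is to check that this series converges uniformly on every compact subset of $[0,\infty)$ together with its formal derivative, so that differentiation under the summation sign is legitimate. This is immediate: the Mittag-Leffler entire function has infinite radius of convergence, and by the ratio test the formally differentiated series inherits this property, so uniform convergence on compact subsets of $(0,\infty)$ follows.

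Once term-by-term differentiation is justified, I would compute
\[
\frac{d}{dt} E_\alpha(c t^\alpha) = \sum_{k=1}^{\infty} \frac{c^k \alpha k\, t^{\alpha k - 1}}{\Gamma(\alpha k + 1)},
\]
using $\Gamma(\alpha k + 1) = \alpha k\, \Gamma(\alpha k)$ (valid for $k \geq 1$) to cancel the factor $\alpha k$ and obtain
\[
\frac{d}{dt} E_\alpha(c t^\alpha) = \sum_{k=1}^{\infty} \frac{c^k t^{\alpha k - 1}}{\Gamma(\alpha k)}.
\]
Then I would re-index with $j = k-1$ and factor out $c t^{\alpha-1}$ to recognize the resulting series as $E_{\alpha,\alpha}(c t^\alpha)$, yielding the claimed identity.

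The only non-computational ingredient is the justification of termwise differentiation, and even that is routine because $E_\alpha$ is entire. There is no real obstacle here; the lemma is essentially the standard derivative formula for the one-parameter Mittag-Leffler function composed with $t \mapsto c t^\alpha$, handled by the chain rule at the level of power series. The reason to state it explicitly is that it will be invoked later as a tool in the comparison and attractivity arguments of Section~\ref{s5}, where the function $t \mapsto E_\alpha(c t^\alpha)$ with $c<0$ plays the role of a model solution of a scalar fractional equation.
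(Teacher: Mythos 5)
Your computation is correct: termwise differentiation, the identity $\Gamma(\alpha k+1)=\alpha k\,\Gamma(\alpha k)$, and the re-indexing $j=k-1$ give exactly $c\,t^{\alpha-1}E_{\alpha,\alpha}(ct^\alpha)$, and the justification of differentiating under the sum (the series in $u=ct^\alpha$ defines an entire function, so the differentiated series converges locally uniformly) is adequate. The paper simply declares this lemma obvious and omits any proof, so your argument is precisely the standard computation the authors have in mind; there is nothing to compare beyond noting that you have supplied the details they skipped.
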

\begin{proof}
The proof is obvious, and thus we omit it.
\end{proof}
\begin{lemma} \label{L4} Let $\alpha \in (0,1]$ and $\lambda \in \R$. Suppose that $x(\cdot)\in C([0,\infty);\R)\cap C^1((0,\infty);\R)$ and the limit $\lim_{t\to 0}t^{\beta}\dot{x}(t)$ exists for some $\beta\in [0,1)$. Then,
$$\frac{d}{dt}\int_0^t (t-s)^{\alpha-1}E_{\alpha,\alpha}(\lambda(t-s)^{\alpha})x(s)ds=t^{\alpha-1}E_{\alpha,\alpha}(\lambda t^{\alpha})x(0)+\int_0^t (t-s)^{\alpha-1}E_{\alpha,\alpha}(\lambda(t-s)^{\alpha})\dot x(s)ds$$
for any $t>0$.
\end{lemma}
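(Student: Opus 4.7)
My plan is to recognize the left-hand side as a convolution $I(t) := \int_0^t F(t-s)\, x(s)\,ds$, with $F(w) := w^{\alpha-1} E_{\alpha,\alpha}(\lambda w^\alpha)$, and to exploit the decomposition $x(s) = x(0) + \int_0^s \dot x(r)\,dr$. This representation is legitimate on $[0,t]$ because the hypothesis $\lim_{s\to 0^+} s^\beta \dot x(s) \in \R$ with $\beta \in [0,1)$ forces $|\dot x(s)| \le C s^{-\beta}$ locally, hence $\dot x \in L^1_{\mathrm{loc}}([0,\infty))$ and $x$ is absolutely continuous on $[0,t]$.

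Substituting the decomposition and swapping the order of integration via Fubini (permissible because $|F(u)| \le K u^{\alpha-1}$ near $0$ and $|\dot x(r)| \le C r^{-\beta}$ are both integrable on bounded intervals) yields
$$I(t) \;=\; x(0)\, G(t) \;+\; \int_0^t G(t-r)\, \dot x(r)\,dr, \qquad G(w) := \int_0^w F(u)\,du.$$
By Lemma \ref{L3}, $G$ is continuous on $[0,\infty)$ with $G(0) = 0$ and $G'(w) = F(w)$ on $(0,\infty)$. Differentiating the two summands separately: $\frac{d}{dt}[x(0)\, G(t)] = x(0) F(t) = t^{\alpha-1} E_{\alpha,\alpha}(\lambda t^\alpha)\, x(0)$, and the Leibniz rule applied to $\int_0^t G(t-r)\, \dot x(r)\,dr$ produces $\int_0^t F(t-r)\, \dot x(r)\,dr$ with the boundary contribution $G(0)\,\dot x(t) = 0$ vanishing. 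Renaming $r \mapsto s$ then recovers the claimed identity.

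The technical heart of the proof is justifying the Leibniz differentiation for $H(t) := \int_0^t G(t-r)\, \dot x(r)\,dr$. I would split the difference quotient $[H(t+h) - H(t)]/h$ into a boundary piece over $[t, t+h]$ and an interior piece over $[0, t]$. The boundary piece is $O(h^\alpha) \to 0$ since $|G(w)| = O(w^\alpha)$ near $0$. For the interior piece I would apply dominated convergence with the $h$-uniform bound
$$\left|\frac{G(t+h-r) - G(t-r)}{h}\right| \;\le\; K(t-r)^{\alpha-1},$$
obtained by writing the increment as $h^{-1} \int_{t-r}^{t+h-r} F(w)\,dw$ and combining $|F(w)| \le K w^{\alpha-1}$ with the concavity of $w \mapsto w^\alpha$. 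Together with $|\dot x(r)| \le C r^{-\beta}$, this produces a Beta-type dominating function $KC\, (t-r)^{\alpha-1} r^{-\beta}$ lying in $L^1([0,t])$ precisely because $\alpha > 0$ and $\beta < 1$. The main obstacle I anticipate is producing this $h$-uniform majorant for a difference quotient whose pointwise limit $F(t-r)$ is itself unbounded as $r \to t$; the resolution is to avoid any pointwise bound on $F$ and instead exploit the integral-mean form of the increment of $G$. The case $h < 0$ is handled by a symmetric decomposition.
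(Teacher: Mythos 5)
Your argument is correct, and it arrives at essentially the same intermediate representation as the paper's proof, but by a genuinely different and more self-contained route. The paper first integrates by parts against $d\bigl[E_{\alpha}(\lambda(t-s)^{\alpha})\bigr]$ (via Lemma \ref{L3}) to get $y(t)=\frac{-x(t)+x(0)E_{\alpha}(\lambda t^{\alpha})}{\lambda}+\frac{1}{\lambda}\int_0^t E_{\alpha}(\lambda(t-s)^{\alpha})\dot x(s)\,ds$, and then differentiates under the integral sign by citing Podlubny's formula (2.210). Your identity $I(t)=x(0)G(t)+\int_0^t G(t-r)\dot x(r)\,dr$ with $G=\int_0^{\cdot}F$ is the same thing in disguise (for $\lambda\neq0$ one has $G(w)=\bigl(E_{\alpha}(\lambda w^{\alpha})-1\bigr)/\lambda$), but you reach it through the fundamental theorem of calculus plus Fubini rather than integration by parts, and you then justify the differentiation by hand. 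This buys two things: your computation never divides by $\lambda$, so it covers $\lambda=0$ uniformly (the paper's manipulation tacitly assumes $\lambda\neq0$), and it makes explicit exactly where the hypotheses $\alpha>0$ and $\beta<1$ enter, via the Beta-type majorant $(t-r)^{\alpha-1}r^{-\beta}$, instead of appealing to a textbook differentiation rule whose applicability to a weakly singular integrand multiplied by a $\dot x$ that blows up at $0$ is itself not entirely immediate. One small caveat: your $h$-uniform bound $\bigl|G(t+h-r)-G(t-r)\bigr|/|h|\le K(t-r)^{\alpha-1}$ is correct only for $h>0$, because the mean of the decreasing function $w^{\alpha-1}$ over an interval is controlled by its value at the \emph{left} endpoint; for $h<0$ the same reasoning yields $K(t+h-r)^{\alpha-1}$, which is not uniform in $h$ near $r=t+h$, so the left difference quotient is not handled by a literally symmetric application of dominated convergence. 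It is repaired by a routine extra splitting of $[0,t+h]$ into $[0,t-\delta]$ (where the kernel increment is uniformly bounded) and $[t-\delta,t+h]$ (whose contribution is $O(\delta^{\alpha})$ uniformly in $h$), so this does not affect the correctness of your proof, but the phrase ``symmetric decomposition'' should be replaced by this argument.
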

\begin{proof} By using Lemma \ref{L3} and the formula for integration by parts, for $t>0$, we obtain
\begin{align*}
y(t):&=\int_0^t (t-s)^{\alpha-1}E_{\alpha,\alpha}(\lambda(t-s)^{\alpha})x(s)ds
=-\frac{1}{\lambda}\int_0^tx(s)d[E_{\alpha}(\lambda(t-s)^{\alpha})]\\
&=-\frac{1}{\lambda}[x(s)E_{\alpha}(\lambda(t-s)^{\alpha})]|_0^t+\frac{1}{\lambda}\int_0^tE_{\alpha}(\lambda(t-s)^{\alpha})\dot x(s)ds\\
&=\frac{-x(t)+x(0)E_{\alpha}(\lambda t^\alpha)}{\lambda} + \frac{1}{\lambda}\int_0^tE_{\alpha}(\lambda(t-s)^{\alpha})\dot x(s)ds.
\end{align*}
Based on the well-known rule for the differentiation of an integral depending on a parameter with the upper limit depending on the same parameter (see, e.g., \cite[Formula (2.210), p. 98]{Podlubny}), then
\begin{align*}
\dot y(t)&=-\frac{\dot x(t)}{\lambda} + t^{\alpha-1}E_{\alpha,\alpha}(\lambda t^{\alpha})x(0)
+\frac{1}{\lambda}\int_0^t\frac{\partial E_{\alpha}(\lambda(t-s)^{\alpha})}{\partial t}\dot x(s)ds \\
&\hspace{2cm}+\frac{1}{\lambda} \lim_{s\to t^-}E_{\alpha}(\lambda(t-s)^{\alpha})\dot x(s)\\
&=t^{\alpha-1}E_{\alpha,\alpha}(\lambda t^{\alpha})x(0)+\int_0^t (t-s)^{\alpha-1}E_{\alpha,\alpha}(\lambda(t-s)^{\alpha})\dot x(s)ds,\;\forall t>0.
\end{align*}
The proof is complete.
\end{proof}
\section{Existence and uniqueness of solutions to mixed-order coupled systems with time-varying delays}\label{s3}
Let $\hat{\alpha}=(\alpha_1,...,\alpha_d) \in (0,1]\times...\times(0,1] \subset \R^d$, $A,B\in \R^{d\times d}$, $E\in \R^{d\times n}$, $C\in \R^{n\times d}$, $D\in \R^{n\times n}$, $f:[0,T]\rightarrow \R^d$, $g:[0,T]\rightarrow \R^n$ are continuous functions and $\tau_k:[0,T]\rightarrow \R_{\geq 0}$ $(k=1,2,3)$ is continuous and satisfies the following condition
\begin{itemize}
\item[(T1)] $t-\tau_k(t)\ge-r,\quad \forall t\in [0,T]$,
\end{itemize}
where $n,d\in \N$ and $r,T>0$ are arbitrary but fixed. In this section, we consider the following system
\begin{equation} \label{E1}
\begin{cases}
^{\!C}D^{\hat{\alpha}}_{0+}x(t)&=Ax(t)+Bx(t-\tau_1(t))+Ey(t-\tau_2(t))+f(t),\; t\in (0,T]\\
y(t)&=Cx(t)+Dy(t-\tau_3(t))+g(t),\; t\in [0,T]
\end{cases}
\end{equation}
with the initial conditions 
\begin{equation}\label{dkd}
\begin{cases}
x(\cdot)&=\psi(\cdot)\\
y(\cdot)&=\varphi(\cdot)
\end{cases}
\end{equation} 
on $[-r,0]$ in which $\psi:[-r,0]\rightarrow \R^d,\varphi:[-r,0]\rightarrow \R^n$ are continuous such that the compatibility condition is subjected

$\textup{(K)}\; C\psi(0)+D\varphi(-\tau_3(0))+g(0)=\varphi(0)$.
\begin{definition}
	A continuous vector valued function $\Phi(\cdot;\psi,\varphi):=(x(\cdot;\psi,\varphi), y(\cdot;\psi,\varphi))^{\rm T}: [-r,T]\rightarrow \R^{d+n}$ is called a solution to the system \eqref{E1}-\eqref{dkd} if it statisfies the equation \eqref{E1} and the initial condition \eqref{dkd} on $[-r,0]$.
	\end{definition}
Our first contribution in this paper is showing an existence and uniqueness theorem for the initial value problem \eqref{E1}-\eqref{dkd}.
\begin{theorem} \label{EU}
Suppose that $\|C\|_\infty,\|D\|_\infty<1$, that is,
	\begin{align}
		&\displaystyle\sum_{k=1}^d|c_{ik}|<1,\quad\forall i=\overline{1,n},\label{c1}\\
		&\displaystyle\sum_{j=1}^n|d_{ij}|<1,\quad \forall i=\overline{1,n}\label{d1}.
	\end{align} 
	Then, the system \eqref{E1}-\eqref{dkd} has a unique solution $\Phi(\cdot;\psi,\varphi)$ on $[-r,T]$, where $$\Phi(t;\psi,\varphi)=\left(\begin{array}{cc}
		x(t;\psi,\varphi) \\ y(t;\psi,\varphi)\end{array}\right) =\left(\begin{array}{cc}
		x_1(t;\psi,\varphi) \\ \vdots \\ x_d(t;\psi,\varphi) \\y_1(t;\psi,\varphi) \\ \vdots \\ y_n(t;\psi,\varphi)\end{array}\right) .$$
\end{theorem}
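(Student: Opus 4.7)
The plan is to decouple the system by first resolving the implicit difference equation for $y$ as a Lipschitz functional of $x$, and then reducing the whole problem to a single fractional Volterra integral equation for $x$ that is attacked via the Banach contraction principle in a Bielecki-type weighted norm.

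Step 1 (reformulation and the map $y=\mathcal{S}[x]$). Using the standard equivalence between a Caputo initial value problem and its Volterra reformulation, the $x$-equation is equivalent, for $t\in[0,T]$, to
\[
x_i(t)=\psi_i(0)+\frac{1}{\Gamma(\alpha_i)}\int_0^t(t-s)^{\alpha_i-1}\bigl[Ax(s)+Bx(s-\tau_1(s))+Ey(s-\tau_2(s))+f(s)\bigr]_i\,ds,
\]
$i=1,\dots,d$, while the $y$-equation is already in fixed-point form. For any $x\in C([-r,T];\R^d)$ with $x|_{[-r,0]}=\psi$, I define $\mathcal{T}_x:C([-r,T];\R^n)\to C([-r,T];\R^n)$ by $(\mathcal{T}_xy)(t)=\varphi(t)$ on $[-r,0]$ and $(\mathcal{T}_xy)(t)=Cx(t)+Dy(t-\tau_3(t))+g(t)$ on $[0,T]$; the compatibility condition (K) ensures continuity at $t=0$. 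Since $\|Dv\|_\infty\le\|D\|_\infty\|v\|_\infty$ with $\|D\|_\infty<1$, $\mathcal{T}_x$ is a sup-norm contraction, producing by Banach a unique fixed point $y=\mathcal{S}[x]$ extending $\varphi$. A direct subtraction of the defining equations for $\mathcal{S}[x_1]$ and $\mathcal{S}[x_2]$, together with $\mathcal{S}[x_1]-\mathcal{S}[x_2]=0$ on $[-r,0]$, yields the Lipschitz bound
\[
\sup_{0\le s\le t}\|\mathcal{S}[x_1](s)-\mathcal{S}[x_2](s)\|_\infty\le L\sup_{0\le s\le t}\|x_1(s)-x_2(s)\|_\infty,\qquad L:=\frac{\|C\|_\infty}{1-\|D\|_\infty},
\]
for every $t\in[0,T]$.

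Step 2 (contraction for $x$). Substituting $y=\mathcal{S}[x]$ into the integral equation defines an operator $\mathcal{F}$ on the complete affine set $\{x\in C([-r,T];\R^d):x|_{[-r,0]}=\psi\}$, which I equip with the Bielecki-type norm $\|x\|_\beta:=\sup_{0\le t\le T}e^{-\beta t}\|x(t)\|_\infty$ for $\beta>0$. Combining the delay inequality $\max\{s-\tau_k(s),0\}\le s$ (which allows each delayed term to be estimated by $\|x_1-x_2\|_\beta$ evaluated at time $s$), the Lipschitz bound on $\mathcal{S}$ from Step 1, and the elementary estimate $\int_0^t(t-s)^{\alpha-1}e^{\beta s}\,ds\le \Gamma(\alpha)\beta^{-\alpha}e^{\beta t}$, I obtain componentwise
\[
e^{-\beta t}\|\mathcal{F}[x_1](t)-\mathcal{F}[x_2](t)\|_\infty \le \bigl(\|A\|_\infty+\|B\|_\infty+L\|E\|_\infty\bigr)\max_{1\le i\le d}\beta^{-\alpha_i}\,\|x_1-x_2\|_\beta
\]
for all $t\in[0,T]$. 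Choosing $\beta$ large enough makes the prefactor strictly less than $1$, so $\mathcal{F}$ is a contraction; Banach's theorem then yields a unique fixed point $x$, and $(x,\mathcal{S}[x])$ is the unique solution to \eqref{E1}--\eqref{dkd}.

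The main obstacle to expect is the interplay between the non-local, implicit difference equation for $y$ and the fractional Volterra equation for $x$, compounded by the fact that no lower bound on $\tau_3$ is imposed (so $\tau_3(t)$ may vanish). The hypothesis $\|D\|_\infty<1$ is exactly what is needed to resolve $y$ uniquely as a Lipschitz functional of $x$, independently of the behavior of $\tau_3$; once this decoupling is in place, the Bielecki weight absorbs all Volterra-type smallness uniformly on $[0,T]$, side-stepping the awkward step-by-step construction that a pure sup-norm argument would require.
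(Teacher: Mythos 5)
Your proof is correct, and it takes a genuinely different route from the paper's. The paper runs a single Banach fixed-point argument for the coupled unknown $(x,y)$ on $C([-r,T];\R^{d+n})$, using the weighted norm $\|\xi\|_\gamma=\sup_{t\in[0,T]}e^{-\gamma t}\max_{\theta\in[-r,t]}\|\xi(\theta)\|$; there the $y$-component of the operator contains the undamped term $C\xi^1(t)$, whose contribution $\max_{j}\sum_{k}|c_{jk}|$ to the contraction constant is not attenuated by taking $\gamma$ large, so both \eqref{c1} and \eqref{d1} are genuinely used. You instead decouple: for each fixed $x$ the implicit difference equation is solved by a plain sup-norm contraction requiring only $\|D\|_\infty<1$, and the resulting Lipschitz constant $L=\|C\|_\infty/(1-\|D\|_\infty)$ of $x\mapsto\mathcal{S}[x]$ --- however large --- is then absorbed by the factor $\beta^{-\alpha_i}$ in the Bielecki estimate for the fractional Volterra equation. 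A pleasant by-product is that your argument never invokes \eqref{c1}: it yields existence and uniqueness under the single hypothesis $\|D\|_\infty<1$, which sharpens the theorem as stated (the paper's own remark claims that $\|C\|_\infty,\|D\|_\infty<1$ is essential for the fixed-point strategy; your nested fixed-point shows the condition on $C$ is not). The only points worth writing out in full are that $\mathcal{S}[x]$ stays in the closed set of continuous extensions of $\varphi$ thanks to the compatibility condition (K), and that the Lipschitz bound for $\mathcal{S}$ follows by estimating the running supremum $M(t)=\sup_{0\le s\le t}\|\mathcal{S}[x_1](s)-\mathcal{S}[x_2](s)\|_\infty$ via $s-\tau_3(s)\le s$ and the vanishing of the difference on $[-r,0]$; both are routine and you indicate them correctly.
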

\begin{proof}
	First, it is easy to see that the initial value problem \eqref{E1}-\eqref{dkd} is equivalent to the following delay integral system
	\begin{align*}
		x_i(t)&=\psi_i(0)+\frac{1}{\Gamma(\alpha_i)}\int_0^t (t-s)^{\alpha_i-1}f_i(s)ds\\
		&+\frac{1}{\Gamma(\alpha_i)}\int_0^t (t-s)^{\alpha_i-1}\left(\sum_{j=1}^da_{ij}x_j(s)+\sum_{j=1}^db_{ij}x_j(s-\tau_1(s))+\sum_{k=1}^ne_{ik}y_k(s-\tau_2(s))\right)ds,\\
		y_j(t)&=\sum_{k=1}^dc_{jk}x_k(t)+\sum_{k=1}^nd_{jk}y_k(t-\tau_3(t))+g_j(t),\;t\in [0,T],
	\end{align*}
	for $i=1,...,d,\ j=1,...,n$, and $x(\cdot)=\psi(\cdot),\ y(\cdot)=\varphi(\cdot)$ on $[-r,0]$. We define on $C\left( [-r, T]; \mathbb R^{d+n} \right )$ a weighted norm as below
$$  ||\xi||_{\gamma} := \sup_{t \in [0, T]} \frac{\xi^*(t)}{e^{\gamma t}}, $$
where $\xi^*(t) = \displaystyle \max_{\theta \in [-r,t]} ||\xi(\theta)||$, $\|\cdot\|$ is the max norm on $\mathbb R^{d+n}$ and $\gamma$ is a large positive constant chosen later.
	
	Put $\Phi_0=(\psi,\varphi)^{\rm T}$ on $[-r,0]$ and $X=C\left( [-r, T]; \mathbb R^{d + n} \right )$.
We now establish a Lyapunov--Perron type operator $ \mathcal T_{\Phi_0} : X \rightarrow X$ associated with the system \eqref{E1}-\eqref{dkd} as follows. 

For any $\xi=(\xi^1,\xi^2)^{\rm T}\in \R^{d+n},$
	$$\mathcal{T}_{\Phi_0}\xi(t):=\left(\begin{array}{cc}
		(\mathcal{T}_{\Phi_0}^1\xi)(t) \\ (\mathcal{T}_{\Phi_0}^2\xi)(t)\end{array}\right),$$
	where
	\begin{align*}
		(\mathcal{T}_{\Phi_0}^1\xi )_i(t)&=\psi_i(0)+\frac{1}{\Gamma(\alpha_i)}\int_0^t (t-s)^{\alpha_i-1}\big(\sum_{j=1}^da_{ij}\xi^1_j(s)+\sum_{j=1}^db_{ij}\xi^1_j(s-\tau_1(s)))ds\\
		&\hspace{1,5cm}+\frac{1}{\Gamma(\alpha_i)}\int_0^t (t-s)^{\alpha_i-1}\big(\sum_{k=1}^ne_{ik}\xi^2_k(s-\tau_2(s))+f_i(s))ds,\\
		(\mathcal{T}_{\Phi_0}^2\xi)_j(t)&=\sum_{k=1}^dc_{jk}\xi^1_k(t)+\sum_{k=1}^nd_{jk}\xi^2_k(t-\tau_3(t))+g_j(t),
	\end{align*}
	for $t\in (0,T]$, $1\leq i\leq d,\ 1\leq j\leq n$ and $(\mathcal{T}_{\Phi_0}^1\xi)(\cdot)=\psi(\cdot),\;(\mathcal{T}_{\Phi_0}^2\xi )(\cdot)=\varphi(\cdot)$ on $[-r,0]$. 
	For any $\xi=(\xi^1,\xi^2)^{\rm T},\;\tilde{\xi}=({\tilde\xi}^1,{\tilde\xi}^2)^{\rm T}\in X$, $1\leq i\leq d$, we see on $[0,T]$ that
	\begin{align*}
		&\left|(\mathcal{T}_{\Phi_0}^1\xi)_i(t)-(\mathcal{T}_{\Phi_0}^1\tilde\xi)_i(t)\right|\\
		&\le \left|\frac{1}{\Gamma(\alpha_i)}\int_0^t (t-s)^{\alpha_i-1}\big(\sum_{j=1}^da_{ij}(\xi^1_j(s)-{\tilde\xi}^1_j(s))+\sum_{j=1}^db_{ij}(\xi^1_j(s-\tau_1(s))-{\tilde\xi}^1_j(s-\tau_1(s))))ds\right| \\
		&\hspace{1.5cm}+\frac{1}{\Gamma(\alpha_i)}\int_0^t (t-s)^{\alpha_i-1}\left|\sum_{k=1}^ne_{ik}(\xi^2_k(s-\tau_2(s))-{\tilde\xi}^2_k(s-\tau_2(s)))\right|ds\\
		&\leq\frac{1}{\Gamma(\alpha_i)}\int_0^t (t-s)^{\alpha_i-1}\left(\sum_{j=1}^d|a_{ij}||\xi^1_j(s)-{\tilde\xi}^1_j(s)|+\sum_{j=1}^d|b_{ij}||\xi^1_j(s-\tau_1(s))-{\tilde\xi}^1_j(s-\tau_1(s))|\right)ds\\
		&\hspace{1.5cm}+\frac{1}{\Gamma(\alpha_i)}\int_0^t (t-s)^{\alpha_i-1}\sum_{k=1}^n|e_{ik}||\xi^2_k(s-\tau_2(s))-{\tilde\xi}^2_k(s-\tau_2(s))|ds\\
		&\leq \frac{\displaystyle \max_{1\le i\le d}\displaystyle \sum_{j=1}^d(|a_{ij}|+|b_{ij}|)\exp{(\gamma t)}}{\Gamma(\alpha_i)}
		\int_0^t (t-s)^{\alpha_i-1}\exp{(-\gamma(t- s))}\frac{{(\xi-{\tilde\xi})^1}^*(s)}{\exp{(\gamma s)}}ds\\ 
		&\hspace{1.5cm}+\frac{\displaystyle \max_{1\le i\le d}\displaystyle \sum_{k=1}^n|e_{ik}|\exp{(\gamma t)}}{\Gamma(\alpha_i)}
		\int_0^t (t-s)^{\alpha_i-1}\exp{(-\gamma(t- s))}\frac{{(\xi -{\tilde\xi})^2}^*(s)}{\exp{(\gamma s)}}ds \\
		&\leq \frac{\displaystyle \max_{1\le i\le d}\displaystyle \sum_{j=1}^d(|a_{ij}|+|b_{ij}|)\exp{(\gamma t)}}{\Gamma(\alpha_i)\gamma^{\alpha_i}}
		\int_0^t (t-s)^{\alpha_i-1}\exp{(-\gamma(t- s))}ds \|\xi^1-\tilde{\xi}^1\|_{\gamma} \\ 
		&\hspace{1.5cm}+\frac{\displaystyle \max_{1\le i\le d}\displaystyle \sum_{k=1}^n|e_{ik}|\exp{(\gamma t)}}{\Gamma(\alpha_i)\gamma^{\alpha_i}}
		\int_0^t (t-s)^{\alpha_i-1}\exp{(-\gamma(t- s))}ds\|\xi^2-{\tilde\xi}^2\|_{\gamma}
	\end{align*}
\begin{align*}
&\leq \frac{\displaystyle \max_{1\le i\le d}\displaystyle \sum_{j=1}^d(|a_{ij}|+|b_{ij}|)\exp{(\gamma t)}}{\gamma^{\alpha_i}}
\|\xi^1-\tilde{\xi}^1\|_{\gamma}
+\frac{\displaystyle \max_{1\le i\le d}\displaystyle \sum_{k=1}^n|e_{ik}|\exp{(\gamma t)}}{\gamma^{\alpha_i}}
\|\xi^2-\tilde{\xi}^2\|_{\gamma},
\end{align*}
where we have used the fact that $$\int_0^\infty u^{\alpha_i-1}\exp{(-u)}du=\Gamma(\alpha_i).$$
It implies that
\begin{align*}
\frac{|(\mathcal{T}_{\Phi_0}^1\xi)_i(t)-(\mathcal{T}_{\Phi_0}^1\xi)_i(t)|}{\exp{(\gamma t)}}
\leq \frac{\displaystyle\max_{1\leq i\leq d}\displaystyle\sum_{j=1}^d|a_{ij}|+|b_{ij}|}{\gamma^{\alpha_i}} \|\xi^1-{\tilde\xi}^1\|_\gamma+\frac{\displaystyle\max_{1\leq i\leq d}\displaystyle\sum_{k=1}^n|e_{ik}|}{\gamma^{\alpha_i}} \|\xi^2-{\tilde\xi}^2\|_{\gamma}
\end{align*}
for all $i=1,2,...,d,\; t\in [0,T]$. Thus,
\begin{align}
\notag \|\mathcal{T}_{\Phi_0}^1\xi
-\mathcal{T}_{\Phi_0}^1\xi\|_\gamma&\leq \max_{1\leq l\leq d}\frac{\displaystyle\max_{1\leq i\leq d}\displaystyle\sum_{j=1}^d|a_{ij}|+|b_{ij}|}{\gamma^{\alpha_l}} \|\xi^1-{\tilde\xi}^1\|_\gamma\\
&\hspace{2cm}+\max_{1\leq l\leq d}\displaystyle\frac{\displaystyle\max_{1\leq i\leq d}\displaystyle\sum_{k=1}^n|e_{ik}|}{\gamma^{\alpha_l}} \|\xi^2-{\tilde\xi}^2\|_{\gamma}.\label{tau1}
\end{align}
Furthermore,
\begin{align*}
&\left|(\mathcal{T}_{\Phi_0}^2\xi)_j(t)-(\mathcal{T}_{\Phi_0}^2\xi)_j(t)\right|\\
&=\left|\sum_{k=1}^dc_{jk}(\xi^1_k(t)-{{\tilde\xi}^1}_k(t))+\sum_{k=1}^nd_{jk}(\xi^2_k(t-\tau_3(t))-{\tilde\xi^2}_k(t-\tau_3(t)))\right|\\
&\le \sum_{k=1}^d\left|c_{jk}(\xi^1_k(t)-{\tilde{\xi}^1}_k(t))\right|+\sum_{k=1}^n\left|d_{jk}(\xi^2_k(t-\tau_3(t))-{\tilde\xi^2}_k(t-\tau_3(t)))\right|\\
&\le\displaystyle \max_{1\le j\le n}\displaystyle \sum_{k=1}^d|c_{jk}|\exp{(\gamma t)}
\|\xi^1-{\tilde\xi}^1\|_{\gamma}
+\displaystyle \max_{1\le j\le n}\displaystyle \sum_{k=1}^n|d_{jk}|\exp{(\gamma t)}
\|\xi^2-{\tilde\xi}^2\|_{\gamma}
\end{align*}
for $1\le j\le n,\;t\in[0,T]$, which leads to that
\begin{align} \label{tau2}
\|\mathcal{T}_{\Phi_0}^2\xi-\mathcal{T}_{\Phi_0}^2\tilde{\xi}\|_\gamma\leq \displaystyle\max_{1\leq j\leq n}\displaystyle\sum_{k=1}^d|c_{jk}| \|\xi^1-{\tilde\xi}^1\|_\gamma+\displaystyle\max_{1\leq j\leq n}\displaystyle\sum_{k=1}^n|d_{jk}|\|\xi^2-{\tilde\xi}^2\|_\gamma.
\end{align}
By combining \eqref{tau1} and \eqref{tau2}, we have the following estimate
\begin{align*}
\|\mathcal{T}_{\Phi_0}\xi-\mathcal{T}_{\Phi_0}\tilde{\xi}\|_\gamma&\le \max_{1\leq l\leq d}\left( \frac{\displaystyle\max_{1\leq i\leq d}\displaystyle\sum_{j=1}^d|a_{ij}|+|b_{ij}|}{\gamma^{\alpha_l}}+\displaystyle\max_{1\leq j\leq n}\displaystyle\sum_{k=1}^d|c_{jk}|\right) \|\xi^1-{\tilde\xi}^1\|_\gamma\\
&\hspace{2cm}+\max_{1\leq l\leq d}\left(\frac{\displaystyle\max_{1\leq i\leq d}\displaystyle\sum_{k=1}^n|e_{ik}|}{\gamma^{\alpha_l}}+\displaystyle\max_{1\leq j\leq n}\displaystyle\sum_{k=1}^n|d_{jk}|\right)\|\xi^2-{\tilde\xi}^2\|_\gamma.
\end{align*}
Choose $\gamma>0$ large enough such that
\begin{align*}
\max_{1\leq l\leq d}\frac{\displaystyle\max_{1\leq i\leq d}\displaystyle\sum_{j=1}^d|a_{ij}|+|b_{ij}|}{\gamma^{\alpha_l}}+\displaystyle\max_{1\leq j\leq n}\displaystyle\sum_{k=1}^d|c_{jk}|&:=p_1<1,\\
\max_{1\leq l\leq d}\frac{\displaystyle\max_{1\leq i\leq d}\displaystyle\sum_{k=1}^n|e_{ik}|}{\gamma^{\alpha_l}}+\displaystyle\max_{1\leq j\leq n}\displaystyle\sum_{k=1}^n|d_{jk}|&:=p_2<1.
\end{align*}
Then,
$$\|\mathcal{T}_{\Phi_0}\xi-\mathcal{T}_{\Phi_0}\tilde{\xi}\|_\gamma\le \max\left\{p_1,p_2\right\}\|\xi-\tilde\xi\|_\gamma,$$
and thus the operator $\mathcal{T}_{\Phi_0}$ is contractive in $X$ with respect to the norm $\|\cdot\|_\gamma$. Banach fixed point theorem ensures that this operator has a unique fixed point which is also the unique solution to initial value problem \eqref{E1}-\eqref{dkd}. The proof is complete.
\end{proof}
\begin{remark}
The arguments in the proof of Theorem \ref{EU} are independent of the length $T$ of the existence interval. So, the theorem is still valuable on the semi-real line $[0,\infty)$ instead of the finite interval $[0, T]$.
\end{remark}
\begin{remark}
It is worth noting that  when the delays are strictly positive (e.g., $\tau_2(t), \tau_3(t)>0$ for all $t>0$) or constants, then it seems that no assumptions on $D$ and $C$ are required. Indeed, to prove the existence and uniqueness of the solution to the system \eqref{E1}-\eqref{dkd} in this case, the arguments as in the proof of \cite[Lemma 1]{Cui} can be applied. However, when the delays vary arbitrarily as in our current paper, that approach doesn't seem to work. To our knowledge, the most efficient strategy is to use Banach's fixed point theorem as shown above, and thus the assumption $\|C\|_\infty, \|D\|_\infty<1$ is essential.
\end{remark}
\section{Positivity of mixed-order coupled systems with time-varying delays}\label{s4}
Consider the system \eqref{E1}-\eqref{dkd}:
\begin{equation*}
\begin{cases}
^{\!C}D^{\hat{\alpha}}_{0+}x(t)&=Ax(t)+Bx(t-\tau_1(t))+Ey(t-\tau_2(t))+f(t),\; t\in (0,T],\\
y(t)&=Cx(t)+Dy(t-\tau_3(t))+g(t),\; t\in [0,T],\\
x(\cdot)&=\psi(\cdot),\quad y(\cdot)=\varphi(\cdot),\;\text{on} \;[-r,0].
\end{cases}
\end{equation*}
Assume that the conditions as in the section 3 hold.
\begin{definition}
The system \eqref{E1} is positive if for any $f,g\succeq 0$ on $[0,T]$ and the initial conditions $\psi,\varphi\succeq 0$ on $[-r,0]$ subjected the condition (K), its solution $\Phi(\cdot;\psi,\varphi)$ satisfies 
\[\Phi(t;\psi,\varphi)\succeq 0,\;\forall t\in [0,T].\]
\end{definition}
Our task in this section is to find criteria to characterize the positivity of the system \eqref{E1}.
\begin{theorem} \label{D2}
Consider the system \eqref{E1}. Suppose that $A$ is Metzler, $B,E,C,D$ are nonnegative. Moreover, $C$ and $D$ satisfy the conditions \eqref{c1} and \eqref{d1}, respectively.
Then, the system \eqref{E1} is positive.
\end{theorem}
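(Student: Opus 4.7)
The plan is to reformulate the Lyapunov--Perron operator so that it manifestly preserves the cone of nonnegative continuous functions, and then invoke Banach's fixed point theorem to propagate positivity from nonnegative Picard iterates to the unique solution guaranteed by Theorem~\ref{EU}.

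The operator $\mathcal{T}_{\Phi_0}$ built in Theorem~\ref{EU} is not obviously positivity-preserving, because although $A$ is Metzler it may still have negative diagonal entries, so $\sum_j a_{ij}\xi^1_j(s)$ can be negative even when $\xi^1\succeq 0$. To circumvent this I would absorb each diagonal entry $a_{ii}$ into a scalar Caputo linear equation and apply the scalar variation-of-constants formula. Concretely, I would define a modified operator $\tilde{\mathcal{T}}_{\Phi_0}$ by
$$
(\tilde{\mathcal{T}}^1_{\Phi_0}\xi)_i(t)=E_{\alpha_i}(a_{ii}t^{\alpha_i})\psi_i(0)+\int_0^t(t-s)^{\alpha_i-1}E_{\alpha_i,\alpha_i}(a_{ii}(t-s)^{\alpha_i})\,h_i[\xi](s)\,ds,
$$
where $h_i[\xi](s):=\sum_{j\neq i}a_{ij}\xi^1_j(s)+\sum_j b_{ij}\xi^1_j(s-\tau_1(s))+\sum_k e_{ik}\xi^2_k(s-\tau_2(s))+f_i(s)$, while leaving $(\tilde{\mathcal{T}}^2_{\Phi_0}\xi)_j$ identical to $(\mathcal{T}^2_{\Phi_0}\xi)_j$ from Theorem~\ref{EU}. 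By the standard equivalence between a scalar Caputo initial value problem and its Volterra integral representation, the fixed points of $\tilde{\mathcal{T}}_{\Phi_0}$ in $X$ coincide with those of $\mathcal{T}_{\Phi_0}$, and hence equal the unique solution $\Phi^*$ of \eqref{E1}--\eqref{dkd}.

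I would then verify that $\tilde{\mathcal{T}}_{\Phi_0}$ leaves the closed cone $K:=\{\xi\in X:\xi\succeq 0\}$ invariant. The critical ingredient is that, for $\alpha\in(0,1]$ and every real $z$, the Mittag-Leffler functions satisfy $E_\alpha(z)>0$ and $E_{\alpha,\alpha}(z)>0$. Combined with the hypotheses --- $A$ Metzler so $a_{ij}\ge 0$ for $j\neq i$; $B,E,C,D$ nonnegative; $f,g,\psi,\varphi\succeq 0$ --- this makes every summand in the definition of $\tilde{\mathcal{T}}_{\Phi_0}\xi$ nonnegative whenever $\xi\succeq 0$, so $\tilde{\mathcal{T}}_{\Phi_0}(K)\subseteq K$. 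Afterwards I would establish that $\tilde{\mathcal{T}}_{\Phi_0}$ is a contraction on $(X,\|\cdot\|_\gamma)$ for $\gamma$ sufficiently large by repeating the weighted-norm estimates of Theorem~\ref{EU}, now absorbing the continuous and hence bounded factor $E_{\alpha_i,\alpha_i}(a_{ii}\cdot)$ on $[0,T]$ into the constants that multiply the integrals.

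With these ingredients in place, Banach's fixed point theorem applied to $\tilde{\mathcal{T}}_{\Phi_0}$ yields convergence $\xi^{(n)}\to\Phi^*$ in $\|\cdot\|_\gamma$ from any starting point $\xi^{(0)}\in X$. Picking $\xi^{(0)}$ to be the continuous nonnegative extension of $\Phi_0$ to $[-r,T]$ (for instance $\xi^{(0)}(t)=\Phi_0(0)$ for $t\in[0,T]$), induction using cone-invariance gives $\xi^{(n)}\in K$ for every $n$; closedness of $K$ in $X$ then forces $\Phi^*\succeq 0$ on $[-r,T]$, which is the positivity claim. The main technical point I expect to work at is the contractivity estimate for $\tilde{\mathcal{T}}_{\Phi_0}$: the Mittag-Leffler weights must not destroy the $\gamma^{-\alpha_i}$ decay that drove the contraction in Theorem~\ref{EU}. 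This is safe because $a_{ii}$, $T$, and all remaining entries $a_{ij}$ ($j\ne i$), $b_{ij}$, $c_{jk}$, $d_{jk}$, $e_{ik}$ are fixed, so the kernel $E_{\alpha_i,\alpha_i}(a_{ii}(t-s)^{\alpha_i})$ contributes only a bounded multiplicative constant and the same choice of $\gamma$ (possibly enlarged) continues to work.
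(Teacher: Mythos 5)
Your argument is correct, but it follows a genuinely different route from the paper's. The paper also rewrites the first block with a Mittag--Leffler kernel (using a uniform shift $A=-\rho I_d+(A+\rho I_d)$ rather than your per-component absorption of $a_{ii}$), but it then proceeds in two stages: first a first-crossing-time contradiction argument showing \emph{strict} positivity when $\psi\succ0$ and $g\succ0$, and then an approximation of general nonnegative data by $\psi+\tfrac1k{\bf 1}_d$, $\varphi+\tfrac1k m$, $g+\tfrac1k{\bf 1}_n$, with a monotone-decreasing sequence of solutions identified in the limit via equicontinuity, Dini's theorem, and uniqueness. You instead observe that the Lyapunov--Perron operator, once the diagonal of $A$ is absorbed into the kernel $E_{\alpha_i,\alpha_i}(a_{ii}(t-s)^{\alpha_i})$, maps the closed cone of nonnegative functions into itself while remaining a contraction in the weighted norm of Theorem~\ref{EU}, so nonnegativity of the fixed point follows from closedness of the cone under Picard iteration. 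This is shorter and avoids the limiting argument entirely; what it does not deliver is the strict-positivity refinement that the paper's first stage provides and reuses internally. Both arguments rest on the same nontrivial fact, the positivity of $E_{\alpha}$ and $E_{\alpha,\alpha}$ on all of $\R$ for $\alpha\in(0,1]$ (needed when $a_{ii}<0$), which you correctly flag as the critical ingredient but should support with a citation, and on the equivalence of the Mittag--Leffler integral representation with the Caputo initial value problem for continuous forcing, for which \cite[Lemma 3.1]{Cong_Tuan_17} suffices. Two small points to make explicit: the iteration should be run on the closed affine subset of $C([-r,T];\R^{d+n})$ of functions agreeing with $(\psi,\varphi)$ on $[-r,0]$, where the compatibility condition (K) guarantees continuity of the image of $\tilde{\mathcal T}_{\Phi_0}$ at $t=0$; and the contraction constant for the second block still needs \eqref{c1}--\eqref{d1}, exactly as in Theorem~\ref{EU}, since those terms carry no factor $\gamma^{-\alpha_i}$.
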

\begin{proof} By Theorem \ref{EU}, the system \eqref{E1} with the initial condition $x(\cdot)=\psi(\cdot),\;y(\cdot)=\varphi(\cdot)$ on $[-r,0]$ has a unique solution on $[0,T].$ Due to the assumption that $A$ is Metzler, there is a positive constant $\rho>0$ such that $A=-\rho I_d+(A+\rho I_d)$ where $A+\rho I_d$ is nonnegative. We rewrite the first equation in the system \eqref{E1} in the form
$$^{\!C}D^{\hat{\alpha}}_{0+}x(t)=-\rho I_dx(t)+(A+\rho I_d)x(t)+Bx(t-\tau_1(t))+Ey(t-\tau_2(t))+f(t),\;t\in (0,T].$$
Hence, by using the variation-of-constants formula for fractional differential equations in \cite[Lemma 3.1]{Cong_Tuan_17}, we have
\begin{equation*}
\begin{cases}
x_i(t)&=E_{\alpha_i}(-\rho t^{\alpha_i})\psi_i(0)\\
&+\int_0^t (t-s)^{\alpha_i-1}E_{\alpha_i,\alpha_i}(-\rho (t-s)^{\alpha_i}) \Big[\displaystyle \sum_{k=1}^n e_{ik}y_k(s-\tau_2(s))+f_i(s)\Big]ds \\
&+\int_0^t (t-s)^{\alpha_i-1}E_{\alpha_i,\alpha_i}(-\rho (t-s)^{\alpha_i}) \Big[\displaystyle \sum_{j=1}^d(a_{ij}+\rho\delta_{ij})x_j(s)+\displaystyle \sum_{j=1}^db_{ij}x_j(s-\tau_1(s))\Big]ds,\\
y_j(t)&=\displaystyle \sum_{i=1}^dc_{ji}x_i(t)+\displaystyle \sum_{k=1}^nd_{jk}y_k(t-\tau_3(t))+g_j(t),
\end{cases}
\end{equation*}
for $i={1,\dots,d}$, $j=1,\dots,n$, and $t\in [0,T]$, where
\begin{equation*}
\delta_{ij}:=\begin{cases}
1,\;\text{if}\; i=j,\\
0,\;\text{if}\;i\neq j.
\end{cases}
\end{equation*}
We first show that if $\psi\succ0$, $\varphi\succeq 0$ on $[-r,0]$ and $f\succeq 0$, $g\succ0$ on $[0,T]$, then $\Phi(t;\psi,\varphi)\succ 0,\;\forall t\in[0,T]$.
By the compatibility condition $(K)$, then $\Phi(0)\succ0.$ Assume, for the sake of contradiction, that $\Phi(t;\psi,\varphi)\nsucc 0,\;t\in[0,T]$. This implies that there is an index $i=1,\dots,d,d+1,\dots, d+n$ and $t>0$ such that
$$\Phi_{i}(t;\psi,\varphi)=0,\quad \Phi_j(t;\psi,\varphi)\geq 0,\; 1\le j \le d+n,\;j\ne i.$$
Put
$$t_*=\inf\left\{t\in(0,T]:\Phi_{i}(t;\psi,\varphi)=0\;\text{for some}\; 1\leq i\leq d+n\right\}.$$
Then, $t_*>0$ and there is an index $i_0=1,\dots, d+n$ such that $\Phi_{i_0}(t_*;\psi,\varphi)=0$, $\Phi_{i_0}(t;\psi,\varphi)>0$ for $t\in [0,t_*)$,
\begin{equation*}
\Phi_{i}(t;\psi,\varphi)\geq 0, \quad \forall i=1,2,…,d+n,\;i\neq i_0\;\text{and}\;t\in (0,t_*].
\end{equation*}

If $i_0\in\left\{1,\dots,d\right\}$, then by the fact that the Mittag-Leffler functions $E_{\alpha_{i_0}}(\cdot), E_{\alpha_{i_0},\alpha_{i_0}}(\cdot)$ are positive on $\R$, we see 
\begin{align*}
0=\Phi_{i_0}(t_*;\psi,\varphi)&=x_{i_0}(t_*;\psi,\varphi)\\
&=E_{\alpha_{i_0}}(-\rho {t_*}^{\alpha_{i_0}})\psi_{i_0}(0) +\int_0^{t_*} (t_*-s)^{\alpha_{i_0}-1}E_{\alpha_{i_0},\alpha_{i_0}}(-\rho (t_*-s)^{\alpha_{i_0}})f_{i_0}(s)ds \\
&\hspace{0.4cm}+\int_0^{t_*} (t_*-s)^{\alpha_{i_0}-1}E_{\alpha_{i_0},\alpha_{i_0}}(-\rho (t_*-s)^{\alpha_{i_0}})\displaystyle \sum_{j=1}^d(a_{{i_0}j}+\rho\delta_{{i_0}j})x_j(s;\psi,\varphi)ds \\
&\hspace{0.4cm}+\int_0^{t_*} (t_*-s)^{\alpha_{i_0}-1}E_{\alpha_{i_0},\alpha_{i_0}}(-\rho (t_*-s)^{\alpha_{i_0}}) \displaystyle \sum_{j=1}^db_{{i_0}j}x_j(s-\tau_1(s);\psi,\varphi)ds \\
&\hspace{0.4cm}+\int_0^{t_*} (t_*-s)^{\alpha_{i_0}-1}E_{\alpha_{i_0},\alpha_{i_0}}(-\rho (t_*-s)^{\alpha_{i_0}}) \displaystyle \sum_{k=1}^ne_{{i_0}k}y_k(s-\tau_2(s);\psi,\varphi)ds \\
&\ge E_{\alpha_{i_0}}(-\rho {t_*}^{\alpha_{i_0}})\psi_{i_0}(0) >0,
\end{align*}
a contradiction.

If $i_0\in\left\{d+1,\dots,d+n\right\}$, then $\Phi_{i_0}(t_*;\psi,\varphi)=y_{\bar i_0}(t_*;\psi,\varphi)=0$ with $\bar i_0=i_0-d$. Moreover, from the second equation of the system \eqref{E1}, we see that
\begin{align*}
0=y_{\bar i_0}(t_*;\psi,\varphi)&=\displaystyle \sum_{j=1}^dc_{\bar i_0j}x_j(t_*;\psi,\varphi)+\displaystyle \sum_{k=1}^nd_{\bar i_0k}y_k(t_*-\tau_3(t_*);\psi,\varphi)+g_{\bar i_0}(t_*)\ge g_{\bar i_0}(t_*)>0,
\end{align*}
a contradiction. Thus, $\Phi(t,\psi,\varphi)\succ 0$ on $[0,T]$.
	
We now consider the general case where $f,g\succeq 0$ on $[0,T]$ and the initial conditions $\psi,\varphi\succeq0$ on $[-r,0]$. Based on the assumption \eqref{d1}, we can find a unique vector $ m=(I_n-D)^{-1}(C {\bf 1}_d  + {\bf 1}_n) \in \mathbb R^n_+$ such that $$ C {\bf 1}_d + Dm + {\bf 1}_n = m. $$
For each $k \in \mathbb N$, we put $\psi^{k}=\psi+\frac{1}{k}{\bf 1}_d$, $\varphi^{k}=\varphi+\frac{1}{k}m$ on $[-r,0]$, $g^{k}(t)=g(t)+\frac{1}{k}{\bf 1}_n$ on $[0,T]$ and study the system
\begin{equation} \label{E2}
\begin{cases}
^{\!C}D^{\hat{\alpha}}_{0+}x(t)&=Ax(t)+Bx(t-\tau_1(t))+Ey(t-\tau_2(t))+f(t)\\
y(t)&=Cx(t)+Dy(t-\tau_3(t))+g^{k}(t)
\end{cases}
\end{equation}
with the initial condition
\begin{equation}\label{dkdk}
\begin{cases}
x(t)&=\psi^{k}(t),\; t\in [-r,0],\\
y(t)&=\varphi^{k}(t),\; t\in [-r,0].
\end{cases}
\end{equation}
The system \eqref{E2}-\eqref{dkdk} has a unique solution $\Phi^{k}(\cdot;\psi^k,\varphi^k)$. On the other hand, $\Phi^{k}(\cdot;\psi^k,\varphi^k)\succ 0$ on $[0,T]$.

Let $k,l\in\N$ and $k<l$, put $$z^{k,l}(t)=\left(\begin{array}{cc}
x^{k,l}(t) \\ y^{k,l}(t)\end{array}\right)=\Phi^{k}(t;\psi^k,\varphi^k)-\Phi^{l}(t;\psi^l,\varphi^l),\ t\in [-r,T].$$
Then,
\begin{equation}\label{teq}
\begin{cases}
^{\!C}D^{\hat{\alpha}}_{0+}x^{k,l}(t)&=Ax^{k,l}(t)+Bx^{k,l}(t-\tau_1(t))+Ey^{k,l}(t-\tau_2(t)),\;t\in [0,T]\\
y^{k,l}(t)&=Cx^{k,l}(t)+Dy^{k,l}(t-\tau_3(t))+\big(\frac{1}{k}-\frac{1}{l}\big){\bf 1}_n,\;t\in [0,T]
\end{cases}
\end{equation}
and
\begin{equation}\label{dkd33}
\begin{cases}
x^{k,l}(t)&=\big(\frac{1}{k}-\frac{1}{l}\big){\bf 1}_d,\;t\in [-r,0],\\
y^{k,l}(t)&=(\frac{1}{k}-\frac{1}{l})m,\;t\in [-r,0].
\end{cases}
\end{equation}
As shown above, the system \eqref{teq} is positive and thus $z^{k,l}(t) \succ 0,\ t\in [-r,T]$, that is, $\Phi^{k}(t;\psi^k,\varphi^k)\succ \Phi^{l}(t;\psi^l,\varphi^l),\ t\in [-r,T]$. This shows that, for each $t\in[-r,T]$, the sequence $\{\Phi^{k}(\cdot;\psi^k,\varphi^k)\}_{k=1}^\infty$ is monotonically decreasing and there exists $$ \Phi^*(t):=\left(\begin{array}{cc}
x^*(t) \\ y^*(t)\end{array}\right):=\displaystyle\lim_{k\to\infty}\Phi^{k}(t;\psi^k,\varphi^k).$$
On the other hand, it is easy to see that the sequence $\{\Phi^{k}(\cdot;\psi^k,\varphi^k)\}_{k=1}^\infty$ is uniformly bounded on $[0,T]$ which together the similar arguments as in the proof of \cite[Lemma 2.3.2, p. 26]{Lakshmikantham} implies that this sequence is equicontinuous on $[0,T]$. Hence, the its limit $\Phi^*(\cdot)$ is continuous on $[0,T]$.
By Dini's theorem, $\left\{\Phi^{k}(\cdot;\psi^k,\varphi^k)\right\}_{k=1}^\infty$ converges uniformly to $\Phi^*(\cdot)$ and $\Phi^*(\cdot)$ is nonnegative on $[-r,T]$. Noting that on $[-r,0]$
$$\Phi^*(t)=\left(\begin{array}{cc}
\psi(t) \\ \varphi(t)\end{array}\right).$$
Furthermore, from \eqref{E2}, for each $i=1,\dots,d$, $j=1,\dots,n$ and $t\in [0,T]$, we have
\begin{align*}
x_i^{(k)}&=E_{\alpha_i}(-\rho {t}^{\alpha_i})\psi_i^{k}(0) +\int_0^{t} (t-s)^{\alpha_i-1}E_{\alpha_{i},\alpha_i}(-\rho (t-s)^{\alpha_i})f_i(s)ds \\
&\hspace{2cm}+\int_0^{t} (t-s)^{\alpha_i-1}E_{\alpha_i,\alpha_i}(-\rho (t-s)^{\alpha_i})\displaystyle \sum_{j=1}^d(a_{ij}+\rho\delta_{ij})x_j^{(k)}(s)ds \\
&\hspace{2cm}+\int_0^{t} (t-s)^{\alpha_i-1}E_{\alpha_i,\alpha_i}(-\rho (t-s)^{\alpha_i}) \displaystyle \sum_{j=1}^db_{ij}x_j^{(k)}(s-\tau_1(s))ds\\
&\hspace{2cm}+\int_0^{t} (t-s)^{\alpha_i-1}E_{\alpha_i,\alpha_i}(-\rho (t-s)^{\alpha_i}) \displaystyle \sum_{j=1}^ne_{ij}y_j^{(k)}(s-\tau_2(s))ds,\\
y_j^{(k)}(t)&=\displaystyle \sum_{i=1}^dc_{ji}x_i^{(k)}(t)+\displaystyle \sum_{h=1}^nd_{jh}y_h^{(k)}(t-\tau_3(t))+g_j^{(k)}(t).
\end{align*}
Let $k\to \infty$, then
\begin{align*}
x_i^*(t)&=E_{\alpha_i}(-\rho {t}^{\alpha_i})\psi_i(0) +\int_0^{t} (t-s)^{\alpha_i-1}E_{\alpha_{i},\alpha_i}(-\rho (t-s)^{\alpha_i})f_i(s)ds \\
&\hspace{2cm}+\int_0^{t} (t-s)^{\alpha_i-1}E_{\alpha_i,\alpha_i}(-\rho (t-s)^{\alpha_i})\displaystyle \sum_{j=1}^d(a_{ij}+\rho\delta_{ij})x^*_j(s)ds \\
&\hspace{2cm}+\int_0^{t} (t-s)^{\alpha_i-1}E_{\alpha_i,\alpha_i}(-\rho (t-s)^{\alpha_i}) \displaystyle \sum_{j=1}^db_{ij}x^*_j(s-\tau_1(s))ds\\
&\hspace{2cm}+\int_0^{t} (t-s)^{\alpha_i-1}E_{\alpha_i,\alpha_i}(-\rho (t-s)^{\alpha_i}) \displaystyle \sum_{j=1}^ne_{ij}y^*_j(s-\tau_2(s))ds,\\
y^*_{j}(t)&=\displaystyle \sum_{k=1}^dc_{jk}{x^*}_k(t)+\displaystyle \sum_{k=1}^nd_{jk}y^*_k(t-\tau_3(t))+g_j(t)
\end{align*}
for $1\leq i\leq d$, $1\leq j\leq n$ and $t\in [0,T]$.
Due to uniqueness of the solution to the initial value problem \eqref{E1}-\eqref{dkd}, the function $\Phi^*$ is also its solution, so the positivity of this system is verified.	
\end{proof}
The following theorem gives a necessary criterion to ensure the positivity of the system.
\begin{theorem} \label{D1}
Consider the system \eqref{E1} with the conditions \eqref{c1}-\eqref{d1} are satisfied.
Suppose that the system \eqref{E1} is positive for some delays $\tau_k:[0,T]\rightarrow \R_{\geq 0}$ ($1\leq k\leq 3$) satisfying (T1) and
\begin{itemize}
\item[(T2)] $\tau_k(0)>0$ for $k=1,2,3$.
\end{itemize}
Then, $A$ is Metzler, $B,C,D,E$ are nonnegative. As a consequence, the system \eqref{E1} is also positive for arbitrary delays $\tau_k:[0,T]\rightarrow \R_{\geq 0}$ ($1\leq k\leq 3$) provided that (T1) is satisfied.
\end{theorem}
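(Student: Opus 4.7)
My plan is to verify each sign condition by constructing specific nonneg initial data $(\psi, \varphi)$ and inputs $(f, g)$ satisfying the compatibility condition (K), and analyzing the resulting positive trajectory asymptotically near $t = 0^+$ to isolate individual matrix entries. The enabling feature is condition (T2), $\tau_k(0) > 0$, which gives enough room to place the initial data with support strictly away from $t = 0$ so that (K) reduces to $0 = 0$ and the leading-order behavior of the solution cleanly extracts single entries. Throughout, $e_j$ denotes the $j$-th standard basis vector.

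I would carry out four steps. For $B \succeq 0$: take $\psi \succeq 0$ continuous with $\psi(0) = 0$ and $\psi(-\tau_1(0)) = e_j$, and $\varphi, f, g \equiv 0$. The integral form of the $x$-equation then yields
\[
x_i(t) = \frac{b_{ij}}{\Gamma(\alpha_i+1)} t^{\alpha_i}(1 + o(1)) \quad \text{as } t \to 0^+,
\]
and positivity of $x_i$ forces $b_{ij} \geq 0$. For $A$ Metzler and $C \succeq 0$: take $\psi, \varphi, g \equiv 0$ and $f(t) \equiv e_j$. Then $x_j(t) \sim t^{\alpha_j}/\Gamma(\alpha_j+1)$, and iterating once through $A$ gives $x_i(t) \sim a_{ij}\, t^{\alpha_i+\alpha_j}/\Gamma(\alpha_i+\alpha_j+1)$ for $i \neq j$, forcing $a_{ij} \geq 0$; the algebraic equation together with $y_{\tau_3} \equiv 0$ for small $t$ gives $y_k(t) \sim c_{kj}\, t^{\alpha_j}/\Gamma(\alpha_j+1)$, forcing $c_{kj} \geq 0$. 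For $E \succeq 0$: take $\psi, f, g \equiv 0$ and $\varphi$ a nonneg continuous bump with $\varphi(0) = \varphi(-\tau_3(0)) = 0$ and peak $e_k$ at (or slightly perturbed from) $-\tau_2(0)$, yielding $x_i(t) \sim e_{ik}\, t^{\alpha_i}/\Gamma(\alpha_i+1)$ and hence $e_{ik} \geq 0$. For $D \succeq 0$: take $\psi, f, g \equiv 0$ and $\varphi$ a nonneg continuous bump with $\varphi(0) = \varphi(-\tau_3(0)) = 0$ and peak $e_l$ at $-a := -\tau_3(0) + \epsilon$ for small $\epsilon > 0$. By continuity of $\tau_3$, there is $t^* \approx \epsilon$ with $t^* - \tau_3(t^*) = -a$, so $y(t^* - \tau_3(t^*)) = e_l$ and $y_k(t^*) = (Cx(t^*))_k + d_{kl}$, which in the narrow-bump limit yields $d_{kl} \geq 0$.

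The main obstacle I anticipate is the coincidence case $\tau_2(0) = \tau_3(0)$ in steps three and four, where the same bump in $\varphi$ simultaneously drives $y_{\tau_3}$ (the desired $De_l$ or $Ee_k$ signal) and $y_{\tau_2}$ (which feeds back through the $x$-equation, and then into $y$ via $C$, threatening to contaminate the leading-order analysis). I would resolve this by fixing the peak location and letting the bump width $\delta \to 0$: a direct estimate shows the contaminating $x$-contribution scales as $O(\delta^{\min_i \alpha_i})$ and vanishes, while the $De_l$ (respectively $Ee_k$) signal persists, yielding the desired sign conditions in the limit. Once $A$ is Metzler and $B, C, D, E$ are nonneg, Theorem \ref{D2} applies directly to give positivity for arbitrary delays satisfying (T1), completing the proof.
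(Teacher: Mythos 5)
Your overall strategy --- isolate each entry with localized nonnegative test data whose support is pushed away from $t=0$ using (T2), read the sign off the leading small-time behaviour, and then invoke Theorem \ref{D2} for the converse --- is exactly the paper's strategy, and your $B$, $A$, $C$ steps are sound (your extraction of $a_{ij}$ and $c_{kj}$ from $f\equiv e_j$ via a second iteration of the integral equation is a valid, if slightly more delicate, variant of the paper's choice $\psi(0)=e_{j}$, which makes $x_{j}$ of order $1$ rather than $t^{\alpha_j}$ near $0$).

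The gap is in your $D$ step, and it recurs in the coincidence case $\tau_2(0)=\tau_3(0)$ of your $E$ step. You place the peak of $\varphi$ at $-a=-\tau_3(0)+\epsilon$ and assert that ``by continuity of $\tau_3$, there is $t^*\approx\epsilon$ with $t^*-\tau_3(t^*)=-a$.'' This is false in general: the map $t\mapsto t-\tau_3(t)$ is only continuous, equals $-\tau_3(0)$ at $t=0$, and need never move to the right of $-\tau_3(0)$ (take $\tau_3(t)=\tau_3(0)+t$, for which $t-\tau_3(t)\equiv-\tau_3(0)$, or $\tau_3(t)=\tau_3(0)+2t$, for which it decreases); the same applies to $s-\tau_2(s)$ and a peak perturbed off $-\tau_2(0)$. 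A narrow bump centred at a perturbed location may therefore never be sampled by the delayed argument, and your $\delta\to 0$ limit then extracts nothing. The root cause is your self-imposed requirement $\varphi(-\tau_3(0))=0$: condition (K) does not force this --- it only fixes $\varphi(0)=C\psi(0)+D\varphi(-\tau_3(0))+g(0)$, and you are free to choose $\varphi(0)\succeq 0$ and $g(0)\succeq 0$ to absorb a nonzero $\varphi(-\tau_3(0))$, e.g.\ $g(0)=\max(-De_l,0)$ and $\varphi(0)=De_l+g(0)$. The robust construction, which is the one the paper uses, is to set $\varphi\equiv e_l$ on the whole interval $[-r,-\tau]$ with $\tau:=\min_k\tau_k(0)/2$; since $t-\tau_3(t)\to-\tau_3(0)<-\tau$, continuity alone guarantees $y(t-\tau_3(t))=\varphi(t-\tau_3(t))=e_l$ for all small $t$ regardless of how $\tau_3$ behaves, whence $y_{k}(t_0)=(Cx(t_0))_k+d_{kl}+g_k(t_0)<0$ for small $t_0$ when $d_{kl}<0$. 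With that repair (and the analogous one for $E$), your argument closes and coincides with the paper's.
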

\begin{proof} According to Theorem \ref{EU}, for any $\psi,\varphi\in C([-r,0];\R)$ provided that the compatibility condition (K) is verified, the initial value problem \eqref{E1}-\eqref{dkd} has a unique solution $$\Phi(t;\psi,\varphi)=\left(\begin{array}{cc}
x(t) \\ y(t)\end{array}\right) =\left(\begin{array}{cc}
x_1(t) \\ \vdots \\ x_d(t) \\y_1(t) \\ \vdots \\ y_n(t)\end{array}\right),$$
where
\begin{align*}\begin{cases}
x_i(t)&=\psi_i(0) +\frac{1}{\Gamma(\alpha_i)}\int_0^t (t-s)^{\alpha_i-1} \left[\displaystyle \sum_{j=1}^da_{ij}x_j(s)+\displaystyle \sum_{j=1}^db_{ij}x_j(s-\tau_1(s))\right]ds\\
&\hspace{1.35cm}+\frac{1}{\Gamma(\alpha_i)}\int_0^t (t-s)^{\alpha_i-1}\left[\displaystyle \sum_{k=1}^ne_{ik}y_k(s-\tau_2(s))+f_i(s)\right]ds,\\
y_j(t)&=\displaystyle \sum_{i=1}^dc_{ji}x_i(t)+\displaystyle \sum_{k=1}^nd_{jk}y_k(t-\tau_3(t))+g_j(t)
\end{cases}
\end{align*}
with $i=1,\dots,d$, $j=1,\dots,n$ and $t\in [0,T]$.

Let the system \eqref{E1}-\eqref{dkd} be positive and put $\tau:=\displaystyle\min\left\{\frac{\tau_1(0)}{2},\frac{\tau_2(0)}{2},\frac{\tau_3(0)}{2}\right\}>0.$ We first show that $C$ is a nonnegative matrix. On the contrary, suppose that $C$ has an entry $c_{j_0h_0}<0$. Let $\psi(0)=e_{h_0},\ \varphi(s)=0,\ \forall s \in [-r,-\tau]$, here $e_{h_0}=(0,\dots,1,\dots,0)^{\rm T}$ denotes the unit vector in $\R^d$ with the $h_0$-coordinate equals to 1.
	
Because the continuity of  $x_i(\cdot),\ i=1,\dots,d$, $y_j(\cdot),\ j=1,\dots,n$ and $\tau_3(\cdot)$, we can find $t_0>0$ (small enough) so that on $[0,t_0]$: 
\begin{itemize}
	\item $t-\tau_3(t)\le -\tau,$
	\item $\displaystyle \sum_{i=1,i\ne h_0}^dc_{j_0i}x_i(t)<\frac{|c_{j_0h_0}|}{2},$
	\item $x_{h_0}(t)>\frac{1}{2}$.
\end{itemize} 
Choose a function $g\in C\left([0,+\infty),\R^n_{\geq 0}\right)$ satisfies the condition (K) and $g(s)=0,\ \forall s\ge t_0$, then we have
\begin{align*}
	y_{j_0}(t_0)&=\displaystyle \sum_{i=1}^dc_{j_0i}x_i(t_0)+\displaystyle \sum_{k=1}^nd_{j_0k}y_k(t_0-\tau_3(t_0))+g_{j_0}(t_0) \\
	&=c_{j_0h_0}x_{h_0}(t_0)+\displaystyle \sum_{i=1,i\ne h_0}^dc_{j_0i}x_i(t)\\
	&<\frac{1}{2}c_{j_0h_0}+\frac{|c_{j_0h_0}|}{2}=0,
\end{align*} 
a contradiction.

Next, we assume that $D$ is not a nonnegative matrix, that is, there exists an entry $d_{j_0h_0}<0$. Take $\psi=0$ on $[-r,0],\ \varphi(s)=e_{h_0},\ \forall s \in [-r,-\tau]$. Because the continuity of  $x_i(\cdot),\ i=1,\dots,d$, $y_j(\cdot),\ j=1,\dots,n$ and $\tau_3(\cdot)$, we can find $t_0>0$ (small enough) so that on $[0,t_0]$: 
	\begin{itemize}
		\item $t-\tau_3(t)\le -\tau,$
		\item $\displaystyle \sum_{i=1}^dc_{j_0i}x_i(t)<\frac{|d_{j_0h_0}|}{2}.$
\end{itemize} 
For a function $g\in C\left([0,+\infty),\R^n_{\geq 0}\right)$ verifying the condition (K) and $g(s)=0,\; \forall s\ge t_0$, then 
	\begin{align*}
		y_{j_0}(t_0)&=\displaystyle \sum_{i=1}^dc_{j_0i}x_i(t_0)+\displaystyle \sum_{k=1}^nd_{j_0k}y_k(t_0-\tau_3(t_0))+g_{j_0}(t_0) \\
		&<\frac{|d_{j_0h_0}|}{2}+d_{j_0h_0}\\
		&=\frac{1}{2}d_{j_0h_0}<0,
	\end{align*} 
	a contradiction.

We now prove that $A=(a_{ij})_{1\leq i,j\leq d}$ is Metzler. On the contrary, suppose that $A$ has an entry $a_{i_0j_0}<0$, where $i_0\neq j_0$. Let $f=0$ on $[0,T]$, $\varphi=0$ on $[-r,-\tau]$ and
\begin{equation*}
\psi(t)=\begin{cases}
e_{j_0},\quad&\text{if}\;t=0,\\
0,\quad &\text{if}\;t\in [-r,-\tau],
\end{cases}
\end{equation*}
where $e_{j_0}=(0,\dots,1,\dots,0)^{\rm T}$ is the unit vector in $\R^d$ with the $j_0$-coordinate equals to 1. Since the continuity of $x_i(\cdot),\ i=1,2,...,d$, and $\tau_1(\cdot),\ \tau_2(\cdot)$, we can find $t_0>0$ (small enough) so that on $[0,t_0]$:
\begin{equation}\label{a1}
	t-\tau_k(t)\le -\tau,\;k=1,2,
\end{equation}
and
\begin{itemize}
\item $\displaystyle \sum_{j=1,j\ne j_0}^da_{i_0j}x_j(t)<\frac{|a_{i_0j_0}|}{2},$
\item $x_{j_0}(t)>\frac{1}{2}$.
\end{itemize}
Then,
$$x_{i_0}(t_0)=\frac{1}{\Gamma(\alpha_{i_0})}\int_0^{t_0} (t_0-s)^{\alpha_{i_0}-1} \left[a_{i_0j_0}x_{j_0}(s)+\sum_{j=1,j\ne j_0}^da_{{i_0}j}x_j(s)\right]ds<0,$$
a contradiction.

If $B=(b_{ij})_{1\leq i,j\leq d}$ is not a nonnegative matrix, there is an entry $b_{i_0j_0}<0$. Take $f=0$ on $[0,T]$, $\varphi=0$ on $[-r,-\tau]$ and
\begin{equation*}
\psi(t)=\begin{cases}
0,\quad&\text{if}\;t=0,\\
e_{j_0},\quad &\text{if}\;t\in [-r,-\tau].
\end{cases}
\end{equation*}
By the same arguments as above, there is an $t_0>0$ such that the estimate \eqref{a1} is true and $\displaystyle \sum_{j=1}^da_{i_0j}x_j(t)<|b_{i_0j_0}|$ on $[0,t_0]$. It implies
$$x_{i_0}(t_0)=\frac{1}{\Gamma(\alpha_{i_0})}\int_0^{t_0} (t_0-s)^{\alpha_{i_0}-1} \Big[\sum_{j=1}^da_{i_0j}x_j(s)+b_{i_0j_0}\Big]ds<0,$$
which contradicts the positivity of the system. Thus, $B$ is nonnegative.
	
Finally, if $E=(e_{ij})_{1\leq i\leq d,1\leq j\leq n}$ is not nonnegative, we suppose that there exists an entry $e_{i_0j_0}<0$. Choose $f=0$ on $[0,T]$, $\psi=0$ on $[-r,-\tau]$ and
	\begin{equation*}
		\varphi(t)=\begin{cases}
			0,\quad&\text{if}\;t=0,\\
			e_{j_0},\quad &\text{if}\;t\in [-r,-\tau].
		\end{cases}
	\end{equation*}
There exists $t_0>0$ (small enough) such that \eqref{a1} is verified and $\displaystyle \sum_{j=1}^da_{i_0j}x_j(t)<\frac{|e_{i_0j_0}|}{2}$ for all $t\in[0,t_0]$. Thus,
	$$x_{i_0}(t_0)=\frac{1}{\Gamma(\alpha_{i_0})}\int_0^{t_0} (t_0-s)^{\alpha_{i_0}-1} \Big[e_{i_0j_0}+\sum_{j=1}^da_{{i_0}j}x_j(s)\Big]ds<0,$$
	a contradiction. Hence, $E$ is nonnegative. 
	
Thus, if the initial value problem \eqref{E1}-\eqref{dkd} is positive, then $A$ is Metzler, $B,C,D,E$ are nonnegative. Then, by Theorem \ref{D2}, the system \eqref{E1}-\eqref{dkd} is still positive for every delays $\tau_k: [0,T]\rightarrow \R_{\geq 0}$ $(1\le k \le 3)$ provided that the condition (T1) holds. The proof is complete.
\end{proof}
\section{Asymptotic behavior of solutions to mixed-order positive linear coupled systems with unbounded delays}\label{s5}
Let $\hat{\alpha}=(\alpha_1,...,\alpha_d) \in (0,1]\times...\times(0,1] \subset \R^d$, $A,B\in \R^{d\times d}$, $E\in \R^{d\times n}$, $C\in \R^{n\times d}$, $D\in \R^{n\times n}$, $f\in C([0,\infty);\R^d)$, $g\in C([0,\infty);\R^n)$ and $\tau_k:[0,\infty)\rightarrow \R_{\geq 0}$ $(k=1,2,3)$ is continuous and that
\begin{itemize}
\item[(T3)] $t-\tau_k(t)\ge-r,\quad \forall t\in [0,\infty)$,
\item[(T4)] $\displaystyle\lim_{t\to\infty}(t-\tau_k(t))=\infty.$
\end{itemize}
Consider the following system
\begin{equation} \label{E3}
\begin{cases}
^{\!C}D^{\hat{\alpha}}_{0+}x(t)&=Ax(t)+Bx(t-\tau_1(t))+Ey(t-\tau_2(t))+f(t),\; t\in (0,\infty)\\
y(t)&=Cx(t)+Dy(t-\tau_3(t))+g(t),\; t\in [0,\infty)
\end{cases}
\end{equation}
with the initial condition
\begin{equation}\label{dkd3}
\begin{cases}
x(\cdot)&=\psi(\cdot)\\
y(\cdot)&=\varphi(\cdot)
\end{cases}
\end{equation}
on $[-r,0]$, where $\psi:[-r,0]\rightarrow \R^d,\varphi:[-r,0]\rightarrow \R^n$ are continuous and verify the condition (K).
$C\psi(0)+D\varphi(-\tau_3(0))+g(0)=\varphi(0)$.
\begin{definition}
The system \eqref{E3}-\eqref{dkd3} is globally attractive if for any the initial condition $\psi\in C([-r,0];\R^d),\varphi\in C([-r,0];\R^n)$ which satisfy the compatibility condition (K), then $\lim_{t\to\infty}\|\Phi(t;\psi,\varphi)\|=0$. In the case the system \eqref{E3} is positive, it is globally attractive if for any the initial condition $\psi\in C([-r,0];\R^d_{\geq 0})$, $\varphi\in C([-r,0];\R^n_{\geq 0})$ subjected the compatibility condition (K), we have $\lim_{t\to\infty}\|\Phi(t;\psi,\varphi)\|=0$.
\end{definition}
Our main contribution in this paper is to propose a criterion on the global attractivity of mixed-order positive linear coupled systems with unbounded delays as below.
\begin{theorem} \label{Asy1}
Suppose that $A$ is Metzler, $B,C,D,E$ are nonnegative and the conditions \eqref{c1}-\eqref{d1} are satisfied.
Moreover, we assume that $f\in C([0,\infty);\R_{\geq 0}^d)$, $g\in C([0,\infty);\R_{\geq 0}^n)$ such that $\displaystyle\lim_{t\to\infty}f(t)=0$, $\displaystyle\lim_{t\to\infty}g(t)=0$. Then, the system \eqref{E3}-\eqref{dkd3} is globally attractive if
$A+B+E(I_n-D)^{-1}C$ is Hurwitz.
\end{theorem}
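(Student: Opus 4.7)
The strategy is to exploit the Metzler--Hurwitz structure of $M := A + B + E(I_n-D)^{-1}C$ to build a ``super-equilibrium'' via Lemma \ref{L1}, establish boundedness of $\Phi$, and then sharpen to $\lim\|\Phi(t;\psi,\varphi)\|=0$ through a limsup analysis of the fractional variation-of-constants formula. Since $A$ is Metzler, $B,C,E$ are nonnegative, and $D$ is Schur (because $\|D\|_\infty<1$, so by Lemma \ref{L2} the matrix $(I_n-D)^{-1}$ is nonnegative), the matrix $M$ is Metzler. Lemma \ref{L1}(i) then supplies $\lambda\in\R^d$ with $\lambda\succ 0$ and $M\lambda\prec 0$. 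Setting $\mu:=(I_n-D)^{-1}C\lambda\succeq 0$, we obtain two algebraic identities used repeatedly below: $A\lambda+B\lambda+E\mu\prec 0$ and $C\lambda+D\mu=\mu$.

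Stage 1 (\emph{boundedness of $\Phi$}). Write $A=-\rho I_d+(A+\rho I_d)$ with $\rho>0$ so large that $A+\rho I_d\succeq 0$, and apply the variation-of-constants formula used in the proof of Theorem \ref{D2}, in which $x_i(t)$ involves the nonnegative kernel $(t-s)^{\alpha_i-1}E_{\alpha_i,\alpha_i}(-\rho(t-s)^{\alpha_i})$. Using boundedness of $f,g$ (continuous and tending to $0$) and $(I_n-D)^{-1}\succeq 0$, pick $k>0$ large enough that $(k\lambda,k\mu)$ dominates $(\psi,\varphi)$ on $[-r,0]$ and absorbs $\sup_t\|f\|$, $\sup_t\|g\|$; positivity (apply Theorem \ref{D2} to the difference with the super-solution) forces $x(t)\preceq k\lambda$ and $y(t)\preceq k\mu$ plus a bounded correction, for all $t\ge 0$.

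Stage 2 (\emph{limsup sharpening and conclusion}). Define $L:=\limsup_{t\to\infty}x(t)$ and $\ell:=\limsup_{t\to\infty}y(t)$ componentwise; these are finite by Stage 1. Condition (T4) gives $t-\tau_3(t)\to\infty$, and $g(t)\to 0$, so the difference equation yields $\ell\preceq CL+D\ell$, hence $\ell\preceq(I_n-D)^{-1}CL$. For the $x$-equation, the homogeneous term $E_{\alpha_i}(-\rho t^{\alpha_i})\psi_i(0)$ vanishes as $t\to\infty$; splitting $\int_0^t=\int_0^T+\int_T^t$ with $T$ large and using the mass identity $\int_0^\infty s^{\alpha-1}E_{\alpha,\alpha}(-\rho s^\alpha)\,ds=1/\rho$ together with (T4) and $f\to 0$ produces, after letting $T\to\infty$ and $\varepsilon\to 0$,
\begin{equation*}
\rho L_i \;\le\; \bigl[(A+\rho I_d)L+BL+E\ell\bigr]_i,\qquad i=1,\dots,d,
\end{equation*}
i.e.\ $0\preceq (A+B)L+E\ell\preceq ML$ (using $E\succeq 0$ and the bound on $\ell$). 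By Lemma \ref{L1}(ii), $-M^{-1}\succeq 0$; multiplying $ML\succeq 0$ on the left by $-M^{-1}$ reverses the inequality to $L\preceq 0$, which combined with $L\succeq 0$ gives $L=0$. Hence $x(t)\to 0$, then $\ell\preceq (I_n-D)^{-1}C\cdot 0=0$, so $y(t)\to 0$ as well.

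\emph{Main obstacle.} The crux is justifying the limsup inequality in Stage 2 under only (T4) (delays may be unbounded). The Mittag-Leffler kernel is not compactly supported, so past values of $x,y$ (possibly as large as $k\lambda,k\mu$ from Stage 1) re-enter through the convolution, and a careful split at a large $T$ combined with the mass identity is needed to isolate the contributions of $L_i,\ell_j$ and of the vanishing forcing. Ensuring $\limsup_{s\to\infty}x_j(s-\tau_1(s))\le L_j$ and $\limsup_{s\to\infty}y_k(s-\tau_2(s))\le\ell_k$ for unbounded $\tau_k$ relies crucially on (T4). This quantitative fractional-convolution control is, I expect, the ``new comparison principle for mixed-order delay positive systems'' promised in the introduction, with Lemma \ref{L4} supplying the smoothness tool for differentiating the integral representation should an explicit derivative estimate be required along the way.
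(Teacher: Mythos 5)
Your proposal is correct, but the decisive step is genuinely different from the paper's. The paper splits the problem into two propositions (Proposition \ref{p1}: zero initial data, forcing $f$; Proposition \ref{p2}: initial data and forcing $g$) and, in each, after the same super-equilibrium bound as your Stage 1, runs an \emph{iterative} contraction: it compares the solution with a delay-free auxiliary system, proves that auxiliary solution is strictly decreasing by differentiating its Mittag--Leffler representation (Lemmas \ref{L3}, \ref{L4} plus the Kubica--Ryszewska smoothness result, which is why $f$ is first replaced by a decreasing majorant $\hat f$), computes its limit via the Laplace final-value theorem to extract a factor $q\in(0,1)$, and then propagates $q^m$-decay over windows $[T_m,T_{m+1}]$ using (T4) and a Caputo time-shift estimate with an $O(t^{-\alpha_i})$ tail term. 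Your Stage 2 replaces all of that with a one-shot limsup argument on the variation-of-constants formula: the kernel mass identity $\int_0^\infty u^{\alpha-1}E_{\alpha,\alpha}(-\rho u^\alpha)\,du=1/\rho$, the split $\int_0^t=\int_0^T+\int_T^t$, and (T4) give $0\preceq(A+B)L+E\ell$ and $\ell\preceq(I_n-D)^{-1}CL$, whence $ML\succeq 0$ and Lemma \ref{L1}(ii) forces $L=0$. I checked the details and they close: $h_i\succeq 0$, the limsup of the delayed terms is controlled by (T4), and $-M^{-1}\succeq 0$ reverses $ML\succeq 0$ to $L\preceq 0$. Your route is shorter and more robust -- it needs no differentiability of auxiliary solutions, no decreasing majorant of $f$, no final-value theorem, and no case split on zero rows of $C$ (the paper's ``Case 2''), and it handles $f$, $g$ and the initial data simultaneously. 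What the paper's heavier machinery buys is quantitative information -- explicit geometric decay $q^m$ on explicit time windows -- which is the comparison apparatus reused later for Theorem \ref{best}; your limsup argument yields attractivity but no rate. One cosmetic caveat: your guess that the limsup estimate is the paper's advertised ``new comparison principle'' is off -- the paper's principle is the iterative one -- but that does not affect the validity of your proof.
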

The essential idea in the proof of Theorem \ref{Asy1} is applying a specific characteristic of fractional order differentiable functions obtained in the works of G. Vainikko \cite{Vainikko_16}, and A. Kubica and K. Ryszewska \cite{Kubica} together with skillful use of arguments comparing the solutions to fractional-order positive systems.
Before going into the detailed proof of the theorem, we need the following preparatory results.

Consider the following system
\begin{equation} \label{M1}
\begin{cases}
^{\!C}D^{\hat{\alpha}}_{0+}x(t)&=Ax(t)+Bx(t-\tau_1(t))+Ey(t-\tau_2(t))+f(t),\; t\in (0,\infty),\\
y(t)&=Cx(t)+Dy(t-\tau_3(t)),\; t\in [0,\infty),\\
x(t)&=0,\; y(t)=0,\;t\in [-r,0].
\end{cases}
\end{equation}
\begin{proposition}\label{p1}
Suppose that $A$ is Metzler, $B,C,D,E$ are nonnegative and the conditions \eqref{c1}-\eqref{d1} are satisfied.
Moreover, we assume that $f\in C([0,\infty);\R_{\geq 0}^d)$ and $\displaystyle\lim_{t\to\infty}f(t)=0$. Then, if $A+B+E(I_n-D)^{-1}C$ is Hurwitz, the solution $\Phi^1(\cdot;0,0)=(x^1(\cdot;0,0),y^1(\cdot;0,0))^{\rm T}$ of \eqref{M1} converges to 0 as $t\to\infty$, that is, $$\displaystyle\lim_{t\to\infty}x^1(t;0,0)=0,\  \displaystyle\lim_{t\to\infty}y^1(t;0,0)=0.$$
\end{proposition}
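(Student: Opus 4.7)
My plan is to use the Metzler/Hurwitz structure of $M := A + B + E(I_n-D)^{-1}C$ to set up a comparison argument, reduce the conclusion to componentwise $\limsup$ inequalities for $x^1$ and $y^1$, and close the argument via the fractional-smoothness property flagged in the theorem statement. Since $A$ is Metzler and $B,C,D,E$ and $(I_n-D)^{-1}$ are nonnegative (the last by Lemma \ref{L2}), the matrix $M$ is Metzler; combined with the Hurwitz hypothesis, Lemma \ref{L1} yields $\lambda \succ 0$ with $M\lambda \prec 0$. Setting $\mu := (I_n-D)^{-1}C\lambda \succeq 0$, one has $A\lambda + B\lambda + E\mu = M\lambda \prec 0$ and $C\lambda + D\mu = \mu$; these two identities are the backbone of everything that follows.

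To establish boundedness, note that $f$ is continuous with $f(t) \to 0$, hence bounded, say $f(t) \preceq F{\bf 1}_d$ for some $F > 0$. Pick $K$ so large that $K M\lambda + F{\bf 1}_d \preceq 0$, and set $\tilde x := K\lambda - x^1$, $\tilde y := K\mu - y^1$. A direct substitution shows that $(\tilde x, \tilde y)$ solves \eqref{M1} with $f$ replaced by $\tilde f := -KM\lambda - f \succeq 0$, with zero source term in the $y$-equation, and with strictly positive constant history $(K\lambda, K\mu)$ on $[-r,0]$. Applying Theorem \ref{D2} to this error system gives $\tilde x, \tilde y \succeq 0$ on $[0,\infty)$, so $0 \preceq x^1(t) \preceq K\lambda$ and $0 \preceq y^1(t) \preceq K\mu$. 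Hence $\bar x_i := \limsup_{t\to\infty} x^1_i(t)$ and $\bar y_j := \limsup_{t\to\infty} y^1_j(t)$ are finite and nonnegative. From $y^1(t) = Cx^1(t) + Dy^1(t - \tau_3(t))$, condition (T4), and $C, D \succeq 0$, passing to componentwise $\limsup$ yields $\bar y \preceq C\bar x + D\bar y$, hence $\bar y \preceq (I_n - D)^{-1} C\bar x$ by Lemma \ref{L2}.

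The heart of the proof is the companion inequality $A\bar x + B\bar x + E\bar y \succeq 0$. Fix $i$ and choose $t_n^i \to \infty$ with $x^1_i(t_n^i) \to \bar x_i$. The smoothness property of Caputo derivatives of bounded functions due to Vainikko \cite{Vainikko_16} and Kubica and Ryszewska \cite{Kubica} should guarantee, after extracting a subsequence, that $\liminf_n \,{}^{C}\!D^{\alpha_i}_{0^+} x^1_i(t_n^i) \geq 0$. Evaluating the first equation of \eqref{M1} at $t_n^i$ and taking $\limsup$, using $a_{ij} \geq 0$ for $j \neq i$, the nonnegativity of $B$ and $E$, $f(t_n^i) \to 0$, and $t_n^i - \tau_k(t_n^i) \to \infty$ (for the delayed terms), I obtain $0 \leq (A\bar x + B\bar x + E\bar y)_i$ for every $i$, i.e. $A\bar x + B\bar x + E\bar y \succeq 0$. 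Combining with $\bar y \preceq (I_n-D)^{-1}C\bar x$ and $E \succeq 0$, this yields $M\bar x \succeq A\bar x + B\bar x + E\bar y \succeq 0$. Since $M$ is Metzler and Hurwitz, $-M^{-1} \succeq 0$ (Lemma \ref{L1}), so $-\bar x = (-M^{-1})(M\bar x) \succeq 0$; together with $\bar x \succeq 0$ this forces $\bar x = 0$, whence $x^1(t) \to 0$. Then $0 \preceq \bar y \preceq (I_n-D)^{-1}C\bar x = 0$, so $\bar y = 0$ and $y^1(t) \to 0$.

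The main obstacle is the fractional-smoothness step: justifying $\liminf_n \, {}^{C}\!D^{\alpha_i}_{0^+} x^1_i(t_n^i) \geq 0$ at sequences realizing the $\limsup$, under only boundedness of $x^1$. This is not an elementary calculus fact and requires the careful fine structure of Caputo-differentiable functions extracted from \cite{Vainikko_16, Kubica}. Once this ingredient is secured, every other ingredient (boundedness, the algebraic $\limsup$ bound for $\bar y$, and the final Metzler/Hurwitz squeeze via $-M^{-1} \succeq 0$) is a routine comparison argument for positive systems.
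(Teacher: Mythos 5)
Your overall architecture (bound the solution, pass to componentwise $\limsup$, and squeeze with $-M^{-1}\succeq 0$ for the Metzler--Hurwitz matrix $M=A+B+E(I_n-D)^{-1}C$) is a genuinely different route from the paper, and most of its pieces are sound: the boundedness step via the error system $(\tilde x,\tilde y)=(K\lambda-x^1,K\mu-y^1)$ is correct (the compatibility condition and the identities $(A+B)K\lambda+EK\mu=KM\lambda$, $(I_n-D)\mu=C\lambda$ all check out), and the algebraic reductions $\bar y\preceq(I_n-D)^{-1}C\bar x$ and $M\bar x\succeq 0\Rightarrow\bar x=0$ are fine. But the step you yourself flag as the ``main obstacle'' is a genuine gap, not a routine citation: the claim that along a sequence $t_n^i\to\infty$ realizing $\limsup_t x^1_i(t)$ one has $\liminf_n{}^{C}\!D^{\alpha_i}_{0^+}x^1_i(t_n^i)\ge 0$ does not follow from Vainikko or Kubica--Ryszewska, whose results concern which functions are fractionally differentiable and the regularity $x\in C^1(0,T]$ with $\lim_{t\to 0^+}t^{1-\alpha}\dot x(t)$ existing --- they say nothing about the sign of the Caputo derivative at points near the $\limsup$ at infinity. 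Because the Caputo derivative is nonlocal and its value at $t_n$ is dominated by the recent behavior of the function, a bounded function can be within $\varepsilon$ of its $\limsup$ at $t_n$ while decreasing steeply there, making ${}^{C}\!D^{\alpha}x(t_n)$ arbitrarily negative; the fractional extremum principle only gives nonnegativity at points that are \emph{running maxima} on $[0,t_n]$, and such points realizing the $\limsup$ need not exist when the global supremum is attained early. This is precisely the kind of ``fluctuation lemma at infinity'' whose uncritical use in earlier work is what the authors say they are \emph{correcting} (see their remark on \cite[Theorem~2]{ShenLam_16} and \cite[Theorem~1]{TTL_21}), so leaving it as a black box is not acceptable here.

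The paper sidesteps this issue entirely by a constructive monotone-iteration scheme: it replaces $f$ by a decreasing majorant $\hat f$, compares with an undelayed auxiliary system whose solution $\overline{x}^1$ it proves is \emph{strictly decreasing} (via the Mittag--Leffler integral representation, Lemmas \ref{L3}--\ref{L4}, and the regularity $\lim_{t\to 0^+}t^{1-\alpha_i}\dot{\overline{x}}^1_i(t)$ from \cite{Kubica}), identifies its limit $-A^{-1}(B\lambda^1+E\xi^1)\prec q\lambda^1$ by the Laplace final value theorem, and then iterates time-shifted comparisons to get $\hat x^1(t)\prec q^m\lambda^1$ on $[T_m,T_{m+1}]$. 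If you want to salvage your shorter route, you would need to state and prove an actual fractional fluctuation lemma (with the delayed and forcing terms handled uniformly), which is a substantial piece of analysis in its own right; as written, the proof is incomplete at its central step.
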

\begin{proof}
{\bf{Case 1:}} The matrix $C$ satisfies the condition 
\[\sum_{j=1}^dc_{ij}>0,\quad\forall i=\overline{1,n}.\] Due to the fact that $f\in C([0,\infty);\R^d_{\geq 0})$ and $\displaystyle\lim_{t\to\infty}f(t)=0$, we can find a function $\hat{f}(\cdot)\in C([0,\infty);\R^d_{\geq 0})$ such that
\begin{itemize}
\item $\hat{f}(\cdot)$ is decreasing,
\item ${\dot{\hat f}}(\cdot)$ is a piecewise continuous function,
\item $\hat{f}(t)\succeq f(t)$ for all $t\geq 0$,
\item $\displaystyle\lim_{t\to \infty}\hat{f}(t)=0$.
\end{itemize}
Consider the system
\begin{equation} \label{M11}
\begin{cases}
^{\!C}D^{\hat{\alpha}}_{0+}x(t)&=Ax(t)+Bx(t-\tau_1(t))+Ey(t-\tau_2(t))+\hat{f}(t),\; t\in (0,\infty),\\
y(t)&=Cx(t)+Dy(t-\tau_3(t)),\; t\in [0,\infty),\\
x(t)&=0,\quad y(t)=0,\quad t\in [-r,0],
\end{cases}
\end{equation}
and denote its solution by $(\hat{x}^1(\cdot;0,0),\hat{y}^1(\cdot,0,0))^{\rm T}$. Then, by the comparison principle as in \cite[Proposition 2]{GCM_20}, $x^1(\cdot;0,0)\preceq \hat{x}^1(\cdot;0,0)$, $y^1(\cdot;0,0)\preceq \hat{y}^1(\cdot;0,0)$ on $[0,\infty)$. The proof of the proposition is completed by showing $$\lim_{t\to\infty}\hat{x}^1(t;0,0)=0,\;\lim_{t\to\infty}\hat{y}^1(t;0,0)=0.$$
We do this by using the following observations.

{\bf Step 1:} Choose $\lambda^1\in \R^d_{\ge0}$ large enough such that $$(A+B+E(I_n-D)^{-1}C)\lambda^1 +E(I_n-D)^{-1}\epsilon {\bf 1}_n+\hat{f}(0)\prec 0$$ 
and define $\xi^1:=(I_n-D)^{-1}(C\lambda^1+\varepsilon {\bf 1}_n)$ for some positive constant $\varepsilon$. In this step, we will prove that $\hat{x}^1(\cdot;0,0)\preceq \lambda^1$ and $\hat{y}^1(\cdot;0,0)\preceq \xi^1$ on $[0,\infty)$. To do this, we set $x(t)=\lambda^1-\hat{x}^1(t;0,0)$, $y(t)=\xi^1-\hat{y}^1(t;0,0)$, $t\in [-r,\infty)$. Then, $x(t)=\lambda^1$, $y(t)=\xi^1$ on $[-r,0]$, and
\begin{align*}
^{\!C}D^{\hat{\alpha}}_{0+}x(t)&=-^{\!C}D^{\hat{\alpha}}_{0+}\hat{x}^1(t;0,0)\\
&=-\left(A\hat{x}^1(t;0,0)+B\hat{x}^1(t-\tau_1(t);0,0)+E\hat{y}^1(t-\tau_2(t);0,0)+\hat{f}(t)\right)\\
&=Ax(t)+B x(t-\tau_1(t))+E y(t-\tau_2(t))
-\left((A+B)\lambda^1+E\xi^1+\hat{f}(t)\right),\;t>0,\\
y(t)&=\xi^1 -\hat{y}^1(t;0,0)\\
&=\xi^1 -(C\hat{x}^1(t)+D\hat{ y}^1(t-\tau_3(t);0,0))\\
&=Cx(t)+Dy(t-\tau_3(t))+(\xi^1-C\lambda^1-D\xi^1)\\
&=Cx(t)+Dy(t-\tau_3(t))+\varepsilon{\bf 1}_n,\; t\geq 0.
\end{align*}
Notice that $(A+B)\lambda^1+E\xi^1+\hat{f}(t)\preceq 0$, the system above is positive and thus $x(t)\succeq 0$, $y(t)\succeq 0$ for all $t\geq 0$. This implies that $\hat{x}^1(t;0,0)\preceq \lambda^1$, $\hat{y}^1(t;0,0)\preceq \xi^1$ on $[0,\infty)$.

{\bf Step 2:} We will show that there are $t_1>0$ and $q \in (0,1)$ so that
\begin{equation}\label{e1}
\hat{x}^1(t;0,0)\prec q\lambda^1,\quad \hat{y}^1(t;0,0)\prec q\xi^1,\; \forall t\geq t_1.
\end{equation}
Denote by $\overline{x}^1(\cdot),\overline{y}^1(\cdot)$ the unique solution to the following system
\begin{equation} \label{B1}
\begin{cases}
^{\!C}D^{\hat{\alpha}}_{0+}{x}(t)&=A{x}(t)+B\lambda^1+E\xi^1+\hat{f}(t),\quad\forall t> 0\\
{y}(t)&=C{x}(t)+D\xi^1+\varepsilon {\bf 1}_n,\quad\forall t\ge 0
\end{cases}
\end{equation}
with the initial condition ${x}(0)=\lambda^1,{y}(0)=\xi^1.$ By the comparison principle, it is easy to see that
\begin{align}
\label{E6}
0\preceq \hat{x}^1(t;0,0)\preceq\overline{x}^1(t)\preceq \lambda^1,\quad 0\preceq \hat{y}^1(t;0,0)\preceq\overline{y}^1(t)\preceq\xi^1, t\geq 0.
\end{align}
Since $A$ is Metzler, there exists $\rho>0$ so that $\rho I_d+A \succeq 0$. Then,
\[
^{\!C}D^{\hat{\alpha}}_{0+}\overline{x}^1(t)=-\rho I_d\overline{x}^1(t)+(\rho I_d+A)\overline{x}^1(t)+B\lambda^1+E\xi^1+\hat{f}(t),\;t>0,
\]
and this solution thus has the following representation
\begin{align*}
\overline{x}_i^1(t)&=E_{\alpha_i}(-\rho t^{\alpha_i})\lambda_i^1 +\int_0^t (t-s)^{\alpha_i-1}E_{\alpha_i,\alpha_i}(-\rho(t-s)^{\alpha_i})\left(\sum_{k=1}^ne_{ik}\xi_k^1 +\hat{f}_i(s)\right)ds \\
&\hspace{1cm}+\int_0^t (t-s)^{\alpha_i-1}E_{\alpha_i,\alpha_i}(-\rho(t-s)^{\alpha_i})\left( \sum_{j=1}^d
(a_{ij}
+\rho \delta_{ij})\overline{x}_j^1(s)+\sum_{j=1}^db_{ij}\lambda_j^1\right) ds
\end{align*}
with all $t\ge0$, $1\leq i\leq d$.
According to Lemma \ref{L3} and Lemma \ref{L4}, for $t>0, 1\leq i\leq d$, we have
\begin{align}
\dot{\overline{x}}_i^1(t)&= -\rho t^{\alpha_i-1}\lambda_i^1E_{\alpha_i,\alpha_i}(-\rho t^{\alpha_i})+t^{\alpha_i-1}E_{\alpha_i,\alpha_i}(-\rho t^{\alpha_i})\sum_{k=1}^ne_{ik}\xi_k^1\notag\\
&\hspace{2cm}+t^{\alpha_i-1}E_{\alpha_i,\alpha_i}(-\rho t^{\alpha_i})\left( \sum_{j=1}^d
(a_{ij}
+\rho \delta_{ij})\lambda_j^1+\sum_{j=1}^db_{ij}\lambda_j^1+\hat{f}_i(0)\right)\notag\\
&\hspace{2cm}+\int_0^t (t-s)^{\alpha_i-1}E_{\alpha_i,\alpha_i}(-\rho(t-s)^{\alpha_i}) \left(\sum_{j=1}^d(a_{ij}
+\rho \delta_{ij}) \dot{\overline{x}}_j^1(s)+\dot{\hat{f}}_i(s)\right)ds\notag\\
&=t^{\alpha_i-1}E_{\alpha_i,\alpha_i}(-\rho t^{\alpha_i})\left( \sum_{j=1}^da_{ij}
\lambda_j^1+\sum_{j=1}^db_{ij}\lambda_j^1+\sum_{k=1}^ne_{ik}\xi_k^1+\hat{f}_i(0)\right)\notag\\
&\hspace{2cm}+\int_0^t (t-s)^{\alpha_i-1}E_{\alpha_i,\alpha_i}(-\rho(t-s)^{\alpha_i}) \sum_{j=1}^d(a_{ij}
+\rho \delta_{ij}) \dot{\overline{x}}_j^1(s)ds\notag\\
&\hspace{2cm}+\int_0^t (t-s)^{\alpha_i-1}E_{\alpha_i,\alpha_i}(-\rho(t-s)^{\alpha_i})\dot{\hat{f}}_i(s)ds.\label{e2}
\end{align}
Furthermore, based on the arguments as in \cite[Lemma 11]{Kubica} (see also \cite[Theorem 2.1.2]{Brunner}), for $1\leq i\leq d$, it is worth to point out that $\overline{x}^1_i\in C^1(0,T]$ and the limit $\displaystyle\lim_{t\to 0+}t^{1-\alpha_i}\dot{\overline{x}}^1_i(t)$ exists which together \eqref{e2} and the fact
$$\int_0^t (t-s)^{\alpha_i-1}E_{\alpha_i,\alpha_i}(-\rho(t-s)^{\alpha_i})\dot{\hat{f}}_i(s)ds<0,\;t>0,$$
implies that
\begin{equation}\label{e3}
\lim_{t\to 0+}\dot{\overline{x}}^1_i(t)=-\infty.
\end{equation}
We now assume that there exists $t>0$ and $i\in \{1,\cdots,d\}$ such that $\dot{\overline{x}}^1_i(t)=0$. Set $t_0=\inf\{t>0: \dot{\overline{x}}^1_i(t)=0\;\text{for some}\; 1\leq i\leq d\}$. Then, $t_0>0$ and there is $i_0\in \{1,\cdots,d\}$ so that $\dot{\overline{x}}^1_{i_0}(t)<0$ on $(0,t_0)$, $\dot{\overline{x}}^1_{i_0}(t_0)=0$ and $\dot{\overline{x}}^1_{j}(t)\leq 0$ on $(0,t_0]$ for all $1\leq j\leq d$, $j\ne i_0$. However, in this case, we have
\begin{align*}
\dot{\overline{x}}^1_{i_0}(t_0)&=t^{\alpha_{i_0}-1}_0E_{\alpha_{i_0},\alpha_{i_0}}(-\rho t^{\alpha_{i_0}}_0)\left( \sum_{j=1}^da_{i_0j}\lambda^1_j+\sum_{j=1}^db_{i_0j}\lambda^1_j+\sum_{k=1}^ne_{i_0k}\xi^1_k+\hat{f}_{i_0}(0)\right)\\
&\hspace{2cm}+\int_0^{t_0} (t_0-s)^{\alpha_{i_0}-1}E_{\alpha_{i_0},\alpha_{i_0}}(-\rho(t_0-s)^{\alpha_{i_0}}) \sum_{j=1}^d(a_{i_0j}+\rho \delta_{i_0j}) \dot{\overline{x}}^1_j(s)ds\\
&\hspace{2cm}+\int_0^{t_0} (t_0-s)^{\alpha_{i_0}-1}E_{\alpha_{i_0},\alpha_{i_0}}(-\rho(t_0-s)^{\alpha_{i_0}})\dot{\hat{f}}_{i_0}(s)ds<0,
\end{align*}
a contradiction. Thus, $ \dot{\overline{x}}^1(t) \prec 0$ for all $t>0$ which leads to that ${\overline{x}}^1$ is strictly decreasing on $[0,\infty)$. Noting that $0 \preceq \overline{x}^1(t)\preceq \lambda^1,\ \forall t\geqslant0$. Hence, the following limit $ \displaystyle \lim_{t \to\infty}\overline{x}^1(t)=x^*$ exists. Let $\mathcal{L}$ be the Laplace transform. By the final value theorem (see, e.g., \cite[The Final Value Theorem, p. 20]{Graf}), we obtain
\begin{align*}
\lim_{s\to +0}s\mathcal{L}\{^{C}D^{\hat{\alpha}}_{0+}\overline{x}^1(\cdot)\}=\lim_{t\to\infty} {^{C}D^{\hat{\alpha}}_{0+}\overline{x}^1(t)}
=\lim_{t\to\infty}(A\overline{x}^1(t)+B\lambda^1+E\xi^1)
=Ax^*+B\lambda^1+E\xi^1.
\end{align*}
Moreover, based the arguments as in the proof of \cite[Lemma 3.1]{Cong_Tuan_17}, then
\[\mathcal{L}\{^{C}D^{\hat{\alpha}}_{0+}\overline{x}^1_i(\cdot)\}(s)=s^{\alpha_i} \mathcal{L}\{\overline{x}^1_i(\cdot)\}(s)-s^{\alpha_i-1}\lambda^1_i,\;i=1,\dots,d.\]
Hence,
\begin{align*}
\lim_{s\to 0^+}s\mathcal{L}\{^{C}D^{\hat{\alpha}}_{0+}\overline{x}^1(\cdot)\}&=\lim_{s\to 0^+}s[s^{\alpha_1} \mathcal{L}\{\overline{x}^1_1(\cdot)\}(s)-s^{\alpha_1-1}\lambda^1_1,\dots,s^{\alpha_d} \mathcal{L}\{\overline{x}^1_d(\cdot)\}(s)-s^{\alpha_d-1}\lambda^1_d]\\
&=\lim_{s\to 0^+}[s^{\alpha_1}(s \mathcal{L}\{\overline{x}^1_1(\cdot)\}(s)-\lambda^1_1),
\dots,s^{\alpha_d}(s \mathcal{L}\{\overline{x}^1_d(\cdot)\}(s)-\lambda^1_d)]\\
&=0
\end{align*}
thanks to
\[
\lim_{s\to 0^+}s \mathcal{L}\{\overline{x}^1_j(\cdot)\}(s)=\lim_{t\to \infty}\overline{x}^1_j(t)=x^*_j,\ 1\leq j\leq d.
\]
This leads to
\begin{align*}
\lim_{t\to\infty}\overline{x}^1(t)&=-A^{-1}(B\lambda^1+E\xi^1):={\hat\lambda}^1\prec \lambda^1,\\ 
\lim_{t\to\infty}\overline{y}^1(t)&=C{\hat\lambda}^1+D\xi^1+\varepsilon 1_n\prec \xi^1. 
\end{align*}
Fix $q\in (0,1)$ be the constant with ${\hat\lambda}^1\prec q\lambda^1$, $C{\hat\lambda}^1+D\xi^1+\varepsilon {\bf 1}_n\prec q\xi^1$ which together the fact that $\overline{x}^1(\cdot)$ is strictly decreasing on $[0,\infty)$ shows that there is $t_1>0$ so that
$$\hat{x}^1(t;0,0)\preceq \overline{x}^1(t)\prec q\lambda^1,\quad \hat{y}^1(t;0,0)\preceq \overline{y}^1(t)\prec q\xi^1, \quad\forall t\geq t_1.$$

{\bf Step 3:} In this step, we will establish a increasing sequence $\{T_m\}_{m=1}^\infty$ with $ \displaystyle \lim_{m\to\infty}T_m=\infty$ which satisfies 
	\begin{align} \label{B2-2}
		\hat{x}^1(t;0,0)\prec q^m\lambda^1,\quad \hat{y}^1(t;0,0)\prec q^m\xi^1,\quad \forall t\in [T_m,T_{m+1}].
	\end{align}
Since $\displaystyle \lim_{t\to \infty}(t-\tau_k(t))=\infty,k=1,2,3$, there is $\hat{t}_1>t_1$ such that $t-\tau_k(t)>t_1$ for all $t>\hat{t}_1.$ Let $T_1=\hat{t}_1>t_1$, then
	\begin{align*}
		&\hat{x}^1(t;0,0)\preceq \lambda^1,\quad \hat{y}^1(t;0,0)\preceq \xi^1,\quad \forall t\in [0,T_{1}],\\
		&\hat{x}^1(t;0,0)\prec q\lambda^1,\quad \hat{y}^1(t;0,0)\prec q\xi^1,\quad \forall t\ge T_1.
	\end{align*}
Set $\tilde{x}^1(t)=\hat{x}^1(t+T_1;0,0),\tilde{y}^1(t)=\hat{y}^1(t+T_1;0,0),\; t\geq0$. Due to the fact that $$\hat{x}^1(t+T_1;0,0)\prec  q\lambda^1,\   \hat{y}^1(t+T_1;0,0) \prec q\xi^1$$ for all $t \geq 0$,  for each $i=1,\dots,d$, we see that 
		\begin{align*}
		^{\!C}D^{\alpha_i}_{0+}\tilde{x}^1_i(t)&=(^{\!C}D^{\alpha_i}_{0+}\hat{x}^1_{i}(. +T_1;0,0))(t) =\frac {1}{\Gamma(1-\alpha_i)}\int_{0}^{t
		} (t-s)^{-\alpha_i} \dot{\hat{x}}^1_{i}(s+T_1;0,0)ds\\
		&=\frac {1}{\Gamma(1-\alpha_i)}\int_{T_1}^{t+T_1
		} (t+T_1-u)^{-\alpha_i} \dot {\hat{x}}^1_{i}(u;0,0)du\\
		&=\frac {1}{\Gamma(1-\alpha_i)}\int_{0}^{t+T_1
		} (t+T_1-u)^{-\alpha_i} \dot {\hat{x}}^1_{i}(u;0,0)du\\
		&\hspace{1.5cm} - \frac {1}{\Gamma(1-\alpha_i)}\int_{0}^{T_1
		} (t+T_1-u)^{-\alpha_i} \dot {\hat{x}}^1_{i}(u;0,0)du\\
		&=(^{\!C}D^{\alpha_i}_{0+}\hat{x}^1_{i}(\cdot;0,0))(t+T_1)-\frac {1}{\Gamma(1-\alpha_i)}(t+T_1-u)^{-\alpha_i}\hat{x}^1_{i}(u;0,0)|_0^{T_1} \\
		&\hspace{1.5cm}+ \frac {1}{\Gamma(1-\alpha_i)} \int_{0}^{T_1
		} \alpha_i(t+T_1-u)^{-\alpha_i-1} \hat{x}^1_{i}(u;0,0)du\\
		&=\sum_{j=1}^da_{ij}\hat{x}^1_{j}(t+T_1;0,0)+\sum_{j=1}^db_{ij}\hat{x}^1_{j}(t+T_1-\tau_1(t+T_1);0,0)\\
		&\hspace{1.5cm}+\hat{f}_i(t+T_1)+\sum_{k=1}^ne_{ik}\hat{y}^1_{k}(t+T_1-\tau_2(t+T_1);0,0)\\
		&\hspace{1.5cm}-\frac {1}{\Gamma(1-\alpha_i)}\bigg(\frac {\hat{x}^1_i(T_1;0,0)}{t^{\alpha_i}} -\frac {\hat{x}^1_i(0;0,0)}{(t+T_1)^{\alpha_i}}-\int_{0}^{T_1
		} \frac {\alpha_i \hat{x}^1_{i}(u;0,0)}{(t+T_1-u)^{\alpha_i+1}} du \bigg) \\
		&\le\sum_{j=1}^da_{ij}\tilde{x}^1_j(t)+\sum_{j=1}^db_{ij}q\lambda^1_j+\sum_{k=1}^ne_{ik}q\xi^1_k+\hat{f}_i(t+T_1)\\
		&\hspace{1.5cm}+\frac {1}{\Gamma(1-\alpha_i)}\int_{0}^{T_1
		} \frac {\alpha_i\hat{x}^1_i(u;0,0)}{(t+T_1-u)^{\alpha_i+1}} du \\
		&\le\sum_{j=1}^da_{ij}\tilde{x}^1_j(t)+\sum_{j=1}^db_{ij}q\lambda^1_j+\sum_{k=1}^ne_{ik}q\xi^1_k +\hat{f}_i(t+T_1)\\
		&\hspace{1.5cm}+\frac {1}{\Gamma(1-\alpha_i)}\int_{0}^{T_1
		} \frac {\alpha_i \lambda^1_i}{(t+T_1-u)^{\alpha_i+1}}du\\
		&=\sum_{j=1}^da_{ij}\tilde{x}^1_j(t)+\sum_{j=1}^db_{ij}q\lambda^1_j+\sum_{k=1}^ne_{ik}q\xi^1_k+\hat{f}_i(t+T_1)\\
		&\hspace{1.5cm}+\frac {\lambda_i}{\Gamma(1-\alpha_i)} (t+T_1-u)^{-\alpha_i}|_0^{T_1} \\
		&\le\sum_{j=1}^da_{ij}\tilde{x}^1_j(t)+\sum_{j=1}^db_{ij}q\lambda^1_j+\sum_{k=1}^ne_{ik}q\xi^1_k+\hat{f}_i(t+T_1)+\frac {\lambda^1_i}{\Gamma(1-\alpha_i)t^{\alpha_i}}.
	\end{align*}
Put 
\begin{equation*}
	a_i(t)=\begin{cases}		0\ &\text{if}\ t\in [0,1],\\ \frac{\lambda^1_i}{\Gamma(1-\alpha_i)t^{\alpha_i}}\ &\text{if}\ t> 1,	
\end{cases}
\text{and} \; \tilde{f}_i(t)=\hat{f}_i(t+T_1),\ t\ge 0
\end{equation*}	
with each $i=1,\dots,d,$
which implies that
	\[^{\!C}D^{\alpha_i}_{0+}\tilde{x}^1_i(t)\le\sum_{j=1}^da_{ij}\tilde{x}^1_j(t)+\sum_{j=1}^db_{ij}q\lambda^1_j+\sum_{k=1}^ne_{ik}q\xi^1_k+\tilde{f}_i(t)+a_i(t),\]
	here $i=1,\dots,d$ for all $t\ge 1$.
	Thus, we have
	$$^{\!C}D^{\hat{\alpha}}_{0+}\tilde{x}^1(t)\preceq A\tilde{x}^1(t)+qB\lambda^1+qE\xi^1+\tilde{f}(t)+a(t),\quad \forall t>1,$$
	where $a(t)=\left(a_1(t),\dots,a_d(t)\right)^T,\; \tilde{f}(t)=\left(\tilde{f}_1(t),\dots,\tilde{f}_d(t)\right)^T, \; t>0$.
	On the other hand,
	$$\tilde y^1(t)\preceq C\tilde x^1(t)+qD\xi^1+q\varepsilon {\bf 1}_n,\quad \forall t\geq 0.$$
	Let $\tilde{\tilde x}^1(\cdot),\tilde{\tilde y}^1(\cdot)$ be the solution to the following system 
	\begin{equation}\label{systxnga1}
		\begin{cases}
			^{\!C}D^{\hat{\alpha}}_{0+} x(t)=A x(t)+qB\lambda^1+qE\xi^1+\tilde{f}(t)+a(t),\quad \forall t>0,\\
			y(t)=Cx(t)+qD\xi^1+q\varepsilon {\bf 1}_n,\quad \forall t\ge0,
		\end{cases}
	\end{equation}
with  $\tilde{\tilde x}^1_i(0)=\tilde{\lambda}_i^1>\displaystyle\max_{t\in[0,1]} \frac{\tilde{x}^1_i(t)}{E_{\alpha_i}(-\rho t^\alpha_i)},\; i=1,\dots,d$, such that 
\[A\tilde{\lambda}^1+B\lambda^1+E\xi^1+\hat{f}(0)\prec0,\]
where $\tilde{\lambda}^1=(\tilde{\lambda}_1^1,\dots,\tilde{\lambda}_d^1)^T$, and
$\tilde{\tilde y}^1(0)=C\tilde{\tilde x}^1(0)+qD\xi^1+q\varepsilon {\bf 1}_n.$

For $t\in [0,1]$ and for each $i=1,\dots,d$, we have
\begin{align*}
	\tilde{\tilde x}^1_i(t)&=E_{\alpha_i}(-\rho t^{\alpha_i})\tilde{\tilde x}^1_i(0)+\int_0^t (t-s)^{\alpha_i-1}E_{\alpha_i,\alpha_i}(-\rho(t-s)^{\alpha_i}) \sum_{j=1}^d
	(a_{ij}
	+\rho \delta_{ij})\tilde{\tilde x}^1_j(s)ds\notag\\
	&\hspace{1cm}+\int_0^t (t-s)^{\alpha_i-1}E_{\alpha_i,\alpha_i}(-\rho(t-s)^{\alpha_i})\big(q\sum_{j=1}^db_{ij}\lambda^1_j+q\sum_{k=1}^ne_{ik}\xi^1_k +\tilde{f}_i(s)+a_i(s)\big)ds \\
	&\ge E_{\alpha_i}(-\rho t^{\alpha_i})\tilde{\tilde x}^1_i(0)> \tilde{x}^1_i(t).
\end{align*}
Suppose, ad absurdum, there is $t>1$ and $i\in\{1,\dots,d\}$ such that $\tilde{x}^1_i(t)=\tilde{\tilde x}^1_i(t)$. Taking $t_*=\inf\{t>0: \tilde{x}^1_i(t)=\tilde{\tilde x}^1_i(t)\;\text{for some}\; 1\leq i\leq d\}$. Then, $t_*>1$ and there exists $i^*\in \{1,\dots,d\}$ such that $\tilde{x}^1_{i^*}(t)<\tilde {\tilde x}^1_{i^*}(t)$ on $[0,t^*)$, $\tilde{x}^1_{i^*}(t^*)=\tilde{\tilde x}^1_{i^*}(t^*)$ and $\tilde{x}^1_{j}(t)\leq \tilde{\tilde x}^1_{j}(t)$ on $[0,t^*]$ for $j\in\{1,\dots,d\}$, $j\ne i^*$.

On the other hand, it is worth noting that $a_{ii}<0$ for all $1\leq i\leq d$ (thanks to the fact that $A$ is Metzler and Hurwitz) and
\begin{align*}
^{\!C}D^{\alpha_{i^*}}_{0+}\tilde{x}^1_{i^*}(t_*) & \leq a_{i^*i^*}\tilde{x}^1_{i^*}(t_*)+\sum_{j=1,j\ne i^*}^da_{i^*j}\tilde{x}^1_j(t_*)+\sum_{j=1}^db_{i^*j}q\lambda^1_j+\sum_{k=1}^ne_{i^*k}q\xi^1_k\\
&\hspace{1.5cm}+\hat{f}_{i^*}(t_*)+\frac {\lambda_{i^*}}{\Gamma(1-\alpha_{i^*})t_*^{\alpha_{i^*}}}\\
& \leq a_{i^*i^*}\tilde{\tilde{x}}^1_{i^*}(t_*)+\sum_{j=1,j\ne i^*}^da_{i^*j} \tilde{\tilde x}^1_j(t_*)+\sum_{j=1}^db_{i^*j}q\lambda^1_j+\sum_{k=1}^ne_{i^*k}q\xi^1_k\\
&\hspace{1.5cm}+\hat{f}_{i^*}(t_*)+\frac {\lambda_{i^*}}{\Gamma(1-\alpha_{i^*})t_*^{\alpha_{i^*}}} \\
&=^{\!C}D^{\alpha_{i^*}}_{0+}\tilde{\tilde{x}}^1_{i^*}(t_*).
\end{align*}
Thus, by using the comparison principle (see, e.g. \cite[Proposition 26]{TuanC_20}), we obtain that $\tilde{x}^1_{i^*}(t^*)<\tilde{\tilde x}^1_{i^*}(t^*)$, a contradiction. Hence $\tilde x^1(t)\preceq \tilde{\tilde x}^1(t),\;t\geq0$.

We now turn to the system \eqref{systxnga1}.  As shown above, in the same manner, we obtain the following representation of the solution to this system 
\begin{align*}
\tilde{\tilde x}^1_i(t)&=E_{\alpha_i}(-\rho t^{\alpha_i})\tilde{\tilde x}^1_i(0)+\int_0^t (t-s)^{\alpha_i-1}E_{\alpha_i,\alpha_i}(-\rho(t-s)^{\alpha_i}) \sum_{j=1}^d
(a_{ij}
+\rho \delta_{ij})\tilde{\tilde x}^1_j(s)ds\notag\\
&\hspace{1cm}+\int_0^t (t-s)^{\alpha_i-1}E_{\alpha_i,\alpha_i}(-\rho (t-s)^{-\alpha_i})\big(q\sum_{j=1}^db_{ij}\lambda^1_j+q\sum_{k=1}^ne_{ik}\xi^1_k +\tilde{f}_i(s)+a_i(s)\big)ds
\end{align*} 
which implies that
\begin{align*}
	\dot{\tilde{\tilde x}}^1_i(t)&=-\rho t^{\alpha_i-1}E_{\alpha_i}(-\rho t^{\alpha_i})\tilde{\tilde x}^1_i(0)+t^{\alpha_i-1}E_{\alpha_i}(-\rho t^{\alpha_i})\sum_{j=1}^d
	(a_{ij}
	+\rho \delta_{ij})\tilde{\tilde x}^1_j(0)\\
	&\hspace{1cm}+\int_0^t (t-s)^{\alpha_i-1}E_{\alpha_i,\alpha_i}(-\rho(t-s)^{\alpha_i}) \sum_{j=1}^d
	(a_{ij}
	+\rho \delta_{ij})\dot{\tilde{\tilde x}}^1_j(s)ds\notag\\
	&\hspace{1cm}+ t^{\alpha_i-1}E_{\alpha_i,\alpha_i}(-\rho t^{-\alpha_i})\big(q\sum_{j=1}^db_{ij}\lambda^1_j+q\sum_{k=1}^ne_{ik}\xi^1_k +\tilde{f}_i(0)+a_i(0)\big) \\
	&\hspace{1cm}+\int_0^t (t-s)^{\alpha_i-1}E_{\alpha_i,\alpha_i}(-\rho (t-s)^{-\alpha_i})\big(\dot{\tilde{f}}_i(s)+\dot{a}_i(s)\big)ds\\
	&=t^{\alpha_i-1}E_{\alpha_i,\alpha_i}(-\rho t^{-\alpha_i})\big(\sum_{j=1}^d
	a_{ij}\tilde{\lambda}^1_j
	+q\sum_{j=1}^db_{ij}\lambda^1_j+q\sum_{k=1}^ne_{ik}\xi^1_k +\tilde{f}_i(0)\big) \\
	&\hspace{1cm}+\int_0^t (t-s)^{\alpha_i-1}E_{\alpha_i,\alpha_i}(-\rho(t-s)^{\alpha_i}) \sum_{j=1}^d
	(a_{ij}
	+\rho \delta_{ij})\dot{\tilde{\tilde x}}^1_j(s)ds\notag\\
	&\hspace{1cm}+\int_0^t (t-s)^{\alpha_i-1}E_{\alpha_i,\alpha_i}(-\rho (t-s)^{-\alpha_i})\big(\dot{\tilde{f}}_i(s)+\dot{a}_i(s)\big)ds.
\end{align*}
Since$$\sum_{j=1}^d
a_{ij}\tilde{\lambda}^1_j
+q\sum_{j=1}^db_{ij}\lambda^1_j+q\sum_{k=1}^ne_{ik}\xi^1_k +\tilde{f}_i(0)\le \sum_{j=1}^d
a_{ij}\tilde{\lambda}^1_j
+\sum_{j=1}^db_{ij}\lambda^1_j+\sum_{k=1}^ne_{ik}\xi^1_k +\hat{f}_i(0)<0,$$ it follows by the same arguments as in the step 2 that, for $1\leq i \leq d$, the function $\tilde {\tilde x}^1_i(\cdot)$ is also strictly decreasing on $[0,\infty)$. Thus, the limit $\displaystyle\lim_{t\to\infty}\tilde{\tilde x}^1(t)$ exists and it is easy to check that
$$\displaystyle\lim_{t\to\infty}\tilde{\tilde x}^1(t)=-qA^{-1}(B\lambda^1+E\xi^1)=q{\hat\lambda}^1 \prec q^2\lambda^1,$$
which leads to  $$\displaystyle\lim_{t\to\infty}\tilde{\tilde y}^1(t)=-qCA^{-1}(B\lambda^1+E\xi^1)+qD\xi^1+q\varepsilon {\bf 1}_n=q(C{\hat\lambda}_1+D\xi^1+\varepsilon {\bf 1}_n)\prec q^2\xi^1.$$
Thus, there is $t_2>0$ so that
$$\tilde x^1(t)\preceq \tilde{\tilde x}^1(t)\prec q^2 \lambda^1,\tilde y^1(t)\preceq \tilde{\tilde y}^1(t)\prec q^2 \xi^1,\quad \forall t\geq t_2.$$
Let $\hat{t}_2=T_1+t_2$. Due to $\displaystyle \lim_{t\to \infty}(t-\tau_k(t))=\infty,k=1,2,3$, we can find $T_2>\hat{t}_2$ which satisfying $$t-\tau_k(t)>\hat{t}_2,\quad\forall t>T_2, 1\leq k\leq 3.$$
Thus
\begin{align*}
&\hat{x}^1(t;0,0)\prec q \lambda^1,\quad \hat{y}^1(t;0,0)\prec q \xi^1,\quad \forall t\in [T_1,T_2], \\
&\hat{x}^1(t;0,0)\prec q^2 \lambda^1,\quad \hat{y}^1(t;0,0)\prec q^2 \xi^1,\quad \forall t \geq T_2.
\end{align*}
That is, the statement \eqref{B2-2} is true for $m=1$. By applying the same procedure, it is not difficult to check that \eqref{B2-2} is true for $m$ is arbitrary, and thus this completes the proof of this step.

\textbf{{Step 4:}} From \eqref{B2-2}, we have $\displaystyle\lim_{t\to\infty}\hat{x}^1(t;0,0)=0$, $\displaystyle\lim_{t\to\infty}\hat{y}^1(t;0,0)=0$, which completes the proof of this proposition.

\textbf{Case 2:} There exists an index $k_0\in \left\{1,\dots,n\right\}$ such that $c_{k_01}=\dots=c_{k_0d}=0.$ Then, we choose a matrix $\hat{ C}\succeq C$ such that
$0<\displaystyle\sum_{j=1}^d\hat{ c}_{kj}<1,\; k=1,\dots,n$, and $A+B+E(I_n-D)^{-1}\hat{ C}$ is Hurwitz.

Denote by $u(\cdot),\ v(\cdot)$ the unique solution to the following system
\begin{equation} \label{Case2}
	\begin{cases}
		^{\!C}D^{\hat{\alpha}}_{0+}x(t)&=Ax(t)+Bx(t-\tau_1(t))+Ey(t-\tau_2(t))+\hat{f}(t),\; t\in (0,\infty),\\
		y(t)&=\hat{ C}x(t)+Dy(t-\tau_3(t)),\; t\in [0,\infty),\\
		x(t)&=0,\; y(t)=0,\;t\in [-r,0].
	\end{cases}
\end{equation}
As shown above, we have $\displaystyle\lim_{t\to\infty}u(t;0,0)=0$, $\displaystyle\lim_{t\to\infty}v(t;0,0)=0$. By using the positivity of the system, we see
\[0\preceq\hat{x}(t,0,0)\preceq u(t;0,0),\ 0\preceq\hat{y}(t,0,0)\preceq v(t;0,0),\ \forall t\ge 0,\]
which leads to that $\displaystyle\lim_{t\to\infty}\hat{x}^1(t;0,0)=0$, $\displaystyle\lim_{t\to\infty}\hat{y}^1(t;0,0)=0.$
\end{proof}
Let the following system
\begin{equation} \label{M2}
\begin{cases}
	^{\!C}D^{\hat{\alpha}}_{0+}x(t)&=Ax(t)+Bx(t-\tau_1(t))+Ey(t-\tau_2(t)),\; t\in (0,\infty),\\
	y(t)&=Cx(t)+Dy(t-\tau_3(t))+g(t),\; t\in [0,\infty),\\
	x(t)&=\psi(t),\; y(t)=\varphi(t),\;t\in [-r,0],
\end{cases}
\end{equation}
where $g\in C([0,\infty);\R^n)$ and the initial condition satisfies the compatibility condition (K).
\begin{proposition}\label{p2}
Suppose that $A$ is Metzler, $B,C,D,E$ are nonnegative and the conditions \eqref{c1}-\eqref{d1} are satisfied.
	Moreover, we assume that $g\in C([0,\infty);\R^n_{\geq 0})$ and $\displaystyle\lim_{t\to\infty}g(t)=0$. Then, if $A+B+E(I_n-D)^{-1}C$ is Hurwitz, the solution $\Phi^2(\cdot;\psi,\varphi)=(x^2(\cdot;\psi,\varphi),y^2(\cdot;\psi,\varphi))^{\rm T}$ of \eqref{M2} converges to 0 as $t\to\infty$, that is, $$\lim_{t\to\infty}x^2(t;\psi,\varphi)=0,\ \lim_{t\to\infty}y^2(t;\psi,\varphi)=0.$$
\end{proposition}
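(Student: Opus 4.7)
The plan is to reduce Proposition~\ref{p2} to the situation already handled by the four-step argument of Proposition~\ref{p1}. The structural difference is that here the non-vanishing data sit in the initial history $(\psi,\varphi)$ and in the algebraic forcing $g$, rather than in the differential forcing $f$. The idea is to dominate $(x^2,y^2)$ by the solution of an auxiliary system whose initial history consists of constant vectors $\lambda^2,\xi^2$ majorizing $(\psi,\varphi)$ on $[-r,0]$, and whose algebraic forcing is a decreasing nonnegative upper bound $\hat g\succeq g$ with $\hat g(t)\to 0$. Positivity (Theorem~\ref{D2}) then yields the majorization on $[0,\infty)$, and the iterative contraction scheme of Proposition~\ref{p1} applies.

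Concretely, I would first invoke Lemma~\ref{L1} on the Metzler Hurwitz matrix $A+B+E(I_n-D)^{-1}C$ to produce a positive witness vector and, by scaling it up, obtain $\lambda^2\succ 0$ and $\varepsilon>0$ satisfying $\lambda^2\succeq \psi(s)$ on $[-r,0]$ and
\[
(A+B+E(I_n-D)^{-1}C)\lambda^2+E(I_n-D)^{-1}(\varepsilon\mathbf{1}_n+\hat g(0))\prec 0 .
\]
Setting $\xi^2:=(I_n-D)^{-1}(C\lambda^2+\varepsilon\mathbf{1}_n+\hat g(0))\succ 0$, further enlarged so $\xi^2\succeq\varphi(s)$ on $[-r,0]$, yields simultaneously the compatibility identity $C\lambda^2+D\xi^2+\hat g(0)=\xi^2$ and the strict inequality $(A+B)\lambda^2+E\xi^2+\hat g(0)\prec 0$. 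These two conditions are precisely the analogues of the choices made in Step~1 of Proposition~\ref{p1}.

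With these ingredients in place, I would introduce the auxiliary system obtained from \eqref{M2} by replacing $g$ with $\hat g$ and the initial history with the constants $(\lambda^2,\xi^2)$, and denote its solution by $(\hat x^2,\hat y^2)$. Then I would run the four steps of Proposition~\ref{p1} essentially verbatim: Step~1, show $\hat x^2\preceq\lambda^2$ and $\hat y^2\preceq\xi^2$ by checking that $(\lambda^2-\hat x^2,\xi^2-\hat y^2)$ satisfies a positive forced system with nonpositive forcing; Step~2, dominate $(\hat x^2,\hat y^2)$ by the delay-free comparison system (delayed terms frozen at $\lambda^2,\xi^2$, $\hat g$ as forcing), use Lemmas~\ref{L3}--\ref{L4} together with \cite[Lemma~11]{Kubica} to obtain strict monotone decay, and apply the final value theorem to identify the asymptotic state as $-A^{-1}(B\lambda^2+E\xi^2)\prec q\lambda^2$ for some $q\in(0,1)$, with the corresponding $y$-limit $\prec q\xi^2$; Step~3, use (T4) to shift past the initial history and iterate, producing an increasing sequence $T_m\to\infty$ with $\hat x^2(t)\prec q^m\lambda^2$, $\hat y^2(t)\prec q^m\xi^2$ for $t\geq T_m$; Step~4, conclude that $\hat x^2,\hat y^2\to 0$, hence $x^2,y^2\to 0$.

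The main obstacle is verifying the strict monotonicity of the auxiliary comparison system in Step~2. In Proposition~\ref{p1} this relied on the blow-up $\dot{\bar x}^1(0^+)=-\infty$ forced by the decreasing $\hat f$. In the present setting there is no $\hat f$, but the strict inequality $(A+B)\lambda^2+E\xi^2+\hat g(0)\prec 0$ engineered in the first step yields $^{\!C}D^{\alpha_i}_{0^+}\bar x_i^2(0^+)<0$, which is exactly what is needed to run the same contradiction argument (taking the infimum of zeros of $\dot{\bar x}^2$) and propagate strict monotonicity forward. Finally, as in Case~2 of Proposition~\ref{p1}, any row of $C$ that fails $\sum_j c_{ij}>0$ is handled by replacing $C$ with a slightly larger majorant $\hat C$ whose rows are strictly positive and for which $A+B+E(I_n-D)^{-1}\hat C$ remains Hurwitz, then comparing with the resulting auxiliary solution.
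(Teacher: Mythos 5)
Your proposal follows essentially the same route as the paper's proof: majorize the initial history and the forcing $g$ by constants (the paper folds $(I_n-D)\sup_{[-r,0]}\varphi$ and $\sup_{t\ge 0}g(t)+{\bf 1}_n$ into a single vector $G$ and sets $\xi^2=(I_n-D)^{-1}(C\lambda^2+G)$, rather than building a decreasing majorant $\hat g$), then rerun the four-step contraction scheme of Proposition \ref{p1}, whose strict-monotonicity argument indeed only requires $(A+B)\lambda^2+E\xi^2\prec 0$; your explicit carry-over of the Case-2 reduction for zero rows of $C$ is a point the paper glosses over here. The one bookkeeping item to repair is the order of choices in your first step: enlarging $\xi^2$ \emph{after} fixing $\lambda^2$ and $\varepsilon$ destroys the identity $(I_n-D)\xi^2=C\lambda^2+\varepsilon{\bf 1}_n+\hat g(0)$ and may break $(A+B)\lambda^2+E\xi^2\prec 0$, so the constant majorant of $\varphi$ must be absorbed into $G$ (or into $\varepsilon$) \emph{before} $\lambda^2$ is chosen, exactly as the paper arranges it.
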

\begin{proof}
The proof is done through the following four steps.
	
	{\bf Step 1:} Put $G:=\max\{(I_n-D)\displaystyle\sup_{-r\leq t\leq 0}\varphi(t),\displaystyle\sup_{t\geq0}{g}(t)+{\mathbf 1_n}\}$ and choose $\lambda^2\in \R^d_+$ large enough such that $(A+B)\lambda^2+E(I_n-D)^{-1}(C\lambda^2 +G)\prec 0,\ \lambda^2\succ \displaystyle\sup_{-r\leq t\leq 0}\psi(t) $. Choose $\xi^2:=(I_n-D)^{-1}(C\lambda^2+G)$. It is obvious to see that $x^2(\cdot;\psi,\varphi)\preceq \lambda^2$ and $y^2(\cdot;\psi,\varphi)\preceq \xi^2$ on $[0,\infty)$. Indeed, by setting $x(t)=\lambda^2-x^2(t;\psi,\varphi)$, $y(t)=\xi^2-y^2(t;\psi,\varphi)$, $t\in [-r,\infty)$, then $x(t)=\lambda^2-\psi(t)\succeq 0$, $y(t)=\xi^2-\varphi(t)\succeq 0$ on $[-r,0]$, and
	\begin{align}
		^{\!C}D^{\hat{\alpha}}_{0+}x(t)&=-^{\!C}D^{\hat{\alpha}}_{0+}x^2(t;\psi,\varphi)\notag\\
		&=-\left(Ax^2(t;\psi,\varphi)+Bx^2(t-\tau_1(t);\psi,\varphi)+Ey^2(t-\tau_2(t);\psi,\varphi)\right)\notag\\
		&=Ax(t)+B x(t-\tau_1(t))+E y(t-\tau_2(t))-\left((A+B)\lambda^2+E\xi^2\right),\;t>0,\label{t1eq}\\
		y(t)&=\xi^2 -y^2(t-\tau_3(t);\psi,\varphi)\notag\\
		&=\xi^2 -(Cx^2(t;\psi,\varphi)+Dy^2(t-\tau_3(t);\psi,\varphi)+g(t))\notag\\
		&=Cx(t)+Dy(t-\tau_3(t))+(\xi^2-C\lambda^2-D\xi^2-g(t))\notag\\
		&=Cx(t)+Dy(t-\tau_3(t))+(G-g(t)),\;t\geq 0.\label{t2eq}
	\end{align}
By the fact that $(A+B)\lambda^2+E\xi^2\preceq 0$ and $G-g(t) \succeq 0$, the system \eqref{t1eq}-\eqref{t2eq} above is positive and thus $x(t)\succeq 0$, $y(t)\succeq 0$ for all $t\geq 0$. This implies that $$x^2(t;\psi,\varphi)\preceq \lambda^2,\ y^2(t;\psi,\varphi)\preceq \xi^2,\ \forall t\in[0,\infty).$$

	{\bf Step 2:} In this step, we will prove that there is $t_1>0$ and $q \in (0,1)$ so that
\begin{equation}\label{e12}
x^2(t;\psi,\varphi)\prec q\lambda^2,\quad y^2(t;\psi,\varphi)\prec q\xi^2,\; \forall t\geq t_1.
\end{equation}
Denote by $\overline{x}^2(\cdot),\overline{y}^2(\cdot)$ the unique solution to the following system
\begin{equation} \label{B2-3}
\begin{cases}
^{\!C}D^{\hat{\alpha}}_{0+}{x}(t)&=A{x}(t)+B\lambda^2+E\xi^2,\quad\forall t> 0,\\
y(t)&=C{x}(t)+D\xi^2+G,\quad\forall t\ge 0,\\
x(0)&=\lambda^2,\;y(0)=\xi^2.
\end{cases}
\end{equation}
Since
\begin{align}
\label{E62}
0\preceq x^2(t;\psi,\varphi)\preceq\overline{x}^2(t)\preceq \lambda^2,\quad 0\preceq y^2(t;\psi,\varphi)\preceq\overline{y}^2(t)\preceq\xi^2,\quad t\geq 0.
\end{align}
Furthermore, using the same arguments as above, we see that ${\overline{x}}^2(\cdot)$ is strictly decreasing on $[0,\infty)$. Thus,
\begin{align*}
\lim_{t\to\infty}\overline{x}^2(t)&=-A^{-1}(B\lambda^2+E\xi^2):=\hat{\lambda}^2\prec \lambda^2,\\ 
\lim_{t\to\infty}\overline{y}^2(t)&:=\hat{\xi}^2=C\hat{\lambda}^2+D\xi^2+G\prec \xi^2. 
\end{align*}
Let $q\in (0,1)$ be the constant with $\hat{\lambda}^2\prec q\lambda^2$, $\hat{\xi}^2\prec q\xi^2$. Then, we can find a time $t_1>0$ so that
$$x^2(t;\psi,\varphi)\preceq \overline{x}^2(t)\prec q\lambda^2,\quad y^2(t;\psi,\varphi)\preceq \overline{y}^2(t)\prec q\xi^2, \quad\forall t\geq t_1.$$
	
	{\bf Step 3:} In this step, we will point out a decreasing sequence $\{T_m\}_{m=1}^\infty$ with $ \displaystyle \lim_{m\to\infty}T_m=\infty$ which justifies the estimates
\begin{align} \label{B2}
x^2(t;\psi,\varphi)\prec q^m\lambda^2,\quad y^2(t;\psi,\varphi)\prec q^m\xi^2,\quad \forall t\in [T_m,T_{m+1}].
\end{align}
From the assumption that $\displaystyle \lim_{t\to \infty}(t-\tau_k(t))=\infty,k=1,2,3$ and $\displaystyle \lim_{t\to \infty}g(t)=0$, there exists $\hat{t}_1>t_1$ so that $$t-\tau_k(t)>t_1,\ g(t)\preceq qG,\ \forall t>\hat{t}_1.$$ Set $T_1=\hat{t}_1>t_1$, then
\begin{align*}
&x^2(t;\psi,\varphi)\preceq \lambda^2,\quad y^2(t;\psi,\varphi)\preceq \xi^2,\quad \forall t\in [0,T_{1}],\\
&x^2(t;\psi,\varphi)\prec q\lambda^2,\quad y^2(t;\psi,\varphi)\prec q\xi^2,\quad \forall t\ge T_1.
\end{align*}
Define $\tilde{x}^2(t)=x^2(t+T_1;\psi,\varphi),\tilde{y}^2(t)=y^2(t+T_1;\psi,\varphi),\ t\geq0$. Then,
$$^{\!C}D^{\alpha_i}_{0+}\tilde{x}^2(t)\preceq A\tilde{x}^2(t)+qB\lambda^2+qE\xi^2+a(t),\quad \forall t>1,$$
here $a(t)$ is defined as above.
On the other hand,
$$\tilde y^2(t)\preceq C\tilde x^2(t)+qD\xi^2+qG,\quad \forall t>0.$$
Let $\tilde{\tilde x}^2(\cdot),\tilde{\tilde y}^2(\cdot)$ be the solution to the following system
\begin{equation}\label{systxnga}
\begin{cases}
^{\!C}D^{\hat{\alpha}}_{0+} x(t)&=A x(t)+qB\lambda^2+qE\xi^2+a(t),\quad \forall t>0\\
y(t)&=Cx(t)+qD\xi^2+qG,\quad \forall t\ge0
\end{cases}
\end{equation}
with $
\tilde{\tilde x}^2_i(0)=\tilde{\tilde \lambda}_i^2>\displaystyle\max_{t\in[0,1]} \frac{\tilde{x}^2_i(t)}{E_{\alpha_i}(-\rho t^\alpha_i)},\; i=1,\dots,d$ such that
\[A\tilde{\tilde \lambda}^2+B\lambda^2+E\xi^2\prec0,\] where $\tilde{\tilde \lambda}^2=(\tilde{\tilde \lambda}_1^2,\dots,\tilde{\tilde \lambda}_d^2)^T$, and $\tilde{\tilde y}^2(0)=C\tilde{\tilde x}^2(0)+qD\xi^2+qG.$ Then, by using the comparison arguments as above, we see that $\tilde {\tilde x}^2(\cdot)$ is strictly decreasing on $[0,\infty)$. Thus,
$$\displaystyle\lim_{t\to\infty}\tilde{\tilde x}^2(t)=-qA^{-1}(B\lambda^2+E\xi^2)=q\hat{\lambda}^2 \prec q^2\lambda^2,$$
which leads to $$\displaystyle\lim_{t\to\infty}\tilde{\tilde y}^2(t)=-qCA^{-1}(B\lambda^2+E\xi^2)+qD\xi^2+qG=q\hat{\xi}^2\prec q^2\xi^2.$$
Thus, there is $t_2>0$ and the following inequalities are true
$$\tilde x^2(t)\preceq \tilde{\tilde x}^2(t)\prec q^2 \lambda^2,\ \tilde y^2(t)\prec \tilde{\tilde y}^2(t)\preceq q^2 \xi^2,\quad \forall t\geq t_2.$$
Let $\hat{t}_2=T_1+t_2$. Due to $\displaystyle \lim_{t\to \infty}(t-\tau_k(t))=\infty,k=1,2,3$, we can find $T_2>\hat{t}_2$ satisfying $$t-\tau_k(t)>\hat{t}_2,\quad\forall t>T_2, 1\leq k\leq 3.$$
Thus
\begin{align*}
&x^2(t;\psi,\varphi)\prec q \lambda^2,\quad y^2(t;\psi,\varphi)\prec q \xi^2,\quad \forall t\in [T_1,T_2], \\
&x^2(t;\psi,\varphi)\prec q^2 \lambda^2,\quad y^2(t;\psi,\varphi)\prec q^2 \xi^2,\quad \forall t \geq T_2,
\end{align*}
which implies that \eqref{B2} is true for $m=1$. By applying the same procedure as in Proposition \ref{p1}, it is not difficult to check that \eqref{B2} is true for $m$ is arbitrary which completes the proof of this step.

\textbf{{Step 4:}} From \eqref{B2}, we have $\displaystyle\lim_{t\to\infty}x^2(t;\psi,\varphi)=0$, $\displaystyle\lim_{t\to\infty}y^2(t;\psi,\varphi)=0$. The proof is finished.
\end{proof}
\begin{proof}[Proof of Theorem \ref{Asy1}]
The proof is straightforward by applying Proposition \ref{p1} and Proposition \ref{p2}.
\end{proof}
We now focus on the system \eqref{E3} in the homogeneous case, that is, let $f=0$, $g=0$ on $[0,\infty)$ and study the following system:
\begin{equation} \label{Eh3}
\begin{cases}
^{\!C}D^{\hat{\alpha}}_{0+}x(t)&=Ax(t)+Bx(t-\tau_1(t))+Ey(t-\tau_2(t)),\; t\in (0,\infty),\\
y(t)&=Cx(t)+Dy(t-\tau_3(t)),\; t\in [0,\infty),\\
x(t)&=\psi(t),\;y(t)=\varphi(t),\;t\in [-r,0],
\end{cases}
\end{equation}
the initial conditions are continuous and satisfy the compatibility condition
\begin{itemize}
\item[(K1)] $C\psi(0)+D\varphi(-\tau_3(0))=\varphi(0)$.
\end{itemize}

We obtain a necessary and sufficient criterion on the global attractivity of its solutions as follows.

\begin{theorem} \label{Asy2}
Suppose that $A$ is Metzler, $B,C,D,E$ are nonnegative and the conditions \eqref{c1}-\eqref{d1} are satisfied.
Then, the system \eqref{Eh3} is globally attractive if and only if
$A+B+E(I_n-D)^{-1}C$ is Hurwitz.
\end{theorem}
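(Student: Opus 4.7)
The plan is to handle the two implications separately. Sufficiency is immediate from Theorem \ref{Asy1}: taking $f \equiv 0$ and $g \equiv 0$, both forcings trivially belong to $C([0,\infty);\R_{\geq 0}^d)$ and $C([0,\infty);\R_{\geq 0}^n)$ and tend to $0$ at infinity, so if $M := A + B + E(I_n - D)^{-1} C$ is Hurwitz, then \eqref{Eh3} is globally attractive. The real work is the necessity direction, which I would prove by contraposition.

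First I would check that $M$ is Metzler. By condition \eqref{d1} the nonnegative matrix $D$ is Schur, so Lemma \ref{L2} gives $(I_n - D)^{-1} \succeq 0$; combined with the nonnegativity of $B, C, E$ and the Metzler property of $A$, $M$ is the sum of a Metzler matrix and a nonnegative one, hence Metzler. Now suppose $M$ is not Hurwitz. By the Perron-Frobenius theorem for Metzler matrices, the spectral abscissa $s(M) \ge 0$ is itself an eigenvalue of $M$ with an associated nonnegative eigenvector $v \succeq 0$, $v \ne 0$. Define $w := (I_n - D)^{-1} C v \succeq 0$; by construction,
\begin{equation*}
(A+B) v + E w = M v = s(M) v \succeq 0, \qquad C v + D w = w.
\end{equation*}

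Next I would choose the constant initial data $\psi(\theta) \equiv v$, $\varphi(\theta) \equiv w$ on $[-r,0]$. The compatibility condition (K1) is fulfilled, since $C \psi(0) + D \varphi(-\tau_3(0)) = C v + D w = w = \varphi(0)$. Writing the corresponding solution of \eqref{Eh3} in the form $x(t) = v + \tilde x(t)$, $y(t) = w + \tilde y(t)$, and using that the Caputo derivative annihilates constants, a direct substitution (together with $(A+B)v + Ew = s(M)v$ and $Cv + Dw = w$) shows that $(\tilde x, \tilde y)$ solves
\begin{equation*}
\begin{cases}
^{\!C}D^{\hat{\alpha}}_{0+} \tilde x(t) = A \tilde x(t) + B \tilde x(t - \tau_1(t)) + E \tilde y(t - \tau_2(t)) + s(M) v, & t > 0, \\
\tilde y(t) = C \tilde x(t) + D \tilde y(t - \tau_3(t)), & t \ge 0, \\
\tilde x(\theta) = 0, \ \tilde y(\theta) = 0, & \theta \in [-r,0].
\end{cases}
\end{equation*}
This is a positive system driven by the nonnegative constant forcing $s(M) v \succeq 0$ with zero, obviously compatible, initial data, so Theorem \ref{D2} yields $\tilde x(t), \tilde y(t) \succeq 0$ for all $t \ge 0$. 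Hence $x(t) \succeq v$ on $[0,\infty)$; since $v \ne 0$ has at least one strictly positive component, $\|\Phi(t; \psi, \varphi)\|$ stays bounded below by a positive constant and therefore cannot converge to $0$, contradicting the assumed global attractivity.

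The main obstacle in this argument is modest and essentially bookkeeping: one must verify that $M$ is Metzler (in order to extract a nonnegative Perron eigenvector) and engineer the pair $(v, w)$ so that the compatibility condition (K1) is automatic and the shifted dynamics become a positive system with nonnegative constant forcing. After that, none of the delicate fractional smoothness or step-by-step comparison machinery developed for Propositions \ref{p1}-\ref{p2} is required; only the positivity result Theorem \ref{D2} is invoked.
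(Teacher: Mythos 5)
Your proposal is correct, and its overall architecture coincides with the paper's: sufficiency is read off from Theorem \ref{Asy1} with $f=g=0$, and necessity is proved by shifting the solution by a constant pair $(v,w)$ with $w=(I_n-D)^{-1}Cv$, observing that the shifted variables solve a positive system with zero initial data and nonnegative constant forcing, and concluding $x(t)\succeq v$ for all $t$, contradicting attractivity. The one place where you genuinely diverge is the choice of the test vector, and it is worth highlighting. The paper takes an \emph{arbitrary} $\lambda\succ 0$ and asserts, citing Lemma \ref{L1}, that non-Hurwitzness of the Metzler matrix $M=A+B+E(I_n-D)^{-1}C$ forces $M\lambda\succeq 0$ for \emph{every} $\lambda\succ 0$; but the negation of Lemma \ref{L1}(i) only says that $M\lambda\not\prec 0$, i.e.\ that at least one component of $M\lambda$ is nonnegative (consider $M=\mathrm{diag}(-1,1)$), and the positivity of the shifted system \eqref{Eh4} really does require the whole forcing vector to be nonnegative. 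Your route via Perron--Frobenius --- extracting a nonnegative eigenvector $v$ of the Metzler matrix $M$ associated with the spectral abscissa $s(M)\ge 0$, so that $Mv=s(M)v\succeq 0$ holds componentwise by construction --- supplies exactly the nonnegativity that the argument needs, at the mild cost that $v$ is only $\succeq 0$ rather than $\succ 0$ (which is harmless, since one strictly positive component of $v$ already prevents $\|\Phi(t;\psi,\varphi)\|\to 0$). In short, your proof buys a watertight justification of the step the paper passes over too quickly, while using no machinery beyond Lemma \ref{L2}, Theorem \ref{D2}, and standard Perron--Frobenius theory; the rest (compatibility of the constant data with (K1), the cancellation giving the homogeneous difference equation for $\tilde y$, and the appeal to Theorem \ref{Asy1} for sufficiency) matches the paper exactly.
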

\begin{proof}
\textbf{\textit{ Necessity:}} Suppose that the system \eqref{Eh3} is globally attractive, that is, for any $\psi\in C([-r,0];\R_{\geq 0}^d)$, $\varphi\in C([-r,0];\R_{\geq 0}^n)$ such that the condition (K1) is subjected, we have
\begin{equation}\label{pc1}
\lim_{t\to\infty}\|\Phi(t;\psi,\varphi)\|=0
\end{equation}
with $$\Phi(t;\psi,\varphi):=\left(\begin{array}{cc}
x(t;\psi,\varphi) \\ y(t;\psi,\varphi)\end{array}\right) $$ is the unique solution to the initial value problem \eqref{Eh3}. We will show that $A+B+E(I_n-D)^{-1}C$ is Hurwitz.
Indeed, assume the assertion is false. Due to $\displaystyle\sum_{j=1}^n|d_{ij}|<1,\ 1\le i\le n$, by Lemma \ref{L2}, we see that $(I_n-D)^{-1}\succeq 0$ and thus $E(I_n-D)^{-1}C\succeq 0$. It, together with the assumption that $A$ is Metzler, implies that $A+B+E(I_n-D)^{-1}C$ is also a Metzler matrix. According to Lemma \ref{L1}, then $$\left(A+B+E(I_n-D)^{-1}C\right)\lambda\succeq0,\forall \lambda\succ 0.$$
Choose and fix a positive vector $\lambda\in \R_{+}^d$ and put $\xi:=(I_n-D)^{-1}C\lambda$. We define
$$\widetilde{x}(t):=x(t;\lambda,\xi)-\lambda;\quad \widetilde{y}(t):=y(t;\lambda,\xi)-\xi$$ for all $t\ge -r$, where $$\Phi(t;\lambda,\xi)=\left(\begin{array}{cc}
x(t;\lambda,\xi) \\ y(t;\lambda,\xi)\end{array}\right) $$ is the unique solution to the system \eqref{Eh3} with the initial condition $x(t;\lambda,\xi)=\lambda$ and $ y(t;\lambda,\xi)=\xi$ on $[-r,0]$. Then, it is easy to see that $\widetilde{x}(\cdot),\widetilde{y}(\cdot)$ is the unique solution to the following system
\begin{equation} \label{Eh4}
\begin{cases}
^{\!C}D^{\hat{\alpha}}_{0+}\widetilde{x}(t)&=A\widetilde{x}(t)+B\widetilde{x}(t-\tau_1(t))+E\widetilde{y}(t-\tau_2(t))+\left(A+B+E(I_n-D)^{-1}C\right)\lambda,\;t>0\\
\widetilde{y}(t)&=C\widetilde{x}(t)+D\widetilde{y}(t-\tau_3(t)),\;t>0,\\
\widetilde{x}(t)&=0,\;\widetilde{y}(t)=0,\; t\in [-r,0].
\end{cases}
\end{equation}
On the other hand, since the system \eqref{Eh4} is positive, we observe that $$\widetilde{x}(t)\succeq0,\widetilde{y}(t)\succeq0,\quad\forall t\ge 0.$$ This implies that $x(t;\lambda,\xi)\succeq\lambda$ and $y(t;\lambda,\xi)\succeq \xi$ for all $t\ge 0$, which contradicts \eqref{pc1}. Hence $A+B+E(I_n-D)^{-1}C$ is Hurwitz.
	
\textbf{\textit{Sufficiency:}} The proof is straightforward by applying Theorem \ref{Asy1}.
\end{proof}
\begin{corollary}\label{t1}
Consider the system \eqref{Eh3}
\begin{equation*}
\begin{cases}
^{\!C}D^{\hat{\alpha}}_{0+}x(t)&=Ax(t)+Bx(t-\tau_1(t))+Ey(t-\tau_2(t)),\; t\in (0,\infty)\\
y(t)&=Cx(t)+Dy(t-\tau_3(t)),\; t\in [0,\infty)
\end{cases}
\end{equation*}
with the initial condition
\begin{equation}\label{t2}
\begin{cases}
x(\cdot)&=\lambda\in\R^d,\\
y(\cdot)&=\xi:=(I-D)^{-1}C\lambda
\end{cases}
\end{equation}
on $[-r,0]$. Suppose that the assumptions in Theorem \ref{Asy2} are true. Then, $$\lim_{t\to\infty}\Phi(t;\lambda,\xi)=0,$$
where $\Phi(\cdot;\lambda,\xi)$ is the unique solution to the system \eqref{Eh3}-\eqref{t2}.
\end{corollary}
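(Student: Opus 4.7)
The plan is to reduce the statement to Theorem \ref{Asy2}. Since $\lambda \in \R^d$ is arbitrary (not necessarily in $\R^d_{\geq 0}$) while Theorem \ref{Asy2} only asserts global attractivity for nonnegative initial data (as the definition in the positive case restricts to $\psi \in C([-r,0];\R^d_{\geq 0})$, $\varphi \in C([-r,0];\R^n_{\geq 0})$), the key idea is to exploit the linearity of the homogeneous system \eqref{Eh3} to split the initial condition into a difference of two nonnegative pairs, each of which falls directly under Theorem \ref{Asy2}.

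First, I would write $\lambda = \lambda^+ - \lambda^-$ component-wise with $\lambda^\pm \in \R^d_{\geq 0}$, and set $\xi^\pm := (I_n - D)^{-1} C \lambda^\pm$. By Lemma \ref{L2}, the hypothesis \eqref{d1} together with nonnegativity of $D$ implies that $I_n - D$ is invertible and $(I_n - D)^{-1} \succeq 0$; combined with $C \succeq 0$, this gives $\xi^\pm \in \R^n_{\geq 0}$ and obviously $\xi = \xi^+ - \xi^-$. From $(I_n - D)\xi^\pm = C\lambda^\pm$ one deduces $C\lambda^\pm + D\xi^\pm = \xi^\pm$, so each constant pair $(\lambda^\pm, \xi^\pm)$ on $[-r,0]$ satisfies the compatibility condition (K1).

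Next, I would invoke the linearity of \eqref{Eh3} together with the uniqueness of solutions from Theorem \ref{EU} to conclude that the solution operator is linear in the initial data, so
\begin{equation*}
\Phi(t;\lambda,\xi) = \Phi(t;\lambda^+,\xi^+) - \Phi(t;\lambda^-,\xi^-), \quad t \geq -r.
\end{equation*}
Since the standing assumptions of Theorem \ref{Asy2} hold (in particular $A + B + E(I_n-D)^{-1}C$ is Hurwitz, $A$ is Metzler, and $B, C, D, E$ are nonnegative) and each pair $(\lambda^\pm,\xi^\pm)$ is nonnegative and satisfies (K1), applying Theorem \ref{Asy2} to each piece gives $\lim_{t\to\infty} \|\Phi(t;\lambda^\pm,\xi^\pm)\| = 0$. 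The triangle inequality then yields $\lim_{t\to\infty} \|\Phi(t;\lambda,\xi)\| = 0$, as required.

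There is no substantial obstacle: all ingredients (nonnegativity of $(I_n - D)^{-1}$ from Lemma \ref{L2}, uniqueness/linearity of the solution operator from Theorem \ref{EU}, and the attractivity conclusion of Theorem \ref{Asy2}) are already established in the paper. The only point to track carefully is that the decomposition $\xi = \xi^+ - \xi^-$ is forced by the compatibility constraint rather than freely chosen, and that each piece separately respects (K1); the verification above takes care of this.
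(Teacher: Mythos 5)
Your proposal is correct and follows essentially the same route as the paper: decompose $\lambda$ into nonnegative and nonpositive parts, use linearity and uniqueness of solutions to split $\Phi(\cdot;\lambda,\xi)$ accordingly, and apply Theorem \ref{Asy2} to each nonnegative piece. The only differences are cosmetic (the paper writes $\lambda=\lambda^++\lambda^-$ with $-\lambda^-\succeq 0$ rather than $\lambda=\lambda^+-\lambda^-$), and your explicit verification of (K1) and of $\xi^\pm\succeq 0$ via Lemma \ref{L2} is a welcome bit of extra care that the paper leaves implicit.
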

\begin{proof}
For each $\lambda\in \R^d$, we can find $\lambda^+$, $\lambda^-$ such that $\lambda=\lambda^+ +\lambda^-$, where $\lambda^+,-\lambda^-\in\R^d_{\geq 0}$. Then, we see that $\Phi(\cdot;\lambda,\xi)=\Phi(\cdot;\lambda^+,\xi^{+})+\Phi(\cdot;\lambda^-,\xi^{-})$ on $[-r,\infty)$, here $\xi^+=(I_n-D)^{-1}C\lambda^+\succeq 0$ and $\xi^-=(I_n-D)^{-1}C\lambda^-\preceq 0$. Notice that $\Phi(\cdot;\lambda^-,\xi^{-})=-\Phi(\cdot;-\lambda^-,-\xi^{-})$. Furthermore, by Theorem \ref{Asy2}, $\lim_{t\to\infty}\Phi(t;\lambda^+,\xi^{+})=\lim_{t\to\infty}\Phi(t;-\lambda^-,-\xi^{-})=0$. This leads to that $\lim_{t\to\infty}\Phi(t;\lambda,\xi)=0$.
\end{proof}
As an application of Corollary \ref{t1}, we obtain a result on the smallest asymptotic bound of solutions to  mixed-order positive non-homogeneous linear delay coupled systems below.

Let the system \eqref{E3}
\begin{equation*}
\begin{cases}
^{\!C}D^{\hat{\alpha}}_{0+}x(t)&=Ax(t)+Bx(t-\tau_1(t))+Ey(t-\tau_2(t))+f(t),\; t\in (0,\infty)\\
y(t)&=Cx(t)+Dy(t-\tau_3(t))+g(t),\; t\in [0,\infty)
\end{cases}
\end{equation*}
with the initial condition \eqref{dkd3}
\begin{equation*}
\begin{cases}
x(\cdot)&=\psi(\cdot)\\
y(\cdot)&=\varphi(\cdot)
\end{cases}
\end{equation*}
on $[-r,0]$, where $\psi:[-r,0]\rightarrow \R^d_{\geq 0},\varphi:[-r,0]\rightarrow \R^n_{\geq 0}$ are continuous and
\[
C\psi(0)+D\varphi(-\tau_3(0))+g(0)=\varphi(0).
\]
\begin{theorem} \label{best}
Suppose that $A$ is Metzler, $B,C,D,E$ are nonnegative, $A+B+E(I_n-D)^{-1}C$ is Hurwitz and the following statements hold.
\begin{itemize}
\item[(i)] The conditions \eqref{c1}-\eqref{d1} are satisfied.
\item[(ii)] The matrix $H=(h_{ij})_{1\leq i\leq n,1\leq j\leq d}:=(I_n-D)^{-1}C$ satisfies that for each $i=\overline{1,n}$, there exists $j=\overline{1,d}$ with $h_{ij}>0$.
\end{itemize}
Moreover, we assume that $f\in C([0,\infty);\R_{\geq 0}^d)$, $g\in C([0,\infty);\R_{\geq 0}^n)$, here $\displaystyle{\sup_{t\geq 0}}f(t)=:F\prec \infty$, $\displaystyle\sup_{t\geq 0}g(t)=:G\prec \infty$. Then, 
$$
\sup_{
f\in B_{C([0,\infty);\R^d_{\geq 0})}(0;F)\atop
g\in B_{C([0,\infty);\R^n_{\geq 0})}(0;G)}\limsup_{t\to\infty}\Phi(t;\psi,\varphi)=(x^*,y^*)^{\rm T}
$$
in which $x^*:=-[A+B+E(I_n-D)^{-1}C]^{-1}[F+E(I_n-D)^{-1}G]$, $y^*=(I_n-D)^{-1}(Cx^*+G)$.
\end{theorem}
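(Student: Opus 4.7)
The plan is to combine a monotone comparison argument with the attractivity result established in Theorem \ref{Asy1}, by realising $(x^*, y^*)$ as the unique equilibrium of the system driven by the constant forcing $(F, G)$. Before running the main argument, I would verify that $x^*, y^* \succeq 0$. The matrix $M := A + B + E(I_n-D)^{-1}C$ is Metzler, since $A$ is Metzler and both $B$ and $E(I_n-D)^{-1}C$ are nonnegative (using $(I_n-D)^{-1} \succeq 0$ from Lemma \ref{L2} and condition \eqref{d1}). Because $M$ is additionally Hurwitz, Lemma \ref{L1}(ii) gives $-M^{-1} \succeq 0$, whence $x^* \succeq 0$ and then $y^* = (I_n-D)^{-1}(Cx^* + G) \succeq 0$. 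A direct substitution shows that $(x^*, y^*)$ is the unique constant pair satisfying $(A+B)x^* + Ey^* + F = 0$ and $Cx^* + Dy^* + G = y^*$, i.e.\ the equilibrium of \eqref{E3} for $f \equiv F$, $g \equiv G$.

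For the upper bound, fix admissible $f \preceq F$ and $g \preceq G$ and introduce the auxiliary trajectory $\hat{\Phi}(t) = (\hat{x}(t), \hat{y}(t))^{\rm T}$ solving \eqref{E3} with the constant inputs $F, G$ and a compatible initial pair $(\hat{\psi}, \hat{\varphi}) \succeq (\psi, \varphi)$. The difference $\hat{\Phi} - \Phi$ satisfies the same coupled mixed-order system with nonnegative driving terms $F - f$ and $G - g$ and nonnegative initial data, so Theorem \ref{D2} yields $\Phi(t;\psi,\varphi) \preceq \hat{\Phi}(t)$ for all $t \geq 0$. It therefore suffices to prove $\hat{\Phi}(t) \to (x^*, y^*)^{\rm T}$. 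Setting $\tilde{x} := \hat{x} - x^*$, $\tilde{y} := \hat{y} - y^*$ and using the equilibrium identities above, $(\tilde{x}, \tilde{y})$ satisfies the homogeneous system \eqref{Eh3} with (possibly signed) continuous initial data $(\hat{\psi} - x^*, \hat{\varphi} - y^*)$. Splitting this initial pair into nonnegative and nonpositive parts and invoking Corollary \ref{t1} (extended to continuous signed initial data by linearity, exactly as in its constant-initial-data version) gives $\tilde{\Phi}(t) \to 0$, hence $\limsup_{t\to\infty} \Phi(t;\psi,\varphi) \preceq (x^*, y^*)^{\rm T}$ for every admissible $(f, g)$.

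To see that the bound is attained, I would take $f \equiv F$ and $g \equiv G$, adjusting only the value of $\varphi(0)$ so that the compatibility condition (K) is met for this choice of $g$ (the asymptotic limit is independent of such a modification, again by the attractivity). The argument of the previous paragraph then shows $\Phi(t;\psi,\varphi) \to (x^*, y^*)^{\rm T}$, so this choice realises the supremum exactly. Hypothesis (ii) on the row-wise positivity of $H = (I_n - D)^{-1}C$ is what guarantees that no coordinate of $y^*$ degenerates, so the identity holds componentwise in $\R^{d+n}_{\geq 0}$.

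The principal technical obstacle is the extension of the attractivity result from nonnegative initial data (as formulated in Theorem \ref{Asy2}) to signed continuous initial data, which is needed for the shifted homogeneous pair $(\tilde{x}, \tilde{y})$. The recipe, modeled on Corollary \ref{t1}, is to decompose each component of $(\hat{\psi} - x^*, \hat{\varphi} - y^*)$ into its positive and negative parts, reconcile the compatibility condition (K1) for each piece by an auxiliary additive correction, and then sum back using linearity of \eqref{Eh3}. Once this point is secured, everything else follows from a direct combination of the positivity theorem (Theorem \ref{D2}) and the attractivity theorem (Theorem \ref{Asy1}).
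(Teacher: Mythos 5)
Your overall architecture is exactly the paper's: dominate $\Phi(\cdot;\psi,\varphi)$ by the trajectory driven by the constant inputs $(F,G)$, identify $(x^*,y^*)$ as the equilibrium of that constant-input system, shift by the equilibrium to land in the homogeneous system \eqref{Eh3}, and invoke the attractivity results of Section \ref{s5}. The difference is that the step you flag as ``the principal technical obstacle'' --- extending attractivity to \emph{signed continuous} initial data $(\hat{\psi}-x^*,\hat{\varphi}-y^*)$ --- is an obstacle you create for yourself and then leave unresolved. Your recipe (split into positive and negative parts, ``reconcile (K1) by an auxiliary additive correction,'' sum by linearity) is not worked out, and it is genuinely delicate: the positive and negative parts of a compatible pair do not individually satisfy the compatibility condition, and the needed correction is not an obvious one. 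The paper sidesteps this entirely by taking the dominating trajectory's initial data to be the \emph{constants} $\hat{\psi}\equiv\lambda$, $\hat{\varphi}\equiv\xi:=(I_n-D)^{-1}(C\lambda+G)$ with $\lambda\succeq\psi$ and $\xi\succeq\varphi$. Then
\[
\xi-y^*=(I_n-D)^{-1}(C\lambda+G)-(I_n-D)^{-1}(Cx^*+G)=(I_n-D)^{-1}C(\lambda-x^*),
\]
so the shifted homogeneous system has initial data precisely of the form $\bigl(\lambda',(I_n-D)^{-1}C\lambda'\bigr)$ with $\lambda'=\lambda-x^*\in\R^d$ possibly signed, which is exactly the case Corollary \ref{t1} already covers. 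If you make this choice, your proof closes with no new lemma needed.

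Two smaller points. First, your reading of hypothesis (ii) is off: it is not about ``coordinates of $y^*$ degenerating.'' Its role is to guarantee that the domination $\varphi\preceq\xi=(I_n-D)^{-1}(C\lambda+G)$ can be achieved at all by taking $\lambda$ large --- if some row of $H=(I_n-D)^{-1}C$ were identically zero, the corresponding component of $\xi$ would not grow with $\lambda$, and the comparison (hence the equality, as opposed to the inequality $\preceq$ in the subsequent remark) could fail. Second, your attainment argument for $f\equiv F$, $g\equiv G$ asserts convergence of $\Phi(t;\psi,\varphi)$ to $(x^*,y^*)$ but the comparison you set up only yields the upper bound $\limsup_{t\to\infty}\Phi\preceq(x^*,y^*)^{\rm T}$; to claim the supremum is attained you should also exhibit the matching lower bound (e.g.\ by a comparison from below), a point the paper's own write-up also passes over quickly.
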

\begin{proof}
Let $\Phi(\cdot;\psi,\varphi)=(x(\cdot;\psi,\varphi), y(\cdot;\psi,\varphi))^{\rm T}$ be the solution to the system \eqref{E3}-\eqref{dkd3}. Choose $\lambda\in \R^d_+$ such that $\psi(\cdot)\preceq \lambda$ and $\varphi(\cdot)\preceq \xi:=(I_n-D)^{-1}(C\lambda +G)$. Put\begin{equation*}
\begin{cases}
\bar{x}(\cdot)&=\hat{x}(\cdot;\lambda,\xi)-x^*\\
\bar{y}(\cdot)&=\hat{y}(\cdot;\lambda,\xi)-y^*
\end{cases}
\end{equation*}
on $[-r,\infty)$. Here, $(\hat{x}(\cdot;\lambda,\xi), \hat{y}(\cdot;\lambda,\xi))^{\rm T}$ is the unique solution to the following system
\begin{equation}\label{add3}
	\begin{cases}
		^{\!C}D^{\hat{\alpha}}_{0+}x(t)&=Ax(t)+Bx(t-\tau_1(t))+Ey(t-\tau_2(t))+F,\; t\in (0,\infty),\\
		y(t)&=Cx(t)+Dy(t-\tau_3(t))+G,\; t\in [0,\infty),\\
		x(t)&=\lambda,\;y(t)=\xi,\;t\in [-r,0].
	\end{cases}
\end{equation}
It is obvious that $(\bar{x}(\cdot),\bar{y}(\cdot))^{\rm T}$ is the unique solution to the system
\begin{equation}\label{add31}
	\begin{cases}
		^{\!C}D^{\hat{\alpha}}_{0+}x(t)&=Ax(t)+Bx(t-\tau_1(t))+Ey(t-\tau_2(t)),\; t\in (0,\infty),\\
		y(t)&=Cx(t)+Dy(t-\tau_3(t)),\; t\in [0,\infty),\\
		x(t)&=\lambda-x^*,\;y(t)=\xi-y^*,\;t\in [-r,0].
	\end{cases}
\end{equation}
Hence, by Corollary \ref{t1}, we have
\[
\lim_{t\to\infty}\bar{x}(t)=0,\;\lim_{t\to\infty}\bar{y}(t)=0.
\]
It implies that 
\[
\lim_{t\to\infty}\hat{x}(t;\lambda,\xi)=x^*,\;\lim_{t\to\infty}\hat{y}(t;\lambda,\xi)=y^*.
\]
On the other hand, by the comparison principle (see \cite[Proposition 2]{GCM_20}), then on $[-r,\infty)$,
\[
\begin{cases}
		x(t;\psi,\varphi)&\preceq \hat{x}(t;\lambda,\xi),\\
		y(t;\psi,\varphi)&\preceq \hat{x}(t;\lambda,\xi).
	\end{cases}
\]
Thus,
$$
\sup_{f\in B_{C([0,\infty);\R^d_{\geq 0})}(0;F),\atop
g\in B_{C([0,\infty);\R^n_{\geq 0})}(0;G)}\limsup_{t\to\infty}\Phi(t;\psi,\varphi)=(x^*,y^*)^{\rm T}.
$$
The proof is complete.
\end{proof}
\begin{remark}
It is worth noting that if the condition (ii) in Theorem \ref{best} does not occur, then we only have the following weaker estimate
$$
\sup_{f\in B_{C([0,\infty);\R^d_{\geq 0})}(0;F),\atop
	g\in B_{C([0,\infty);\R^n_{\geq 0})}(0;G)}\limsup_{t\to\infty}\Phi(t;\psi,\varphi )\preceq(x^*,y^*)^{\rm T}.
$$	
\end{remark}
\begin{remark}
Our main contributions in this section are an improved version of those already in the literature. In particular, they are generalized, strengthened, and corrected results for \cite[Theorem 2]{ShenLam_16}, \cite[Theorem 1]{TTL_21} and \cite[Theorem 2.2]{Tuan_22}. In addition, these results can also be considered as an extension of \cite[Theorem 1]{Shen_15} and \cite[Theorem 4]{Hieu_18} for the case of fractional-order derivatives.
\end{remark}
Finally, we provide numerical examples to clarify the validity of the proposed theoretical results.
\begin{example} \label{Exam1}
	Consider the delay coupled system
	\begin{equation}\label{ex1}
	\begin{cases}
		^{\!C}D^{\hat{\alpha}}_{0+}x(t)&=Ax(t)+Bx(t-\tau_1(t))+Ey(t-\tau_2(t)),\; t\in (0,\infty),\\
		y(t)&=Cx(t)+Dy(t-\tau_3(t)),\; t\in [0,\infty),\\
	\end{cases}
\end{equation}
where $\hat{\alpha}=(0.45,0.65,0.2)$,
	\begin{align*}
		A=	\begin{pmatrix}
			-2.2 & 0.2 & 0.1 \\
			0.3 & -2.4 & 0.2 \\
			0.5 & 0.2 & -2.3
		\end{pmatrix},\
		B=	\begin{pmatrix}
			0.2 & 0.1 & 0.3 \\
			0.2 & 0.2 & 0.1 \\
			0.1 & 0.3 & 0.5
		\end{pmatrix},\
		E=	\begin{pmatrix}
			0.2 & 0.1 \\
			0.2 & 0.3 \\
			0.3 & 0.4 
		\end{pmatrix},
	\end{align*}
	\begin{align*}
		C=	\begin{pmatrix}
			0.1 & 0.2 & 0.1 \\
			0.1 & 0.3 & 0.1 
		\end{pmatrix},\
		D=	\begin{pmatrix}
			0.1 & 0.2  \\
			0.2 & 0.1 
		\end{pmatrix},
	\end{align*}
	and delays $$\tau_1(t)=1+\frac{1}{2}t\sin^2t,\ \tau_2(t)=\frac{1}{3}+\frac{1}{2}t,\ \tau_3(t)=\frac{1}{2}+\frac{1}{2}t\cos^2t,\;t\geq 0.$$
	It is clear that $A$ is Metzler, $B,C,D,E$ are nonnegative and $A+B+E(I-D)^{-1}C$ is Hurwitz. From Theorem \ref{Asy2}, this implies that the system \eqref{ex1} is globally attractive. In Figure 1, we depict the trajectory of a solution to this system with the initial conditions $\psi=(0.3, 0.2,0.8)^T$, $\varphi=(I-D)^{-1}C\psi\approx(0.2195, 0.2377)^T$ on the interval $[-1,0]$.
	\begin{figure}
		\begin{center}
			\includegraphics[scale=.6]{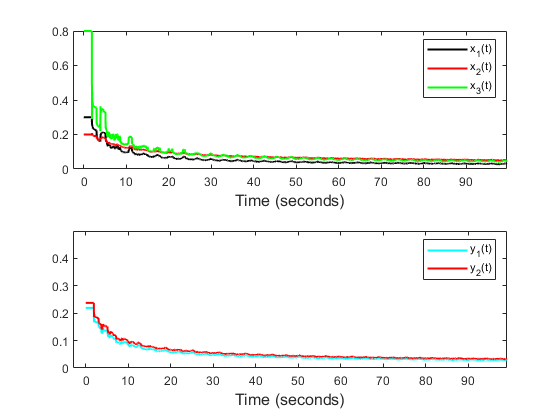}
		\end{center}
		\begin{center}
			\caption{Trajectory of the solution to the system \eqref{ex1} with the initial conditions $\psi=(0.3, 0.2,0.8)^T$, $\varphi\approx(0.2195, 0.2377)^T$ on the interval $[-1,0]$.}
		\end{center}
	\end{figure}
\end{example}
\begin{example}
	Consider the system
	\begin{equation}\label{ex2}
	\begin{cases}
		^{\!C}D^{\hat{\alpha}}_{0+}x(t)&=Ax(t)+Bx(t-\tau_1(t))+Ey(t-\tau_2(t))+f(t),\; t\in (0,\infty),\\
		y(t)&=Cx(t)+Dy(t-\tau_3(t))+g(t),\; t\in [0,\infty),\\
	\end{cases}
\end{equation}
where $\hat{\alpha}=(0.5,0.7,0.3)$, $A,B,C,D,E$, $\tau_1(t),\tau_2(t),\tau_3(t)$ are chosen as in Example \ref{Exam1}, and
	$$f(t)=\big(\frac{2}{1+t}, \frac{t}{e^t}, \frac{\sin^2t}{1+t}\big)^T,\ g(t)=\big(\frac{t}{1+t^2}, \frac{1}{e^t}\big)^T,\;t\geq 0.$$
	By Theorem \ref{Asy1}, the system \eqref{ex2} is globally attractive. In Figure 2 below, a numerical simulation for the trajectories of the solution with initial conditions $\psi=(0.5, 0.1, 1.2)^T$, $\varphi=(I-D)^{-1}[C\psi+g(0)]\approx(0.2740, 0.2831)^T$ on $[-1,0]$.
	\begin{figure}
		\begin{center}
		\includegraphics[scale=.6]{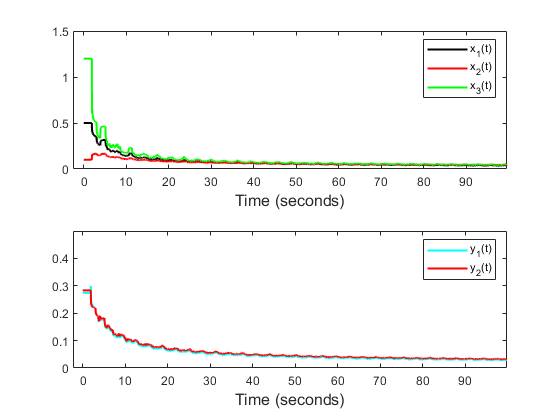}
		\end{center}
		\begin{center}
			\caption{Trajectories of the solution to the system \eqref{ex2} with the initial conditions $\psi=(0.5, 0.1, 1.2)^T$, $\varphi\approx(0.2740, 0.2831)^T$ on $[-1,0]$.}	
		\end{center}
	\end{figure}
\end{example}
\begin{example}
	Consider the system
	\begin{equation}\label{ex3}
	\begin{cases}
		^{\!C}D^{\hat{\alpha}}_{0+}x(t)&=Ax(t)+Bx(t-\tau_1(t))+Ey(t-\tau_2(t))+f(t),\; t\in (0,\infty),\\
		y(t)&=Cx(t)+Dy(t-\tau_3(t))+g(t),\; t\in [0,\infty),\\
	\end{cases}
\end{equation}
where $\hat{\alpha}=(0.55,0.25)$, 
	\begin{align*}
		A=	\begin{pmatrix}
			-2.2 & 0.2  \\
			0.1 & -1.4 
		\end{pmatrix},\
		B=	\begin{pmatrix}
			0.2 & 0.1  \\
			0.2 & 0.3 
		\end{pmatrix},\
		E=	\begin{pmatrix}
			0.2 & 0.5 \\
			0.2 & 0.3 
		\end{pmatrix};
	\end{align*}
	\begin{align*}
		C=	\begin{pmatrix}
			0.4 & 0.2 \\
			0.2 & 0.3
		\end{pmatrix},\
		D=	\begin{pmatrix}
			0.2 & 0.5  \\
			0.3 & 0.1 
		\end{pmatrix};
	\end{align*}
		\begin{align*}
		f(t)=	\begin{pmatrix}
			0.02+0.01\sin t  \\
			\frac{0.1t}{1+t}
		\end{pmatrix},\
		g(t)=	\begin{pmatrix}
			0.1+0.1\cos t  \\
			0.1+0.25 \cdot 2^{\frac{t}{t+1}} 
		\end{pmatrix},
	\end{align*}
	and delays $$\tau_1(t)=1+\frac{1}{2}t\sin^2t,\ \tau_2(t)=\frac{1}{2}+\frac{3}{10}\cos^2t,\ \tau_3(t)=\frac{1}{2}+\frac{1}{2}t\cos^2t\;t\geq 0.$$
It is clear that 
$F:=\sup_{t\geq 0}f(t)=(0.03, 0.1)^T,\ G:=\sup_{t\geq 0}g(t)=(0.2, 0.6)^T.$
According to Theorem \ref{best}, 
of solutions to the system \eqref{ex3} is
	\begin{align*}
		x^*&=-[A+B+E(I-D)^{-1}C]^{-1}(F+E(I-D)^{-1}G)\approx(4.0214,\ 3.6315)^T, \\
		y^*&=(I-D)^{-1}(Cx^*+G)\approx(6.1899,\ 4.8341)^T.
	\end{align*}
In Figure 3, we give a numerical description of the orbits of solutions corresponding to various initial conditions and the smallest asymptotic bound of solutions to this system.
	\begin{figure}
		\begin{center}
		\includegraphics[scale=.6]{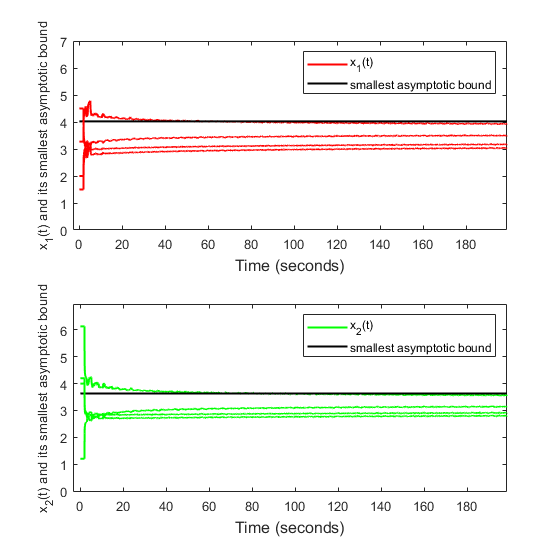}
		\includegraphics[scale=.6]{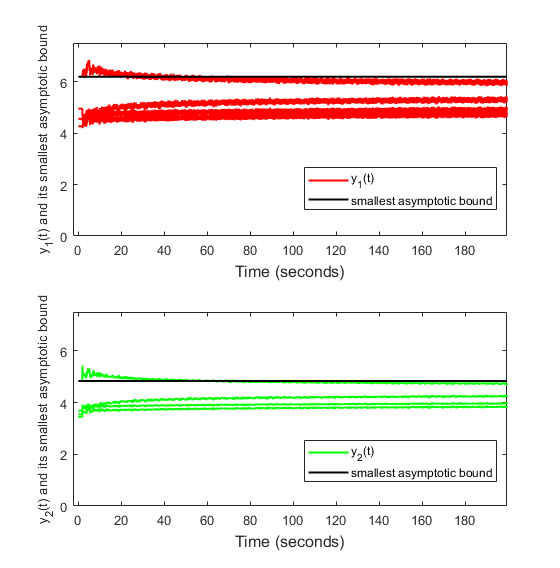}
	\end{center}
	\begin{center}
	\caption{Orbits of various solutions to the system \eqref{ex3} and its smallest asymptotic bound.}
	\end{center}
	\end{figure}
\end{example}
\section*{Acknowledgments}
The authors thank the anonymous referees for useful comments and suggestions which help in improving presentation of the paper. This research is supported by a grant from the Vietnam Academy of Science and Technology under the grant number {\bf CTTH00.03/23-24}. 


\begin{thebibliography}{1}
\bibitem{Benvenuti}
L. Benvenuti, L. Farina, and B.D.O. Anderson, The positive side of
filters: a summary. {\em IEEE Circ. Sys. Magazine.,} {\bf 1} (2001), no. 3, pp. 32--36.
%
\bibitem{Blanchini}
F. Blanchini, P. Colaneri, and M.E. Valcher, {Switched linear positive
systems.} {\em Foundations and Trends in Systems and Control}, {\bf 2} (2015), no. 2, pp. 101--273.
%
\bibitem{BS_22}
K.L. Bichitra and N.B. Swaroop, New global asymptotic stability conditions for a class of nonlinear time-varying fractional systems. {\em European Journal of Control}, {\bf 63} (2022), pp. 97--106.
%
\bibitem{Brat_13}
C. Briat, Robust stability and stabilization of uncertain linear positive systems via integral linear constraints: $\ell_1$-gain and $\ell_\infty$-gain characterization. {\em International Journal of Robust and Nonlinear Control}, {\bf 23} (2013), no. 17, pp. 1932--1954.
%
\bibitem{Brunner}
H. Brunner, {\em Volterra integral equations: An introduction to theory and applications.} Cambridge University Press, 2017.
%
\bibitem{Carson}
E. Carson and C. Cobelli, {\em Modelling Methodology for Physiology and
Medicine.} Academic Press, San Diego, 2001.
%
\bibitem{Cong_Tuan_17}
N.D. Cong and H.T. Tuan, Generation of nonlocal fractional dynamical systems by fractional differential equations. {\em Journal of Integral Equations and Applications}, {\bf 29} (2017), no. 4, pp. 585--608.
%
\bibitem{TuanC_20}
N.D. Cong, H.T. Tuan, and H. Trinh, On asymptotic properties of solutions to fractional differential equations. {\em Journal of Mathematical Analysis and Applications,} {\bf 484} (2020), 123759.
\bibitem{Coxson}
P.G. Coxson and H. Shapiro, Positive reachability and controllability
of positive systems. {\em Linear Algebra \& its Appl.,} {\bf 94} (1987), pp. 35--53.
%
\bibitem{Cui}
Y. Cui, J. Shen, W. Zhang, Z. Feng, and X. Gong, Positivity and stability analysis of homogeneous coupled differential-difference equations with time-varying delays. {\em IEEE Transactions on Automatic Control}, {\bf 67} (2022), no. 10, pp. 5493--5500.
%
\bibitem{Kai}
K.~Diethelm,
\newblock{\em The Analysis of Fractional Differential Equations: An Application-Oriented Exposition Using Differential Operators of Caputo Type.}
\newblock{ Lecture Notes in Mathematics,} {\bf 2004}.
\newblock{Springer-Verlag, Berlin, 2010.}
%
\bibitem{Ferreira}
R.A.C. Ferreira, Sign of the Solutions of Linear Fractional Differential Equations and Some Applications. {\em Vietnam J. Math.}, {\bf 51} (2023), pp. 451--461.
%
\bibitem{Feyzmahdavian}
H.R. Feyzmahdavian, T. Charalambous, and M. Johansson, Asymptotic stability and decay rates of homogeneous positive systems with bounded and unbounded delays. {\em SIAM J. Control Optim.,} {\bf 52} (2014), no. 4, pp. 2623--2650.
%
\bibitem{GCM_20}
J.A. Gallegos, N. Aguila-Camacho, and M. Duarte-Mermoud, Vector Lyapunov-like functions for multi-order fractional systems with multiple time-varying delays. {\em Commun. Nonlinear Sci. Numer. Simul.,} {\bf 83} (2020), no. 12, 105089.
%
\bibitem{Graf}
U. Graf, {\em Applied Laplace Transforms and z-Transforms for Scientists and Engineers}. Springer Basel AG, 2004.
%
\bibitem{Gu_06}
K. Gu and S.I. Niculescu, {\em Stability Analysis of Time-delay Systems: A Lyapunov Approach}. In: Loría, A., Lamnabhi-Lagarrigue, F., Panteley, E. (eds) Advanced Topics in Control Systems Theory. Lecture Notes in Control and Information Science, {\bf 328}. Springer, London. 
%
\bibitem{Gu_10}
K. Gu, Stability problem of systems with multiple delay channels.
{\em Automatica}, {\bf 46} (2010), no. 4, pp. 743--751.
%
\bibitem{Haddad_04}
W.M. Haddad and V. Chellaboina, Stability theory for nonnegative and compartmental dynamical systems with time delay. {\em Systems \& Control Letters}, {\bf 51} (2004), no. 5, pp. 355--361.
\bibitem{Haddad}
W.M. Haddad, V. Chellaboina, and Q. Hui, {\em Nonnegative and Compartmental Dynamical Systems.} Princeton University Press, Princeton, New Jersey, 2010.
%
\bibitem{Hale}
J.K. Hale and S.M.V. Lunel, {\em Introduction to Functional Differential Equations}. Springer Verlag, Berlin, 1993.
%
\bibitem{Vargas}
E. Hernandez-Vargas, R. Middleton, P. Colaneri, and F. Blanchini,
Discrete-time control for switched positive systems with application
to mitigating viral escape. {\em  International Journal of Robust and Nonlinear Control,} {\bf 21} (2011), pp. 1093--1111.
%
\bibitem{Mahudov}
I.T. Huseynov and N.I. Mahmudov,  Analysis of positive fractional-order neutral time-delay systems. {\em Journal of the Franklin Institute}, {\bf 359} (2022), no. 1,  pp. 294--330.
%
\bibitem{Jia}
J. Jia, F. Wang, and Z. Zeng, Global stabilization of fractional-order memristor-based neural networks with incommensurate orders and multiple time-varying delays: a positive-system-based approach. {\em Nonlinear Dyn.}, {\bf 104} (2021), pp. 2303--2329.
%
\bibitem{Kuang}
Y. Kuang, {\em Delay differential equations with applications in population dynamics}, vol. 191 of Mathematics in Science and Engineering. Academic Press, Inc., Boston, Massachusetts, 1993.
%
\bibitem{Kubica}
A. Kubica and K. Ryszewska, Fractional diffusion equation with distributed-order Caputo derivative. {\em Journal of Integral Equations Applications}, {\bf 31}  (2019), no. 2, pp. 195--243.
%
\bibitem{Lakshmikantham}
V. Lakshmikantham, S. Leela, and J.V Vasundhara Devi, {\em Theory of Fractional Dynamic Systems}. Cambridge Academic Publishers, Cambridge, 2009.
%
\bibitem{Luenberger}
D.G. Luenberger, {\em Introduction to Dynamical Systems}. J.Wiley \& Sons Inc., 1979.
%
	%
	%
	%
	\bibitem{Mazenc}
	F. Mazenc, H. Ito, and P. Pepe, Construction of Lyapunov functionals for coupled differential and continuous time difference equations. {\em 52nd IEEE Conference on Decision and Control}, 2013.
	%
\bibitem{Michiels}
W. Michiels and S.I. Niculescu, {\em Stability and Stabilization of Time-Delay Systems: An Eigenvalue-Based Approach.} Advances in Design and Control,  {\bf 12}. Society for Industrial and Applied Mathematics (SIAM), Philadelphia, Pennsylvania, 2007.	
%
\bibitem{Moreno}
Y. Moreno, R. Pastor-Satorras, and A. Vespignani, Epidemic outbreaks
in complex heterogeneous networks. {\em The European Physical J. B:
Condensed Matter and Complex Systems}, {\bf 26} (2002), no. 4, pp. 521--529.
%
\bibitem{Niculescu}
S.I. Niculescu, {\em Delay effects on stability: a robust control approach.} Springer, Berlin, 2001.
%
\bibitem{Nieuwenhuis}
J.W. Nieuwenhuis, Some results about a Leontieff-type model. {\em In
C.I.Byrnes and Lindquist A, editors, Frequency domain and State
space methods for Linear Systems}, pp. 213--225. Elsevier Science, 1986.
%
\bibitem{Ngoc_09}
P.H.A. Ngoc, On positivity and stability of  linear Volterra systems with  delay. {\em SIAM J. Control Optim.}, {\bf 48} (2009), no. 3,   pp. 1939--1960.
%
%
\bibitem{Hieu_18}
P.N. Pathirana, P.T. Nam, and H. Trinh, {Stability of positive coupled differential-difference equations with unbounded time-varying delays.} {\em Automatica}, {\bf 92} (2018), pp. 259--263.
%
\bibitem{Podlubny}
I. Podlubny, {\em Fractional Differential Equations: An Introduction to Fractional
	Derivatives, Fractional Differential Equations, to Methods of Their
	Solution and Some of Their Applications.} Mathematics in Science and
Engineering, {\bf 198}. San Diego, CA, USA: Academic Press, 1999.
%
\bibitem{Rantzer}
A. Rantzer and M. E. Valcher, {\em A Tutorial on Positive Systems and Large Scale Control.} 2018 IEEE Conference on Decision and Control (CDC), 2018, pp. 3686-3697, doi: 10.1109/CDC.2018.8618689.
%
\bibitem{Rasvan_02}
V. Rasvan and S.I. Niculescu, Oscillations in lossless propagation models: A Liapunov-Krasovskii approach. {\em IMA Journal of Mathematical
Control and Information}, {\bf 19} (2002), pp. 157--172.
%
\bibitem{Rasvan_06}
V. Rasvan, {\em Functional Differential Equations of Lossless Propagation and Almost Linear Behavior}, Plenary Lecture, C. Manes \& P. Pepe (Eds), Proceedings of the 6th IFAC Workshop on Time-Delay Systems, L’Aquila, Italy, 2006, {\em IFAC-PapersOnline}, Vol. 6, Part 1.
%
\bibitem{ShenLam15}
J. Shen and J. Lam, $\ell_\infty/\ell_\infty$-gain analysis for positive linear systems with unbounded time-varying delays. {\em IEEE Transactions on Automatic Control}, {\bf 60} (2015), no. 3, pp. 857--862.
%
\bibitem{Shen_15}
J. Shen and W.X. Zheng, {Positivity and stability of coupled differential–difference equations with time-varying delays.} {\em Automatica}, {\bf 57} (2015), pp. 123--127.
%
\bibitem{ShenLam_16}
J. Shen and J. Lam, Stability and performance analysis for positive fractional-order systems with time-varying delays. {\em IEEE Trans. Automat. Control.,} {\bf 61} (2016), no. 9, pp. 2676--2681.
%
\bibitem{Tuan_SIAM}
H.T. Tuan and H. Trinh, A qualitative theory of time delay nonlinear fractional-order systems. {\em SIAM J. Control Optim.,} {\bf 58}, no. 3, pp. 1491--1518.
%
\bibitem{TTL_21}
H.T. Tuan, H. Trinh, and J. Lam, Positivity and stability of mixed
fractional-order systems with unbounded delays: Necessary and sufficient conditions. {\em International Journal of Robust and Nonlinear Control}, {\bf 31}, no. 1, pp. 37--50.
%
\bibitem{Tuan_22}
H.T. Tuan, Smallest asymptotic bound of solutions to positive mixed fractional-order inhomogeneous linear systems with time-varying delays. {\em Journal of the Franklin Institute}, 359 (2022), no. 8, pp. 3768--3778.
	%
	\bibitem{Vainikko_16}
G. Vainikko, Which functions are fractionally differentiable?, {\em Z. Anal. Anwend.,} {\bf 35} (2016), no. 4, pp. 465--487.
\bibitem{Vecchio}
D. Del Vecchio and R. M. Murray, {\em Biomolecular Feedback Systems.}
Princeton University Press, Princeton, New Jersey, 2014.
%
\bibitem{Huang_21}
Q. Xiao, Z. Zeng, T. Huang, and Frank L. Lewis, Positivity and stability of coupled differential–difference equations with time-varying delay on time scales. {\em Automatica}, {\bf 131} (2021), 109774.
\end{thebibliography}
\end{document}